\documentclass[11pt]{amsart}
\usepackage[latin1]{inputenc}
\usepackage{amssymb}
\usepackage{enumerate}
\usepackage[colorlinks, bookmarks=true]{hyperref}
\usepackage{cleveref}
\usepackage{mathtools}

\usepackage[T1]{fontenc}
\usepackage{mathptmx}

\usepackage[colorlinks, bookmarks=true]{hyperref}

\newtheorem{theorem}{Theorem}[section]
\newtheorem{proposition}[theorem]{Proposition}
\newtheorem{lemma}[theorem]{Lemma}
\newtheorem{corollary}[theorem]{Corollary}
\theoremstyle{definition}
\newtheorem{remark}[theorem]{Remark}

\newtheorem{example}[theorem]{Example}

\numberwithin{equation}{section}

\newcommand{\unit}{\mathbf{1}}

\def\J#1#2#3{ \left\{ #1,#2,#3 \right\} }

\def\11{\textbf{$1$}}

\newcommand{\set}[2]{\ensuremath{\{ #1\,:\; #2\}}}
\newcommand{\nor}[1]{\ensuremath{\|#1\|}}
\newcommand{\e}{\ensuremath{\varepsilon}}

\begin{document}
	
\title[Quadratic \& additive mappings on operator commuting elements]{Quadratic \& additive mappings on operator commuting elements in JBW$^*$-algebras}

\author[G.M. Escolano]{Gerardo M. Escolano}

\address{Departamento de An{\'a}lisis Matem{\'a}tico, Facultad de Ciencias, Universidad de Granada \hyphenation{Gra-nada}, 18071 Granada, Spain.}
\email{gemares@correo.ugr.es}

\author[J. Hamhalter]{Jan Hamhalter}

\address{Department of Mathematics, Faculty of Electrical Engineering, Czech Technical University in Prague, Technicka 2, 166 27, Prague 6, Prague, Czech Republic.}
\email{hamhalte@math.feld.cvut.cz}

\author[A.M. Peralta]{Antonio M. Peralta}

\address{Departamento de An{\'a}lisis Matem{\'a}tico, Facultad de Ciencias, Universidad de Granada, 18071 Granada, Spain.
Instituto de Matem{\'a}ticas de la Universidad de Granada (IMAG).}
\email{aperalta@ugr.es}

\author[A.R. Villena]{Armando R. Villena}

\address{ Departamento de An{\'a}lisis Matem{\'a}tico, Facultad de Ciencias, Universidad de Granada, 18071 Granada, Spain.
Instituto de Matem{\'a}ticas de la Universidad de Granada (IMAG).}
\email{avillena@ugr.es}

\subjclass[2010]{Primary 46L05; 46L10; 46H05; 17C65}

\keywords{Jordan--Banach algebra; unitaries; preservers of products of commuting elements; piecewise Jordan homomorphisms}

\begin{abstract} Let $\mathfrak{A}$ and $\mathfrak{B}$ be JBW$^*$-algebras whose sets of unitaries are denoted by  $\mathcal{U}(\mathfrak{A})$ and $\mathcal{U}(\mathfrak{B})$, respectively. We show that $\mathcal{U}(\mathfrak{A})$ is closed for Jordan products of operator commuting pairs inside itself. Assuming that  $\mathfrak{A}$ and $\mathfrak{B}$ are JBW$^*$-algebras without direct summands of type $I_1$ or $I_2$, we prove that for each bicontinuous bijection $\Phi : \mathcal{U}(\mathfrak{A}) \rightarrow \mathcal{U}(\mathfrak{B})$ satisfying $\Phi (u \circ v) = \Phi (u)\circ \Phi (v),$ whenever $u$ and $v$ are operator commuting unitaries in $\mathfrak{A}$,  there exist a linear Jordan $^*$-isomorphism $\theta: \mathfrak{A} \rightarrow \mathfrak{B}$, a real linear mapping $\beta: \mathfrak{A_{sa}}\rightarrow Z(\mathfrak{B}_{sa})$, and an invertible central element $c \in \mathfrak{B}_{sa}$ such that $$ \Phi(e^{i a}) = e^{i \beta (a)}\circ e^{i  c\circ\theta(a)} = e^{i \beta(a)} \circ \theta \left( e^{i  \theta^{-1}( c )\circ a}\right),$$  for all $a\in \mathfrak{A}_{sa}$. The conclusion improves when $\mathfrak{A}$ is a JBW$^*$-algebra factor not of type $I_2$.

Assuming that $\mathfrak{A}$ has no direct summands of type $I_2$, we show that any mapping $\Phi : \mathfrak{A}_{sa} \to \mathfrak{B}_{sa}$
satisfying the following conditions:
\begin{enumerate}[(a)]
	\item $\Phi\big(U_a(b)\big) = U_{\Phi(a)}\big(\Phi(b)\big)$ whenever $a$ and $b$ operator commute in $\mathfrak{A}_{sa}$,
	\item $\Phi$ is additive on operator commuting elements,
\end{enumerate}
enjoys the following properties: First, $\Phi(\unit)$ is a tripotent in $\mathfrak{B}_{sa}$, the Peirce-2 subspace 
$\mathfrak{B}_2\!\left(\Phi(\unit)\right)$ carries a natural structure of JBW$^*$-algebra whose self-adjoint part contains the range of $\Phi$, and makes the corresponding mapping
$\Phi : \mathfrak{A}_{sa} \to \mathfrak{B}_2\!\left(\Phi(\unit)\right)_{sa}$ a unital (real linear) Jordan homomorphism. Furthermore, if $\Phi$ is bijective, then $\Phi(\unit)$ is a central symmetry in $\mathfrak{B}$, and 
$\Phi$ is an isometric real-linear Jordan isomorphism from $\mathfrak{A}_{sa}$ onto 
$\mathfrak{B}_2\!\left(\Phi(\unit)\right)_{sa}$.
\end{abstract}

\maketitle

\section{Introduction}

In quantum theory, observables are represented by self-adjoint operators on a Hilbert space $ H $, or more generally by the self-adjoint part of a von Neumann algebra. Simultaneous measurability is commonly regarded as equivalent to the commutativity of observables; nevertheless, several authors have argued that even nowhere-commuting observables can be measured simultaneously \cite{Ozawa2011}. 
The sum of two non-commuting self-adjoint operators defines a legitimate observable which satisfies uncertainty relations with each of the original ones. From this perspective, the addition of observables acquires an unambiguous physical meaning only when the involved operators commute. Multiplication of observables has an even greater difficulty, since the self-adjoint part of such a von Neumann algebra need not be, in general, closed under the underlying associative product. For this reason, these considerations have led to the concept of \emph{piecewise structures}, introduced for example by C. Heunen and M. L. Reyes in \cite{HeunenReyes2014}, where the basic idea is to formalize the relation of commutativity and consider algebraic operations only for pairs of elements lying in this relation. In this paper we shall consider the weaker variant of piecewise closed subsets of a general algebra. Suppose $\mathfrak{A}$ is a (not necessarily associative) real or complex algebra whose product is denoted by ``$a\bullet b$''. A \emph{piecewise closed} subset of $\mathfrak{A}$ is a subset $\mathcal{N}\subseteq \mathfrak{A}$ together with a commeasurability relation (i.e., a reflexive and symmetric binary relation) $\Sigma\subseteq \mathcal{N}\times \mathcal{N}$ such that $a\bullet b\in \mathcal{N}$ for all $(a,b)\in \Sigma$.\smallskip 

Examples of piecewise closed subsets of a unital C$^*$-algebra $A$ include the sets Proj$(A)$, $\mathcal{N}(A)$, $A_{sa}$, $\mathcal{U} (A)$,  $A^{+},$ and $A^{++}$, of all projections, normal elements, self-adjoint elements, unitaries, positive elements, and positive invertible elements, respectively, where commeasurability is given by commutativity.\smallskip

Let $(\mathcal{N}_1, \Sigma_1)$ and $(\mathcal{N}_2, \Sigma_2)$ be two piecewise closed subsets of a real or complex algebra $(\mathfrak{A},\bullet)$. A \emph{piecewise homomorphism} from $(\mathcal{N}_1, \Sigma_1)$ to $(\mathcal{N}_2, \Sigma_2)$ is a mapping $\Phi : \mathcal{N}_1\to \mathcal{N}_2$ satisfying the following properties: \begin{enumerate}
	\item $(a,b)\in \Sigma_1$ $\Rightarrow (\Phi (a),\Phi(b))\in \Sigma_2$;
	\item $\Phi(a\bullet  b) = \Phi (a)\bullet \Phi(b)$, for all $(a,b)\in \Sigma_1$. 
\end{enumerate} If $\Phi$ is a bijection such that $\Phi$ and $\Phi^{-1}$ are both piecewise homomorphisms, then $\Phi$ is called a \emph{piecewise isomorphism}.\smallskip

Some recent accomplishments in the study of piecewise isomorphisms include the following results: Let ${M}$ and $N$ be von Neumann algebras without direct summands of type $I_1$ or $I_2$. Then, for each bicontinuous piecewise isomorphism \(\Phi : {M}^{++} \to {N}^{++}\), there exists a Jordan $^*$-isomorphism \(\theta : {M} \to {N}\), a bounded linear mapping \(\psi\) from \({M}_{\mathrm{sa}}\) into the self-adjoint part of the centre of $N$, and a non-zero central self-adjoint element  \(c\) in \({M}\) such that $\Phi(a) = e^{\psi(\log a)}\, \theta(a^{c}),$ for all $a \in \mathcal{M}^{++}$ \cite[Theorem 4.3]{Hamhalter2023}. In case that $\Phi : \mathcal{U}({M}) \to \mathcal{U}({N})$ is a bicontinuous piecewise isomorphism, then there exist a non-zero central self-adjoint element $z \in {N}$, a Jordan $^*$-isomorphism $\theta : {M} \to {N}$, and a bounded linear mapping $\psi$ from ${M}_{sa}$ to the self-adjoint part of centre of $N$ such that $\Phi(e^{ita}) = e^{it\psi(a)} e^{izt\theta(a)},$ for all $t \in \mathbb{R},$ $a \in {M}_{sa}$ \cite[Theorem 5.2]{Hamhalter2023}. Consequently, in the case of von Neumann algebras without direct summands of type $I_1$ or $I_2$, the topological groups of positive invertible elements and of unitaries are complete Jordan invariants.\smallskip

As Alfsen and Shultz remarked in the introduction to Chapter~6 of \cite{AlfsenShultz2003}, it is more natural to model quantum mechanics on Jordan algebras rather than on associative algebras; consequently, it is natural to investigate piecewise structures within the framework of Jordan algebras. In the setting of JBW$^*$-algebras, a class formed by not necessarily associative algebras containing all von Neumann algebras in a natural way (see section~\ref{sec:2} below for definitions), we can replace ``commutativity'' with the natural, and well-studied, notion of ``operator commutativity'' and consider piecewise Jordan homomorphisms and isomorphisms in a similar way to that considered above. A very recent study by the second author of this paper assures that for each bicontinuous piecewise Jordan isomorphism $\Phi : \mathfrak{A}^+ \to \mathfrak{B}^+$, where $(\mathfrak{A},\circ)$ and $(\mathfrak{B},\circ)$ are JBW$^*$-algebras admitting no direct summands of type $I_1$ or $I_2$, there exist a Jordan $^*$-isomorphism $\theta : \mathfrak{A} \to \mathfrak{B}$, an invertible central element $c \in \mathfrak{A}$, and a bounded linear mapping $\psi$ from $\mathfrak{A}_{sa}$ to the centre of $\mathfrak{B}$ ($Z(\mathfrak{B})$ in short) such that the identity
\begin{equation*}
\Phi(a) = \exp(\psi(\log a)) \circ \theta(a^c),
\end{equation*} holds for all $a \in \mathfrak{A}^{++}$ \cite[Theorem 3.2]{Hamhalter2025}. It is natural to ask what can be said in the case of piecewise isomorphisms between the sets of unitary elements of two JBW$^*$-algebras admitting no direct summands of type $I_1$ or $I_2$.\smallskip

One of the goals of this paper is to complete the study on piecewise Jordan isomorphisms between unitary sets of two JBW$^*$-algebras. An element $u$ in a unital JB$^*$-algebra is called unitary if it is invertible (in the Jordan sense) with inverse $u^*$. Contrary to the fact that the associative product of two unitaries in a unital C$^*$-algebra is a unitary, the Jordan product of two unitaries is not, in general, a unitary. It is not so obvious that the Jordan product of two operator commuting unitaries is again a unitary.  In section~\ref{sec:2} we revisit the basic results on operator commuting elements in JB$^*$-algebras with the aim of showing that the set, $\mathcal{U}(\mathfrak{A}),$ of all unitary elements in any JBW$^*$-algebra $\mathfrak{A}$ (i.e. a JB$^*$-algebra which is also a dual Banach space) is a piecewise closed subset (cf. Lemma~\ref{lemma_op_comm_unit}).\smallskip

In the main result of section~\ref{sec:3} we show that whenever $\mathfrak{A}$ and $\mathfrak{B}$ are JBW$^*$-algebra without direct summands of type $I_1$ or $I_2$, then for each bijective bicontinuous piecewise Jordan isomorphism $\Phi : \mathcal{U}(\mathfrak{A}) \rightarrow \mathcal{U}(\mathfrak{B})$, there exist a linear Jordan $^*$-isomorphism $\theta: \mathfrak{A} \rightarrow \mathfrak{B}$, a real linear mapping $\beta: \mathfrak{A_{sa}}\rightarrow Z(\mathfrak{B}_{sa})$, and an invertible central element $c \in \mathfrak{B}_{sa}$ such that $$ \Phi(e^{i a}) = e^{i \beta (a)}\circ e^{i  c\circ\theta(a)} = e^{i \beta(a)} \circ \theta \left( e^{i  \theta^{-1}( c )\circ a}\right),$$  for all $a\in \mathfrak{A}_{sa}$. Consequently, under these conditions, $\mathfrak{A}$ and $\mathfrak{B}$ are Jordan $^*$-isomorphic (see Theorem~\ref{theo: 4.3}). The proof of this result relies on the recent study on linear bijections preserving operator commutativity in both directions between JBW$^*$-algebras admitting no central summands of type $I_1$ and $I_2$ presented in \cite{EscolanoPeraltaVillena2025_OpCommut}, and the even more recent Bunce-Wright-Mackey-Gleason theorem for finitely-additive bounded vector-measures on the lattice of projections of a JBW$^*$-algebras without type $I_2$ direct summands established in  \cite{EscolanoPeraltaVillena2025_mes}. The latter result is employed in Theorem~\ref{t corollary of the Mackey-Gleason} to establish the following consequence, which appears to be of independent interest. Let $f: \mathfrak{A}_{sa}\to X$ be a homogeneous mapping which is additive on pairs of operator commuting elements in $\mathfrak{A}$, where $\mathfrak{A}$ is a JBW$^*$-algebra without direct summands of type $I_2$, and $X$ is a real normed space. Then the following statements are equivalent:
\begin{enumerate}[$(a)$]
	\item $f$ is linear and continuous;
	\item $f$ is bounded on the closed unit ball of $\mathfrak{A}_{sa}$. 
\end{enumerate}        
Assuming that $\mathfrak{A}$ is a JBW$^*$-algebra factor not of type $I_2$, we prove in Theorem~\ref{T2} that a bicontinuous bijection $\Phi$ from $\mathfrak{A}$ to another JBW$^*$-algebra $\mathfrak{B}$ is a piecewise Jordan isomorphism if, and only if, there is a Jordan $^*$-isomorphism $\theta: \mathfrak{A}\to \mathfrak{B}$ such that one of the following conditions holds for all $u$ in $\mathcal{U}(\mathfrak{A})$: 
\begin{enumerate}[$(a)$]
	\item  $\Phi(u)=\theta(u)$, for all $u \in \mathcal{U}(\mathfrak{A})$.
	\item  $\Phi(u)=\theta{(u^{-1})}$, for all $u \in \mathcal{U}(\mathfrak{A})$.
\end{enumerate}

Section~\ref{sec: 4 quadratic maps} is devoted to connect our study with previous investigations on quadratic maps between JBW$^*$-algebras due to Y. Friedman and J. Hakeda (see \cite{FriHak88}). To recall the required notions, let us simply note that for each pair of elements $a,b$ in a JB$^*$-algebra $\mathfrak{A}$, the operator $U_{a,b}:\mathfrak{A}\to\mathfrak{A}$ is defined by $U_{a,b} (x) = (a\circ b)\circ c +  (c\circ b)\circ a- (a\circ c)\circ b$. We simply write $U_{a}$ for $U_{a,a}$. If a C$^*$-algebra $A$ is regarded as a JB$^*$-algebra with respect to its natural Jordan product, $a\circ b = \frac12 (a b + b a )$, we have $U_{a,b} (c) = \frac12 ( a c b + b c a )$ ($a,b,c\in A$).  Every JB$^*$-algebra can be regarded as a JB$^*$-triple in the sense of \cite{Kaup83} with respect to the triple product $\{a,c,b\}= U_{a,b} (c^*)$ ($a,b,c\in \mathfrak{A}$). Let $\mathfrak{A}$ and $\mathfrak{B}$ be JBW$^*$-algebras such that $\mathfrak{A}$ admits no direct summands of type $I_1$. It follows from a result by Friedman and Hakeda (cf. \cite[Proposition 2.14, Theorem 2.15]{FriHak88}) that every bijection $\Phi: \mathfrak{A} \to \mathfrak{B}$ satisfying 
$$\Phi(\{a,b,a\})\!=\Phi\left(U_a(b^*)\right) =U_{\Phi(a)}(\Phi(b)^*))= \{\Phi(a),\Phi(b),\Phi(a)\},$$ for all $a, b \in \mathfrak{A}$ is a real linear isometry. \smallskip

In Theorem~\ref{t FriedmanHakeda quadratic and additive on oc elements} we prove the following variant of the just commented result: Let $\mathfrak{A}$ and $\mathfrak{B}$ be JBW$^*$-algebras such that $\mathfrak{A}$ admits no direct summands of type $I_2$. Suppose $\Phi : \mathfrak{A}_{sa}\to \mathfrak{B}_{sa}$ is a mapping satisfying the following hypothesis: 
\begin{enumerate}[$(a)$]
\item $\Phi$ preserves quadratic products on operator commuting elements, equivalently, $\Phi\left(U_a(b)\right) =U_{\Phi(a)}(\Phi(b))$ whenever $a$ and $b$ are operator commuting elements in $\mathfrak{A}_{sa}$. 
\item  $\Phi $ is additive on operator commuting elements, that is, $\Phi(a+b) = \Phi(a) + \Phi(b),$ whenever $a$ and $b$ are operator commuting elements in $\mathfrak{A}_{sa}$.
\end{enumerate}
Then, $\Phi(\unit)$ is a tripotent in $\mathfrak{B}_{sa}$, the Peirce-$2$ subspace $\mathfrak{B}_2 (\Phi(\unit))$ is naturally equipped with a structure of JBW$^*$-algebra whose self adjoint part contains the image of $\Phi$, and makes the mapping $\Phi: \mathfrak{A}_{sa} \to \mathfrak{B}_2 (\Phi(\unit))_{sa}$ a unital (real linear) Jordan homomorphism. Furthermore, if $\Phi$ is bijective, we can conclude that $\Phi (\unit)$ is a central symmetry in $\mathfrak{B}$ and $\Phi$ is an isometric real linear Jordan isomorphism from $\mathfrak{A}_{sa}$ to $\mathfrak{B}_2 (\Phi(\unit))_{sa}$.\smallskip

Finally, in Example~\ref{couterexample spin} we show that the hypothesis concerning type $I_2$ summands cannot be relaxed in Theorems~\ref{t corollary of the Mackey-Gleason} and \ref{t FriedmanHakeda quadratic and additive on oc elements}.

\section{Preservers of Jordan products on piecewise closed subsets of JB\texorpdfstring{$^*$}{*}-algebras}\label{sec:2}

JB$^*$-algebras, whose definition was foreseen by Kaplansky in 1976, are precisely the Jordan Banach $^*$-algebras that play for Jordan structures the role that C$^*$-algebras play for associative operator algebras. These algebras provide an appropriate non-associative framework in which one can do functional analysis, spectral theory, and state-space geometry in a way strongly parallel to the C$^*$-algebraic setting. A \emph{JB$^*$-algebra} is a complex Banach space $\mathfrak{A}$ together with a bilinear and commutative product (denoted by $a\circ b$) and an algebra involution ($a\mapsto a^*$) satisfying the following axioms: 
\begin{enumerate}[$(1)$]
	\item $(a\circ b)\circ b^2  = (a\circ b^2)\circ b\ $ \ \ ($\forall a,b\in \mathfrak{A}$) \ \ (\emph{Jordan identity}),
	\item $\left\| U_{a} (a^*) \right\| = \|a\|^3$ \ \ ($\forall a\in \mathfrak{A}$), where $U_a (b) = 2 (a\circ b)\circ a - a^2 \circ b$ \ \ ($\forall a,b\in \mathfrak{A}$). 
\end{enumerate} This definition is equivalent to say that the self-adjoint part $\mathfrak{A}$ (which is denoted by $\mathfrak{A}_{sa}$) is a real JB-algebra in the usual sense (see \cite{HOS,Wright1977}). The basic theory of JB- and JB$^*$-algebras can be consulted in \cite{AlfsenShultz2003, HOS, Cabrera, Wright1977}, and the introduction of the recent papers \cite{EscolanoPeraltaVillena2025_mes,Hamhalter2025}.\smallskip 

We briefly recall here that a JB$^*$-algebra is called unital if there exists an element $\mathbf{1}\in \mathfrak{A}$ such that $\mathbf{1}\circ a = a\circ \mathbf{1} = a,$ for all $a\in \mathfrak{A}$. Every C$^*$-algebra is a JB$^*$-algebra if we consider the Jordan product given by $a\circ b = \frac12 (a b + b a)$. However, the class of JB$^*$-algebras is strictly larger than the class of C$^*$-algebras. A JBW$^*$-algebra is a JB$^*$-algebra which is also a dual Banach space.  The second dual, $\mathfrak{A}^{**}$, of a JB$^*$-algebra $\mathfrak{A}$ is a JBW$^*$-algebra containing the original structure as a JB$^*$-subalgebra \cite[4.4.3 Theorem 4.4.3]{HOS}, and each JBW$^*$-algebra is unital, admits a unique isometric predual and its product is separately weak$^*$-continuous \cite[Corollary 4.1.6, Lemma 4.1.7,  and Theorem 4.4.16]{HOS}.\smallskip

On every JB$^*$-algebra $\mathfrak{A}$ we can consider the triple product defined by $\{a,b,c\} = (a\circ b^*)\circ c + (c\circ b^*)\circ a - (a\circ c)\circ b^*$. This triple product induces a structure of JB$^*$-triple on $\mathfrak{A}$ (cf. \cite{Kaup83}). In case that a C$^*$-algebra $A$ is regarded with its natural Jordan product, the derived triple product satisfies $\{a,b,c\} = \frac12 (a b^* c + c b^* a)$ for all $a,b,c\in A$.\smallskip

As commented in the introduction, in the seeking of algebraic invariants of C$^*$-algebras authors considered special subsets that, despite not being subspaces, nor closed under products, contain the products of certain pairs of their elements. We begin by observing how certain subsets of unital JB$^*$-algebras are piecewise closed for the relation of commeasurability given by operator commutativity.\smallskip

The product of a Jordan algebra $\mathfrak{A}$ is, by definition, commutative, so every pair of elements in $\mathfrak{A}$ commute if we only employ the ``usual'' sense. However, if we assume that an associative algebra $A$ is equipped with its natural Jordan product $a\circ b = \frac12 ( ab + ba )$, it can be easily checked that if $a,b\in A$ commute with respect to the associative product (i.e., $ a b = b a$), then the corresponding Jordan multiplication operators $M_a (x) = a\circ x$ and $M_b(x)= b\circ x$ ($x\in \mathfrak{A}$) commute. According to the standard sources, elements $a$ and $b$ in a Jordan algebra $\mathfrak{A}$ are said to \emph{operator commute} if the operators $M_a, M_b$ commute (i.e., $(a\circ c)\circ b = a \circ (c\circ b)$ for every $c \in \mathfrak{A}$). For example, the Jordan identity is equivalent to say that every element $a$ in a JB$^*$-algebra  operator commute with its square. The general lacking of associativity motivates the use of the \emph{associator} of three elements $a,b,c \in \mathfrak{A}$ defined by $[a,c,b] := [M_a, M_b](c) =  (a\circ c) \circ b - a \circ (c\circ b)$. Clearly, $a$ and $b$ operator commute in $\mathfrak{A}$ if, and only if, $[a,\mathfrak{A},b]=0.$ \smallskip

Contrary to what one might expect naturally, in a general Jordan algebra, the statements ``$a$ and $b$ operator commute'' and ``$a$ and $b$ generate a commutative and associative algebra'' are completely independent (cf. \cite[2.5.1 and Example 2.5.2]{HOS}). It is known (see \cite[Theorem]{vandeWet2020}) that in the case that $a$ and $b$ are self-adjoint elements in a JB$^*$-algebra $\mathfrak{A}$ the following statements are equivalent: 
\begin{enumerate}[$(a)$]
	\item $a$ and $b$ operator commute.
	\item $a$ and $b$ generate an associative JB$^*$-algebra, equivalently, a commutative C$^*$-algebra.
	\item $a$ and $b$ generate an associative JB$^*$-algebra of mutually operator commuting elements.
\end{enumerate}  Furthermore, in such a case $a$ and $b$ commute in the usual sense as elements in any C$^*$-algebra containing the JB$^*$-subalgebra of $\mathfrak{A}$ generated by $a$ and $b$ as a Jordan self-adjoint subalgebra \cite[Proposition 1]{Topping65}.\smallskip

The \emph{centre} of a JB$^*$-algebra $\mathfrak{A}$ is formed by the elements $z\in \mathfrak{A}$ that operator commute with every element in $\mathfrak{A}$. The symbol $Z(\mathfrak{A})$ will stand for the centre of $\mathfrak{A}$, and its elements are called central. The centre of a JB$^*$-algebra $\mathfrak{A}$ is a commutative C$^*$-algebra, and contains the identity of $\mathfrak{A}$ in case that the latter exists (see \cite[Proposition 1.52]{AlfsenShultz2003}. The centre of a JBW$^*$-algebra is a commutative von Neumann (see \cite[Proposition 2.36]{AlfsenShultz2003}, \cite{Edwards80}). If $Z(\mathfrak{A}) = \mathbb{C}\  \unit$ we say that $\mathfrak{A}$ is JBW$^*$-factor.\smallskip 
    
An element $a$ in a unital JB$^*$-algebra $\mathfrak{A}$ is called \emph{invertible} if there exists a (unique) $b \in \mathfrak{A}$ satisfying $a\circ b = \unit$ and $a^2\circ b = a$. The element $b$ is known as the inverse of $a$, and is denoted by $a^{-1}$ (cf. \cite[3.2.9]{HOS} or \cite[\S 4.1.1]{Cabrera}). The (\emph{Jordan}-)\emph{spectrum} of an element $a$ in $\mathfrak{A}$ (denoted by J-$\sigma(a)$) is the set of all $\lambda\in \mathbb{C}$ for which $a-\lambda \textbf{1}$ is not invertible. As in the setting of associative Banach algebras, J-$\sigma(a)$ is a non-empty compact subset of the complex plane (see \cite[Theorem 4.1.17]{Cabrera}. \smallskip


We can now present some examples of piecewise closed subsets of JB$^*$-algebras. An element $p$ in a JB$^*$-algebra $\mathfrak{A}$ is called a \emph{projection} if $p=p^*= p^2$. We shall write Proj$(\mathfrak{A})$ for the set of all projections in $\mathfrak{A}$. An element $a\in \mathfrak{A}$ is said to be \emph{positive} ($a\geq 0$ in short) if $a^* = a$ and its (Jordan) spectrum in contained in $\mathbb{R}_0^+$, equivalently, $a$ is the square of a hermitian element (see \cite[3.3.3]{HOS} and \cite[\S 3.1.27]{Cabrera}). The symbols $\mathfrak{A}^+$ and $\mathfrak{A}^{++} = \mathfrak{A}_{sa}^{++}$ will stand for the sets of all positive elements and all positive and invertible elements in $\mathfrak{A}$, respectively. 
It naturally follows from the above properties that for each unital JB$^*$-algebra $\mathfrak{A}$, the sets Proj$(\mathfrak{A}),$ $\mathfrak{A}^{+},$ and $\mathfrak{A}^{++}$ are piecewise closed subsets of $\mathfrak{A}$.
\smallskip

Along this note, piecewise closed subsets of JB- and JB$^*$-algebras are also called \emph{piecewise Jordan closed subsets}, while piecewise homomorphisms will be also called \emph{piecewise Jordan homomorphisms}.\smallskip

We say that two projections $p,q\in \mathfrak{A}$ are \emph{orthogonal} ($p\perp q$ in short) if $p\circ q =0$. It is known that $p$ and $q$ operator commute if they are orthogonal (cf. \cite[Theorem]{vandeWet2020} or \cite[Lemma 2.5.5]{HOS}).\smallskip 

The JB$^*$-subalgebra generated by a single hermitian element $a$  in a JB$^*$-algebra $\mathfrak{A}$ is isometrically Jordan $^*$-isomorphic to the commutative C$^*$-algebra of all continuous complex-valued functions on the (Jordan-)spectrum of $a$, and $a$ can be identified with the inclusion of its spectrum in $\mathbb{C}$ (cf. \cite[\S 3.2]{HOS}, \cite[Corollary 1.19 and Definition 1.20]{AlfsenShultz2003} or \cite[Corollary 4.1.72 and Corollary 4.1.74]{Cabrera}). We can therefore define a continuous functional calculus at the element $a$. It will be employed along this note without explicit mention. \smallskip  

Let $A$ and $B$ be two C$^*$-algebras, and let $\mathcal{N}$ be piecewise closed subset of $A$. Suppose that $\Delta: \mathcal{N} \to B$ is just a mapping which is multiplicative on pairs of commuting elements. If $a,b$ are two commuting elements in $\mathcal{N}$, their images, $\Delta (a)$ and $\Delta (b),$ automatically commute in $B$. The conclusion is not so obvious in the case of JB$^*$-algebras. We begin with piecewise closed subsets of JB-algebras. \smallskip

Henceforth, given a subset $\mathcal{S}$ of a Banach space $X$, we shall write $\overline{span} \left( \mathcal{S} \right)$ for the closed linear span of $X$.  

\begin{proposition}\label{prop:asocc_def} Let $\mathfrak{A}$ and $\mathfrak{B}$ be a pair of JB$^*$-algebras, and let $\mathcal{N}_1 \subset \mathfrak{A}_{sa}$ be a piecewise Jordan closed subset. Suppose $ \Delta: \mathcal{N}_1 \longrightarrow \mathfrak{B}_{sa}$ is a mapping preserving Jordan products on pairs of operator commuting elements in $\mathcal{N}_1$. Then, for every pair of operator commuting elements $a$ and $b$ in $\mathcal{N}_1$, the elements $\Delta(a)$ and $\Delta(b)$ operator commute in $\mathfrak{B}$. 
\end{proposition}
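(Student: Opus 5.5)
The plan is to reduce the claim to an identity between $x:=\Delta(a)$ and $y:=\Delta(b)$, and then to verify operator commutativity inside a C$^*$-algebra. We read the commeasurability relation on $\mathcal{N}_1$ as operator commutativity.

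\emph{Step 1 (transfer identities).} Let $a,b\in\mathcal{N}_1$ operator commute. By reflexivity of the relation, $a^2=a\circ a\in\mathcal{N}_1$ and $\Delta(a^2)=\Delta(a)^2$. By piecewise closedness, $a\circ b\in\mathcal{N}_1$ and $\Delta(a\circ b)=\Delta(a)\circ\Delta(b)$. By \cite[Theorem]{vandeWet2020}, $a$ and $b$ generate an associative JB$^*$-subalgebra of mutually operator commuting elements. In particular:
\begin{itemize}
\item $a^2$ operator commutes with $b$;
\item $a\circ b$ operator commutes with $a$;
\item $(a\circ b)\circ a=a^2\circ b$.
\end{itemize}
Applying $\Delta$ to the two sides of the last identity and using multiplicativity on the relevant commuting pairs, we get
$$(x\circ y)\circ x=\Delta\big((a\circ b)\circ a\big)=\Delta(a^2\circ b)=x^2\circ y .$$

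\emph{Step 2 (pass to a C$^*$-algebra).} By the Shirshov--Cohn theorem for JB-algebras \cite[7.2.5]{HOS}, the JB-subalgebra of $\mathfrak{B}_{sa}$ generated by $x,y$ is special. So we may regard $x,y$ as self-adjoint elements of a C$^*$-algebra $C$, with $\circ$ the symmetrized product. Expanding both sides gives
$$(x\circ y)\circ x=\tfrac14(2xyx+x^2y+yx^2),\qquad x^2\circ y=\tfrac12(x^2y+yx^2).$$
The identity of Step 1 therefore becomes $2xyx=x^2y+yx^2$, that is, $[x,[x,y]]=0$.

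\emph{Step 3 (main step).} We must deduce $[x,y]=0$. Since $x$ commutes with $[x,y]$, the Kleinecke--Shirokov theorem shows that $[x,y]$ is quasinilpotent. But $[x,y]$ is skew-adjoint, hence normal, so its spectral radius equals its norm. Therefore $[x,y]=0$.

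\emph{Step 4 (conclude).} Since $xy=yx$ in $C$, the elements $x$ and $y$ generate a commutative C$^*$-algebra, hence an associative JB$^*$-subalgebra of $\mathfrak{B}$. By the equivalence in \cite[Theorem]{vandeWet2020}, $\Delta(a)$ and $\Delta(b)$ operator commute in $\mathfrak{B}$.

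The only delicate point is Step 3, which upgrades the single Jordan identity to genuine commutation. If one prefers to avoid Kleinecke--Shirokov, a fallback is the following direct computation. Put $c=i[x,y]$, which is self-adjoint and commutes with $x$. Then
$$c^2=i\,c(xy-yx)=i[x,cy],$$
so $c^2$ is a positive commutator with $x$ that also commutes with $x$; the same norm and spectral-radius reasoning then forces $c=0$.
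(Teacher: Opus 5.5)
Your argument is correct, but it takes a different route from the paper. The paper stays entirely within Jordan theory. It observes that $\mathrm{JB}(a,b)\cap\mathcal{N}_1$ is closed under Jordan products and consists of mutually operator commuting elements. Hence $\Delta$ maps it onto a product-closed set on which the Jordan product is associative. The JB-subalgebra generated by $\Delta(a),\Delta(b)$ is therefore associative, and van de Wetering's theorem \cite{vandeWet2020} finishes.

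You instead use multiplicativity on only four pairs, $(a,a)$, $(a,b)$, $(a\circ b,a)$ and $(a^2,b)$, to extract the single identity $[M_x,M_y](x)=0$. You then prove a rigidity lemma: for self-adjoint $x,y$ in a JB-algebra, $(x\circ y)\circ x=x^2\circ y$ already forces operator commutativity. This goes via Shirshov--Cohn (giving $[x,[x,y]]=0$ in a C$^*$-algebra), Kleinecke--Shirokov, and normality of the skew-adjoint commutator.

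The trade-offs are as follows. Your route asks less of $\mathcal{N}_1$ and $\Delta$: only $a^2,a\circ b\in\mathcal{N}_1$ and those four multiplicativity relations are needed. It also isolates a lemma of independent interest. The paper's route uses no representation or spectral theory. It also transfers verbatim to Proposition~\ref{prop:asocc_def_unit}, where $\Delta$ takes values in all of $\mathfrak{B}$. Your use of self-adjointness (the two-generator Shirshov--Cohn theorem and normality of $[x,y]$) does not carry over directly there.

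One small correction: your ``fallback'' does not actually avoid Kleinecke--Shirokov. To conclude that the positive element $c^2=i[x,cy]$ vanishes, you need $[x,cy]$ to be quasinilpotent, which is again Kleinecke--Shirokov applied to $x$ and $cy$. A genuinely elementary alternative works as follows. From $[x,[x,y]]=0$ you get $e^{itx}ye^{-itx}=y+it[x,y]$ (in the unitization of $C$). Its norm equals $\|y\|$ for every $t\in\mathbb{R}$, which forces $[x,y]=0$.
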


\begin{proof} Fix two operator commuting elements $a,b\in \mathcal{N}_1$. Let JB$(a,b)$ denote the JB-subalgebra of $\mathfrak{A}_{sa}$ generated by $a$ and $b$. As we commented above, since $a$ and $b$ operator commute, the JB-algebra JB$(a,b)$ is an associative algebra (cf. \cite[Theorem]{vandeWet2020}). The assumptions on $\mathcal{N}_1$ guarantee that JB$(a,b)\cap  \mathcal{N}_1$ also is a piecewise closed subset of $\mathfrak{A}_{sa}$ with respect to operator commutativity. Observe that, by construction and hypotheses, JB$(a,b)\cap  \mathcal{N}_1$ is closed for Jordan products of its elements, and so the norm-closure of its real-linear span is a JB-subalgebra of $\mathfrak{A}_{sa}$. Since $a,b\in \hbox{JB}(a,b)\cap  \mathcal{N}_1$, it follows that $\overline{span} \left( \hbox{JB}(a,b)\cap  \mathcal{N}_1 \right) = \hbox{JB}(a,b).$ Having in mind these properties, we deduce that for any two elements $c,d$ in $\hbox{JB}(a,b) \cap \mathcal{N}_1$ we have $c\circ d\in \hbox{JB}(a,b) \cap \mathcal{N}_1$ and
    \begin{equation}\label{eq: assoc_1}
        \Delta(c)\circ \Delta(d) = \Delta(c\circ d) \in \Delta \left( \hbox{JB}(a,b)\cap \mathcal{N}_1 \right). 
    \end{equation} Consequently, $\Delta \left( \hbox{JB}(a,b)\cap \mathcal{N}_1 \right)$ is closed for Jordan products of its elements. Now, let $\mathcal{C}$ denote the JB-subalgebra of $\mathfrak{B}_{sa}$ generated by $\Delta(\hbox{JB} (a,b)\cap \mathcal{N}_1) $. We shall show that $\mathcal{C}$ is associative. To prove this fact it suffices to show that $\Delta(\hbox{JB}(a,b)\cap \mathcal{N}_1)$ is associative. Given $x,y,z \in \hbox{JB}(a,b)\cap \mathcal{N}_1$, it follows from \eqref{eq: assoc_1} and the associativity of $\hbox{JB}(a,b)$ that 
    $$
     \begin{aligned}
        (\Delta(x)\circ \Delta(y))\circ \Delta(z) &= \Delta(x\circ y)\circ \Delta(z) = \Delta((x\circ y)\circ z) = \Delta(x \circ (y\circ z))  \\
        & = \Delta(x)\circ \Delta(y\circ z)  = \Delta(x)\circ (\Delta(y)\circ \Delta(z)), 
    \end{aligned}
    $$
 which gives the desired statement. Note now that the JB-subalgebra of $\mathfrak{B}_{sa}$ generated by $\Delta(a)$ and $\Delta(b)$ must be contained in $\mathcal{C}$, and hence it must be an associative subalgebra of $\mathfrak{B}_{sa}$.  The main theorem in \cite{vandeWet2020} assures that $\Delta(a)$ and $\Delta(b)$ operator commute. 
\end{proof}

In order to present additional examples of piecewise closed subsets of JB$^*$-algebras, we recall that an element $u$ in a unital JB$^*$-algebra $\mathfrak{A}$ is said to be a \emph{unitary} if it is invertible (i.e. there exists, a unique, $b\in \mathfrak{A},$ called the \emph{inverse} of $a$, such that $a \circ b =\mathbf{1}$ and $a^2 \circ b = a$) and its inverse coincides with $u^*$ (cf. \cite[\S 4.1.1, Theorem 4.2.28]{Cabrera} or \cite[\S 2]{CuetoPeralta2023}). The set of all unitaries in $\mathfrak{A}$ will be denoted by $\mathcal{U}(\mathfrak{A})$. An element $s$ in $\mathfrak{A}_{sa}$ is called a \emph{symmetry} if $s^2 = \unit$. The set of all symmetries in $\mathfrak{A}$ will be denoted by $\hbox{Symm}(\mathfrak{A})$. 
Basic properties of the set of unitary elements are gathered in \cite[Lemma 2.1]{CuetoPeralta2023}. For the moment being, the reader should be warned that the Jordan product of two unitaries might not be another unitary, however $U_u (v)$ is a unitary whenever $u$ and $v$ are.\smallskip

We continue with a technical lemma on operator commutativity for unitary elements in JBW$^*$-algebras. 

\begin{lemma}{\label{lemma_op_comm_unit}} Let $u,v$ be unitary elements in a JBW$^*$-algebra $\mathfrak{A}$. Then the following conditions are equivalent: 
    \begin{enumerate}[$(a)$]
        \item $u$ and $v$ operator commute in $\mathfrak{A}$, i.e. $u$ and $v$ are commeasurable;
         \item There exist $h,k\in \mathfrak{A}_{sa}$ such that $h$ and $k$ operator commute in $\mathfrak{A}$, $u = e^{i h}$ and $v = e^{ik}$;
        \item $u$ operator commutes with $v$ and $v^*$;
        \item The JB$^*$-subalgebra of $\mathfrak{A}$ generated by $u$ and $v$ is associative.   
    \end{enumerate}
\end{lemma}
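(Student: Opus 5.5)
The plan is to prove the cycle $(a)\Rightarrow(d)\Rightarrow(b)\Rightarrow(c)\Rightarrow(a)$. Everything rests on van de Wetering's theorem recalled above, which says that self-adjoint operator-commuting elements generate an associative JB$^*$-algebra, together with the fact that elements of an associative JB$^*$-subalgebra operator commute, at least when its self-adjoint generators do. The implication $(c)\Rightarrow(a)$ is trivial.

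\emph{From $(a)$ to $(d)$.} Write $u=x+iy$ with $x=\frac12(u+u^*)$ and $y=\frac1{2i}(u-u^*)$ in $\mathfrak{A}_{sa}$. Since $u$ is unitary, $u$ operator commutes with $u^*$, because $u^*=u^{-1}$ lies in the closed subalgebra generated by $u$ and $\unit$. Hence $x$ and $y$ operator commute, and likewise $v=x'+iy'$ with $x'$ and $y'$ operator commuting. The main obstacle is passing from ``$M_u$ and $M_v$ commute'' to ``$M_u$ and $M_{v^*}$ commute'', so that all four self-adjoint elements $x,y,x',y'$ mutually operator commute.

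To get past it, I would use that $M_u$ and $M_{u^*}=(M_u)^*$ commute, the adjoint being taken with respect to the Jordan involution structure. Concretely, $\mathfrak{A}$ embeds, via $\mathfrak{A}_{sa}$, as a JBW-algebra, and $M_a^\#:=\overline{M_{a^*}}$ satisfies $M_{a^*}(c)=\big(M_a(c^*)\big)^*$. Applying $^*$ to the identity $[M_u,M_v]=0$ gives $[M_{u^*},M_{v^*}]=0$, but not yet $[M_u,M_{v^*}]=0$. For that step I would work in the JBW$^*$-subalgebra $\mathcal{W}$ generated by $u$, which is associative (a commutative von Neumann algebra $C(K)$). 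Then $M_v$ commutes with $M_w$ for every $w$ in the weak$^*$-closed span of the powers $u^n$, using separate weak$^*$-continuity and the fact that the set of elements operator commuting with $v$ is a weak$^*$-closed subspace. The key point is that this subspace contains $u^{-1}=u^*$. Indeed $\sigma(u)\subseteq\mathbb{T}$, so $\bar z$ is a uniform limit of polynomials in $z$ only when the spectrum is not the whole circle, which is not automatic. Instead I would use the Jordan identity trick: if $w$ operator commutes with $v$ then so do $w^2$ and, by the standard Macdonald-type argument, $U_w$-expressions in it. Moreover $u^{*}=U_{u^*}(u)$, and the inverse lies in the closed algebra generated by $u$ via Jordan inversion $u^{-1}=U_u^{-1}(u)$, with $U_u$ an invertible operator that is a limit of polynomials in $M_u,M_{u^2}$.

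Once $u^*$ operator commutes with $v$, symmetrically $v^*$ operator commutes with $u$ and $u^*$. Hence $x,y,x',y'$ pairwise operator commute, and van de Wetering's theorem (extended to finitely many pairwise operator-commuting self-adjoints by induction, using item $(c)$ of that theorem) shows that the JB$^*$-algebra they generate is associative. This algebra contains $u$ and $v$, which gives $(d)$.

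\emph{From $(d)$ to $(b)$.} Let $\mathcal{C}$ be the weak$^*$-closure of the associative subalgebra generated by $u$, $v$ and $\unit$. It is closed under $^*$ since $u^*=u^{-1}$ lies in it, as shown above. It is therefore an associative JBW$^*$-algebra, i.e.\ an abelian von Neumann algebra $L^\infty$. In a commutative von Neumann algebra every unitary has a self-adjoint logarithm obtained by a Borel functional calculus with the branch $\arg\in(-\pi,\pi]$. This gives $h,k\in\mathcal{C}_{sa}$ with $u=e^{ih}$ and $v=e^{ik}$, and these operator commute because $\mathcal{C}$ is associative and generated by mutually operator commuting self-adjoints, again by van de Wetering's theorem $(c)$.

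\emph{From $(b)$ to $(c)$.} If $h$ and $k$ operator commute, they generate an associative algebra of mutually operator commuting elements whose weak$^*$-closure contains $e^{\pm ih}$ and $e^{\pm ik}$. Hence $u$ operator commutes with $v$ and with $v^*=e^{-ik}$.
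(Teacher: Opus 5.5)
Your implications $(d)\Rightarrow(b)\Rightarrow(c)\Rightarrow(a)$ are fine and essentially the paper's. In $(d)\Rightarrow(b)$ you need not appeal to $u^{-1}$ at all: a JB$^*$-subalgebra is self-adjoint by definition and contains $\unit=u\circ u^*$.

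The gap is in $(a)\Rightarrow(d)$, exactly at the step you call the main obstacle, namely showing that $u^*$ operator commutes with $v$. Neither mechanism you offer works.

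\emph{The inverse claim.} It is false that $u^{-1}$ lies in the norm- or weak$^*$-closed subalgebra generated by $u$ and $\unit$. For the bilateral shift $u\in B(\ell^2(\mathbb{Z}))$ these algebras identify with the disc algebra and with $H^\infty$, and neither contains $\bar z$; your own remark about the full circle already points to this. Jordan inversion does not rescue it. If $U_u^{-1}$ (rather than $U_u=2M_u^2-M_{u^2}$) were a limit of polynomials in $M_u,M_{u^2}$, applying it to $\unit$ would exhibit $u^{-2}$ as a norm limit of polynomials in $u$. The same false claim is your justification that $u$ and $u^*$ operator commute. That conclusion is true, but for a different reason: $u=e^{ih}$ with $h$ self-adjoint, cf.\ Remark~\ref{remark JB*-subalgebra gen by a unitary}.

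\emph{The squaring claim.} The statement ``if $w$ operator commutes with $v$, so does $w^2$'' does not follow from the Jordan identity or from Macdonald's theorem, which transfers identities, not implications. It already fails in special Jordan algebras. In $A^+$, two elements operator commute iff their associative commutator is central in $A$; so in the Weyl algebra ($pq-qp=1$), $p$ operator commutes with $q$ but $p^2$ does not.

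Consequently unitarity is never used effectively, yet some Fuglede-type input is indispensable: $E_{12}\in M_2(\mathbb{C})$ operator commutes with itself but not with $E_{12}^*=E_{21}$.

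The paper supplies this input as follows. It writes $u=e^{ih_1}$, $v=e^{ik_1}$ and embeds the JB$^*$-algebra generated by $h_1,k_1$ into a C$^*$-algebra $A$ via the Shirshov--Cohn theorem. It then uses \cite[Proposition 1.2]{EscolanoPeraltaVillena2025_OpCommut} to turn operator commutativity into $uv=vu$ in $A$. For unitaries this gives $u^*v=vu^*$, so the C$^*$-algebra generated by $u,v$ in $A$ is commutative and $(d)$ follows. Logarithms in the weak$^*$-closure and van de Wetering's theorem then give $(b)$.

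If you prefer to stay inside $\mathfrak{A}$, Rosenblum's proof of Fuglede's theorem works in the Banach algebra $B(\mathfrak{A})$. For $c\in\mathfrak{A}_{sa}$ one has $e^{itM_c}=U_{e^{itc/2}}$, which has norm at most $1$. Also $M_u$ and $M_{u^*}$ commute (as $u=e^{ih}$), and $\lambda u^*-\bar\lambda u\in i\,\mathfrak{A}_{sa}$. Using $[M_u,M_v]=0$, the entire function $\lambda\mapsto e^{\lambda M_{u^*}}M_ve^{-\lambda M_{u^*}}=e^{\lambda M_{u^*}-\bar\lambda M_u}M_ve^{\bar\lambda M_u-\lambda M_{u^*}}$ is bounded by $\|M_v\|$. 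It is therefore constant, and differentiating at $0$ gives $[M_{u^*},M_v]=0$.

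With that step in place your endgame goes through, with one further point. Passing from two to four pairwise operator commuting self-adjoint generators needs more than item $(c)$ of van de Wetering's theorem. For instance, it needs that the self-adjoint elements operator commuting with a fixed self-adjoint element form a JB-subalgebra.
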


\begin{proof} Since $\mathfrak{A}$ is a JBW$^*$-algebra, we can find hermitian elements $h_1,k_1\in \mathfrak{A}$ such that $u = e^{i h_1}$ and $v = e^{i k_1}$ (cf. \cite[Remark 3.2]{CuetoPeralta2023}). Note that, even assuming that $u$ and $v$ operator commute, we do not have direct arguments to show that $h_1$ and $k_1$ operator commute in $\mathfrak{A}$.\smallskip
	
$(a) \Rightarrow (b)$ Let $\mathfrak{A}_{h_1,k_1}$ and $\mathfrak{A}_{u,v}$ denote the JB$^*$-subalgebras of $\mathfrak{A}$ generated by $\{h_1,k_1\}$ and $\{u,v\}$, respectively. The weak$^*$-closure of $\mathfrak{A}_{u,v}$ in $\mathfrak{A}$, denoted by $\overline{\mathfrak{A}_{u,v}}^{w^*}$, is a JBW$^*$-subalgebra of $\mathfrak{A}$. By the the Shirshov-Cohn theorem (see \cite[Theorem 2.4.14]{HOS} and \cite{Wright1977}), there exists a unital C$^*$-algebra $A,$ with unit $\mathbf{1}$,  containing $\mathfrak{A}_{h_1,k_1}$ as a JB$^*$-subalgebra. Clearly $u,v\in \mathfrak{A}_{u,v}\subseteq  \mathfrak{A}_{h_1,k_1}\subseteq A$ as JB$^*$-subalgebras. Since $u$ and $v$ operator commute in $\mathfrak{A}$, we deduce from \cite[Proposition 1.2]{EscolanoPeraltaVillena2025_OpCommut} that $u$ and $v$ commute as elements in $A$. Let $A_{u,v}$ denote the C$^*$-subalgebra of $A$ generated by $u$ and $v$, which must be commutative because $u$ and $v$ commute in $A$. Since $\mathfrak{A}_{u,v}\subseteq {A}_{u,v}$, we deduce that $\mathfrak{A}_{u,v}$ must be associative, and consequently $\overline{\mathfrak{A}_{u,v}}^{w^*}$ is an associative JBW$^*$-algebra, and hence a commutative von Neumann algebra. By \cite[Proposition 4.4.10]{KadRingVolI} (or by \cite[Remark 3.2]{CuetoPeralta2023}), there exists $h,k\in \left(\overline{\mathfrak{A}_{u,v}}^{w^*}\right)_{sa}\subseteq \mathfrak{A}_{sa}$ such that $u = e^{i h}$ and $v = e^{i k}$. Having in mind that $h$ and $k$ are self-adjoint elements of $\mathfrak{A}$ contained in the associative JBW$^*$-algebra $\overline{\mathfrak{A}_{u,v}}^{w^*}$, the commented result by van de Wetering (cf. \cite[Theorem]{vandeWet2020}) assures that $h$ and $k$ operator commute in $\mathfrak{A}$ as desired.\smallskip 
	
$(b) \Rightarrow (c)$ Under the assumptions in $(b)$, a    
new application of \cite[Theorem]{vandeWet2020} assures that any two powers of $h$ and $k$ operator commute in $\mathfrak{A}$. Observe now that $$\left[ M_{u}, M_{v^*} \right] = \left[ M_{e^{ih}}, M_{e^{-ik}} \right]  = \sum_{j,n=0}^{\infty} \frac{i^{j-n}}{j! n!} \left[M_{h^{j}}, M_{k^{n}}\right]=0,$$ which proves that $u$ and $v^*$ operator commute. Similar arguments lead to the conclusion that $u$ and $v$  operator commute.\smallskip

The implications $(c) \Rightarrow (a)$ and $(b) \Rightarrow (d)$ are clear (see \cite[Theorem]{vandeWet2020} for the latter implication). \smallskip

$(d) \Rightarrow (b)$ Suppose now that the JB$^*$-subalgebra of $\mathfrak{A}$ generated by $u$ and $v$ (denoted by $\mathfrak{A}_{u,v}$) is associative. Therefore, $\mathfrak{A}_{u,v}$ is a commutative C$^*$-algebra, and its weak$^*$-closure in $\mathfrak{A}$ is a commutative von Neumann algebra. As in the proof of $(a)\Rightarrow (b)$, there exists $h_1,k_1\in (\overline{\mathfrak{A}_{u,v}}^{w^*})_{sa}$ operator commuting in $\mathfrak{A}$ such that $u = e^{i h_1}$ and $v = e^{i k_1}$. 
\end{proof}

\begin{remark}\label{remark exponential of i hermitians} In a unital JB$^*$-algebra $\mathfrak{A},$ the set $\{e^{i h} : h\in \mathfrak{A}_{sa}\}$ can be a strict subset of $\mathcal{U} (\mathfrak{A})$ (cf. \cite[Remark 3.2]{CuetoPeralta2023}). Lemma~\ref{lemma_op_comm_unit} proves the equivalence of the following statements for all $u,v\in \mathcal{U} (\mathfrak{A})$ by just considering $\mathfrak{A}$ inside $\mathfrak{A}^{**}$:
    \begin{enumerate}[$(1)$]
	\item $u$ and $v$ operator commute in $\mathfrak{A}$, i.e. $u$ and $v$ are commeasurable;
	\item There exist $h,k\in \mathfrak{A}^{**}_{sa}$ such that $h$ and $k$ operator commute in $\mathfrak{A}^{**}$, $u = e^{i h}$ and $v = e^{ik}$;
	\item $u$ operator commute with $v$ and $v^*$;
	\item The JB$^*$-subalgebra of $\mathfrak{A}$ generated by $u$ and $v$ is associative.   
\end{enumerate}	
	
Actually the following statements are equivalent for all $h,k\in \mathfrak{A}_{sa}$:
\begin{enumerate}[$(a)$]
	\item $e^{i t h}$ and $e^{it k}$ operator commute in $\mathfrak{A}$ for all $t\in \mathbb{R}$;
	\item $h$ and $k$ operator commute;
	\item The JB$^*$-subalgebra of $\mathfrak{A}$ generated by $h$ and $k$ is associative;	
	\item $U_{e^{it h}} \left(e^{2 i s k}\right) = \{e^{it h}, e^{-i2 s k }, e^{i t h}\} = \{ e^{i s k}, e^{- i2 t h}, e^{i s k}\} = U_{e^{i s k}} \left(e^{2 i t h}\right),$ for all $s,t$ in $\mathbb{R}$.  
\end{enumerate} The equivalence $(b)\Leftrightarrow (c)$ follows from \cite[Theorem]{vandeWet2020}.  The just quoted result also assures that if $h$ and $k$ operator commute, it holds that any two powers of $t h$ and $t k$ operator commute in $\mathfrak{A}$ for all real $t$, and thus $e^{i t h}$ and $e^{it k}$ operator commute in $\mathfrak{A}$ for all $t\in \mathbb{R}$. That is $(b)\Rightarrow (a)$.\smallskip

Assuming that $(a)$ holds we deduce that $$\displaystyle [M_h,M_k] = \left[ M_{ \lim_{t\to 0} \frac{e^{i t k}-\unit}{t}},  M_{\lim_{t\to 0} \frac{e^{i t h}-\unit}{t}}  \right] = \lim_{t\to 0} \frac{1}{t^2} \left[ M_{e^{i t k}}- M_{\unit},  M_{e^{i t h}} - M_{\unit}  \right] =0,$$ and thus $h$ and $k$ operator commute in $\mathfrak{A}$. Therefore,  $(a)\Rightarrow (b)$.\smallskip

Clearly, $(b) \Rightarrow (d)$. To prove $(d) \Rightarrow (b)$, let us fix $s \in \mathbb{R},$ and define two maps $\omega_1, \omega_2: \mathbb{C}\to \mathfrak{A}$ by 
$$ \omega_1(\lambda) = U_{e^{\lambda h}} (e^{2 i s k}),\hbox{ and } \omega_2(\lambda) = U_{e^{i s k}}(e^{2 \lambda h}), \ (\lambda \in \mathbb{C}).$$ Clearly, $\omega_1$ and $\omega_2$ are holomorphic maps which coincide on the imaginary axis by assumptions, and hence they both define the same function on $\mathbb{C}$. The arbitrariness of $s\in \mathbb{R}$ assures that  $$U_{e^{\lambda h}} (e^{2 i s k}) = U_{e^{i s k}}(e^{2 \lambda h}), \ (\lambda \in \mathbb{C}, s\in \mathbb{R}),$$ that is, if we fix $\lambda\in \mathbb{C}$, the holomorphic mappings $\tilde{\omega}_1, \tilde{\omega}_2: \mathbb{C}\to \mathfrak{A},$  $ \tilde{\omega}_1(\mu) = U_{e^{\lambda h}} (e^{2 \mu k}),$ $\tilde{\omega}_2(\mu) = U_{e^{\mu k}}(e^{2 \lambda h})$ coincide on the imaginary axis, and thus on the whole complex plane. Therefore, $$U_{e^{\lambda h}} (e^{2 \mu k}) = U_{e^{\mu k}}(e^{2 \lambda h}), \hbox{ for all } (\lambda,\mu \in \mathbb{C}),$$ and consequently $U_{e^{\frac{t h}{2}}}(e^{t k}) = U_{e^{\frac{t k}{2}}}(e^{t h}).$ Proposition 1.1 in \cite{Hamhalter2025} assures that $e^{t h}$ and $e^{t k}$ operator commute in $\mathfrak{A}_{sa}$ (and hence on $\mathfrak{A}$) for all real $t$. Similar arguments to those given above show that $h$ and $k$ operator commute.  
\end{remark}

\begin{remark}\label{remark JB*-subalgebra gen by a unitary}  Let $u$ be a unitary element in a unital JB$^*$-algebra $\mathfrak{A}$. Although, we cannot always write $u = e^{i h}$ for some $h\in \mathfrak{A}_{sa}$ (see the previous remark), we can always find $h\in \mathfrak{A}_{sa}^{**}$ such that $u = e^{i h}$ in $\mathfrak{A}^{**}$. The JB$^*$-subalgebra $\mathfrak{A}^{**}(h,\unit)$ of $\mathfrak{A}^{**}$ is associative, and thus JB$^*$-isomorphic to a commutative von Neumann algebra. Clearly $\mathfrak{A}^{**}(h,\unit)$ contains $u$ and the whole JB$^*$-subalgebra of $\mathfrak{A}$ generated by $u$ and $\unit$. So, the JB$^*$-subalgebra of $\mathfrak{A}$ generated by $u$ and $\unit$ is associative.
\end{remark}

It follows from the above Lemma~\ref{lemma_op_comm_unit} that, for each JBW$^*$-algebra $\mathfrak{A},$ the set $\mathcal{U}(\mathfrak{A})$ is piecewise closed.  We state next a version of Proposition~\ref{prop:asocc_def} for the set of unitary elements in a JBW$^*$-algebra.

\begin{proposition}\label{prop:asocc_def_unit} Let $\mathfrak{A}$ and $\mathfrak{B}$ be JBW$^*$-algebras.  Suppose $ \Delta: \mathcal{U}(\mathfrak{A}) \longrightarrow \mathfrak{B}$ is a mapping preserving Jordan products on operator commuting elements in $\mathcal{U}(\mathfrak{A})$. Then, for every pair of operator commuting elements $u,v $ in $U(\mathfrak{A}),$ the corresponding images $\Delta(u)$ and $\Delta(v)$ operator commute in $\mathfrak{B}$. 
\end{proposition}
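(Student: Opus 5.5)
Fix operator commuting unitaries $u,v\in\mathcal{U}(\mathfrak{A})$. We mimic the proof of Proposition~\ref{prop:asocc_def}, replacing $\hbox{JB}(a,b)\cap\mathcal{N}_1$ by a suitable set of unitaries. By Lemma~\ref{lemma_op_comm_unit}, $u=e^{ih}$ and $v=e^{ik}$ with $h,k\in\mathfrak{A}_{sa}$ operator commuting. Moreover, by the proof of $(a)\Rightarrow(b)$ there, $h,k$ can be chosen inside the weak$^*$-closed associative JBW$^*$-subalgebra $\mathcal{W}:=\overline{\mathfrak{A}_{u,v}}^{w^*}$, which is a commutative von Neumann algebra. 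Set $\mathcal{N}:=\mathcal{U}(\mathfrak{A})\cap\mathcal{W}$, the unitary group of $\mathcal{W}$.

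By \cite[Theorem]{vandeWet2020}, any two elements of the associative JBW$^*$-subalgebra $\mathcal{W}$ operator commute in $\mathfrak{A}$. In particular, every pair $x,y\in\mathcal{N}$ is a commeasurable pair of unitaries, and $x\circ y\in\mathcal{N}$: indeed, in the commutative von Neumann algebra $\mathcal{W}$ the Jordan product is the ordinary product, so $x\circ y$ is again a unitary of $\mathcal{W}$. Hence, by hypothesis,
\[
\Delta(x)\circ\Delta(y)=\Delta(x\circ y)\in\Delta(\mathcal{N})\quad (x,y\in\mathcal{N}).
\]
So $\Delta(\mathcal{N})$ is closed under Jordan products. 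Associativity of $\mathcal{W}$ then gives, exactly as in the proof of Proposition~\ref{prop:asocc_def},
\[
(\Delta(x)\circ\Delta(y))\circ\Delta(z)=\Delta((x\circ y)\circ z)=\Delta(x\circ(y\circ z))=\Delta(x)\circ(\Delta(y)\circ\Delta(z))
\]
for all $x,y,z\in\mathcal{N}$. Thus $\Delta(\mathcal{N})$ is an associative, commutative multiplicative set, and its complex linear span is an associative commutative subalgebra of $\mathfrak{B}$.

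The main obstacle is passing from this associativity to operator commutativity in $\mathfrak{B}$. The images $\Delta(x)$ need not be self-adjoint, and the span of $\Delta(\mathcal{N})$ need not be $^*$-invariant, so \cite[Theorem]{vandeWet2020} does not apply directly. I would handle this using only the elements $\Delta(e^{ith})$ and $\Delta(e^{isk})$. The key idea is to enlarge the family to $\{\Delta(x),\Delta(x)^*: x\in\mathcal{N}\}$, and I see two possible routes.

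\emph{Route 1: show the images are unitary.} If $\Delta(\mathcal{N})\subseteq\mathcal{U}(\mathfrak{B})$, then from $\Delta(\unit)\circ\Delta(\unit)=\Delta(\unit)$ we get that $\Delta(\unit)$ is a projection and a unitary, hence $\Delta(\unit)=\unit$. Also $\Delta(x)\circ\Delta(x^*)=\Delta(\unit)$, which inside the associative algebra generated by $\Delta(\mathcal{N})$ gives $\Delta(x^*)=\Delta(x)^{-1}=\Delta(x)^*$. Consequently $\Delta(\mathcal{N})$ is $^*$-closed, and the closed span of $\Delta(\mathcal{N})$ is an associative JB$^*$-subalgebra. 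Then the equivalence $(d)\Leftrightarrow(a)$ of Lemma~\ref{lemma_op_comm_unit} (applied in $\mathfrak{B}$, or through Remark~\ref{remark exponential of i hermitians} together with van de Wetering) yields that $\Delta(u)$ and $\Delta(v)$ operator commute.

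\emph{Route 2: if the codomain is a general $\mathfrak{B}$.} If we cannot guarantee unitary values, I would argue directly: $\Delta(u)$ and $\Delta(v)$ lie in an associative commutative subalgebra $\mathcal{C}$ of $\mathfrak{B}$. Embedding the JB$^*$-subalgebra they generate into a C$^*$-algebra (Shirshov--Cohn, as in the proof of Lemma~\ref{lemma_op_comm_unit}), I would use \cite[Proposition 1.2]{EscolanoPeraltaVillena2025_OpCommut} to convert associative commutation into operator commutation. I expect this last conversion to be the delicate point, and would first attempt Route 1.
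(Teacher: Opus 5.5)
Neither of your routes proves the statement as written. Route~1 needs $\Delta(\mathcal N)\subseteq\mathcal U(\mathfrak B)$, which is not a hypothesis and cannot be derived from the hypotheses. Route~2 cannot work: lying in an associative but not $*$-closed Jordan subalgebra does not imply operator commutativity. Also, \cite[Proposition 1.2]{EscolanoPeraltaVillena2025_OpCommut} goes from operator commutativity to commutativity in an enveloping C$^*$-algebra, which is the opposite of the direction you need.

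In fact, with codomain $\mathfrak B$ the statement is false. Take the following data.
\begin{itemize}
\item $\mathfrak A=\mathbb C\oplus\mathbb C$, so all unitaries operator commute and $\circ$ is the pointwise product.
\item $\mathfrak B=M_3(\mathbb C)$.
\item A nonzero group homomorphism $\phi:\mathbb T\to(\mathbb R,+)$. For instance, take a $\mathbb Q$-linear $\psi:\mathbb R\to\mathbb R$ with $\psi(1)=0\neq\psi(\sqrt2)$ and push it down to $\mathbb R/\mathbb Z\cong\mathbb T$. Such a $\phi$ is discontinuous, but the proposition assumes no continuity.
\item $\Delta(\lambda,\mu)=I+\phi(\lambda)E_{12}+\phi(\mu)E_{23}+\tfrac12\phi(\lambda)\phi(\mu)E_{13}$.
\end{itemize}
Since $E_{12}\circ E_{23}=\tfrac12E_{13}$ and all other Jordan products among $E_{12},E_{23},E_{13}$ vanish, a direct computation gives $\Delta(xy)=\Delta(x)\circ\Delta(y)$ for all $x,y\in\mathcal U(\mathfrak A)$. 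Choose $\lambda_0$ with $\phi(\lambda_0)=1$, and put $u=(\lambda_0,1)$, $v=(1,\lambda_0)$. Then $\Delta(u)=I+E_{12}$ and $\Delta(v)=I+E_{23}$. In any C$^*$-algebra $(a\circ c)\circ b-a\circ(c\circ b)=\tfrac14[c,[a,b]]$. Here $[\Delta(u),\Delta(v)]=E_{13}$, so $c=E_{31}$ gives $\tfrac14(E_{33}-E_{11})\neq0$. Hence $\Delta(u)$ and $\Delta(v)$ do not operator commute, even though both lie in the associative Jordan subalgebra $\mathrm{span}\{I,E_{12},E_{23},E_{13}\}$. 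This is exactly why Route~2 fails.

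The paper's own proof has the same gap. Your second paragraph matches its argument, with $\hbox{JB}^*(u,v)$ in place of your $\mathcal W$: it shows that $\Delta(\mathcal U(\hbox{JB}^*(u,v)))$ is Jordan-closed and associative. It then asserts that the JB$^*$-subalgebra generated by $\Delta(u),\Delta(v),\unit$ is associative and applies Lemma~\ref{lemma_op_comm_unit}. That step tacitly treats $\Delta(u),\Delta(v)$ as unitaries and ignores $*$-closure.

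Your Route~1 supplies exactly the missing justification once the codomain is $\mathcal U(\mathfrak B)$. That is the case in which the result is invoked later, in Proposition~\ref{1.2}, where the map takes values in $\mathcal U(\mathfrak B)$. So state the result that way, drop Route~2, and tighten two points of Route~1.
\begin{itemize}
\item $\Delta(\unit)$ is a priori an idempotent unitary, not a projection. It equals $\unit$ by the argument of Lemma~\ref{l every Jordan piecewise homomorphisms unitaries is unital}.
\item To identify $\Delta(x^*)$ with the Jordan inverse $\Delta(x)^{-1}=\Delta(x)^*$ in $\mathfrak B$, you must check both $\Delta(x)\circ\Delta(x^*)=\unit$ and $\Delta(x)^2\circ\Delta(x^*)=\Delta(x)$. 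The second identity follows from associativity on $\Delta(\mathcal N)$ together with $\Delta(x)^2=\Delta(x^2)$. Uniqueness of inverses then gives $\Delta(x^*)=\Delta(x)^*$.
\end{itemize}
With these points, $\Delta(\mathcal N)$ is $*$-closed, its closed span is an associative JB$^*$-subalgebra containing $\Delta(u),\Delta(v),\unit$, and Lemma~\ref{lemma_op_comm_unit} applies in $\mathfrak B$.
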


\begin{proof} Keeping the notation above, let the symbols JB$^*(u,v)$ and JB$^*(\Delta(u), \Delta(v))$ stand for the  JB$^*$-algebras of $\mathfrak{A}$ and $\mathfrak{B}$ generated by $\{u,v,\mathbf{1}\}$ and $\{\Delta(u), \Delta(v), \mathbf{1}\}$, respectively. Since $u$ and $v$ operator commute, Lemma~\ref{lemma_op_comm_unit} implies that JB$^*(u,v)$ is associative, and hence a commutative and unital C$^*$-algebra.\smallskip

Observe that the set $\mathcal{U} \left( \hbox{JB}^*(u,v) \right) = \mathcal{U} (\mathfrak{A})\cap \hbox{JB}^*(u,v)\subseteq \mathcal{U} (\mathfrak{A})$ is self-adjoint and closed for Jordan products, and contains $u,v,$ and $\mathbf{1}$. Consequently, $\hbox{JB}^*(u,v) = \overline{span} \left( \hbox{JB}^*(u,v)\cap \mathcal{U} (\mathfrak{A}) \right)$.\smallskip

Given $w_1,w_2,w_3\in \mathcal{U} \left( \hbox{JB}^*(u,v) \right)$, the hypotheses on $\Delta$ guarantee that $$\Delta (w_1) \circ \Delta(w_2) = \Delta (w_1\circ w_2) \in \Delta \left(\mathcal{U} \left( \hbox{JB}^*(u,v) \right)\right),$$ and $$\begin{aligned}
 \Delta (w_1) &\circ (\Delta(w_2) \circ \Delta(w_3))= \Delta (w_1) \circ \Delta(w_2 \circ w_3) = \Delta (w_1 \circ (w_2 \circ w_3)) \\&= \Delta ((w_1 \circ w_2) \circ w_3) = \Delta (w_1\circ w_2) \circ \Delta( w_3) = (\Delta (w_1)\circ \Delta ( w_2) )\circ \Delta( w_3).
\end{aligned}$$ All together proves that $\Delta \left(\mathcal{U} \left( \hbox{JB}^*(u,v) \right)\right)$ is closed for the Jordan product of its elements and the Jordan product acts associatively on any three of them. Since clearly $\Delta(u), \Delta(v)\in \Delta \left(\mathcal{U} \left( \hbox{JB}^*(u,v) \right)\right),$ it follows that the JB$^*$-subalgebra of $\mathfrak{B}$ generated by $\Delta(u), \Delta(v)$ and the unit must be associative. Finally, Lemma~\ref{lemma_op_comm_unit} assures that $\Delta(u)$ and  $\Delta(v)$ operator commute.
\end{proof}

\section{Preservers of Jordan products on the set of unitaries in a JBW\texorpdfstring{$^*$}{*}-algebra}\label{sec:3}

We have already seen in the previous section that the set of unitary elements in a JBW$^*$-algebra is a piecewise closed subset with respect to operator commutativity. In this section we study piecewise Jordan homomorphisms between certain sets of unitary elements in unital JB$^*$-algebras. \smallskip 

By the Gelfand--Naimark theorem, every unitary $u$ in a unital $C^{*}$-algebra $A$ can be viewed as a unitary element in the algebra $B(H)$ of all bounded linear operators on a complex Hilbert space $H$, in such a way that $u$ itself acts as a unitary operator on $H$. Consequently, one-parameter unitary groups in $A$ fall under the hypotheses of well-known results such as Stone's one-parameter theorem.
However, unitary elements in a unital JB$^{*}$-algebra $\mathfrak{A}$ can not be always regarded as unitaries on some complex Hilbert space $H$. Thanks to the useful result in \cite[Theorem 3.1]{CuetoPeralta2023} (see also \cite{GarPe2021}), we now have access to a genuine Jordan analogue of Stone's theorem for uniformly continuous one-parameter unitary groups for unital JB$^{*}$-algebras.

\begin{proposition}\label{1.2} Let $\mathfrak{A}$ and $\mathfrak{B}$ be unital JB$^*$-algebras. Let $\mathcal{N}$ be a piecewise closed subset of $\mathfrak{A}$ containing $\exp(i \mathfrak{A}_{sa})$. Suppose finally that $$\Phi: \mathcal{N}\longrightarrow \mathcal{U}(\mathfrak{B})$$ is a continuous piecewise Jordan homomorphism. Then there exists a mapping $f: \mathfrak{A}_{sa}\to \mathfrak{B}_{sa}$ which is homogeneous, additive on operator commuting elements, preserves operator commutativity, and satisfies the following identity:
    	\[  \Phi(e^{ita})=e^{if(a)t} \,, \mbox{ for all } a\in \mathfrak{A}_{sa} \mbox{ and } t\in \mathbb{R}\,.               \] 
\end{proposition}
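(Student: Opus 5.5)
Fix $a\in\mathfrak{A}_{sa}$ and consider $t\mapsto \Phi(e^{ita})$. For $s,t\in\mathbb{R}$ the unitaries $e^{isa}$ and $e^{ita}$ lie in the associative JB$^*$-subalgebra generated by $a$ and $\unit$, so they operator commute. Their Jordan product is $e^{i(s+t)a}$, and all three elements lie in $\exp(i\mathfrak{A}_{sa})\subseteq\mathcal{N}$. Since $\Phi$ is a piecewise Jordan homomorphism,
\[
\Phi(e^{i(s+t)a})=\Phi(e^{isa})\circ\Phi(e^{ita}),
\]
and the images operator commute. The case $t=s=0$ gives $\Phi(\unit)=\Phi(\unit)^2$. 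Since $\Phi(\unit)$ is unitary, hence invertible, this forces $\Phi(\unit)=\unit$.

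Thus $u_a(t):=\Phi(e^{ita})$ is a one-parameter family of unitaries in $\mathfrak{B}$ that is pairwise operator commuting (by Proposition~\ref{prop:asocc_def_unit}, or directly by the definition of piecewise homomorphism) and satisfies $u_a(s+t)=u_a(s)\circ u_a(t)$. It is continuous because $\Phi$ and $t\mapsto e^{ita}$ are norm continuous. I would then apply the Jordan Stone theorem \cite[Theorem 3.1]{CuetoPeralta2023}. If that result is formulated for groups with $U$-operators, I would first pass to the associative JB$^*$-subalgebra of $\mathfrak{B}$ generated by $\{u_a(t), u_a(t)^*\}_{t}$. By Lemma~\ref{lemma_op_comm_unit} and the pairwise operator commutativity this is a commutative C$^*$-algebra $C$, and there $u_a$ is a norm-continuous one-parameter unitary group. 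Norm continuity gives a uniformly continuous group, so there is a unique $f(a)\in C_{sa}\subseteq\mathfrak{B}_{sa}$ with $u_a(t)=e^{itf(a)}$. Concretely, take $f(a)=\frac1i\log u_a(t_0)/t_0$ for small $t_0$ and extend by the group law. Uniqueness holds because $f(a)=-i\frac{d}{dt}u_a(t)|_{t=0}$.

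For the properties of $f$, homogeneity follows from $e^{it(\lambda a)}=e^{i(t\lambda)a}$. This gives $e^{itf(\lambda a)}=e^{it\lambda f(a)}$, and differentiating at $t=0$ yields $f(\lambda a)=\lambda f(a)$ for real $\lambda$. Now let $a,b$ operator commute. Then $e^{ita}$ and $e^{itb}$ operator commute and lie in a commutative C$^*$-subalgebra, so their Jordan product is $e^{it(a+b)}$. Hence
\[
e^{itf(a+b)}=e^{itf(a)}\circ e^{itf(b)}.
\]
By Proposition~\ref{prop:asocc_def_unit}, $e^{itf(a)}$ and $e^{isf(b)}$ operator commute for all $s,t$. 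Remark~\ref{remark exponential of i hermitians}, (a)$\Rightarrow$(b), then shows that $f(a)$ and $f(b)$ operator commute, which is the preservation of operator commutativity. Working in the associative subalgebra they generate, the right-hand side equals $e^{it(f(a)+f(b))}$. Differentiating at $t=0$ gives $f(a+b)=f(a)+f(b)$.

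The main obstacle I expect is the Stone step. One must make sure the one-parameter group hypotheses of \cite[Theorem 3.1]{CuetoPeralta2023} are met: a Jordan product law rather than a $U$-operator law, together with norm continuity. The cleanest route is the reduction above to the commutative C$^*$-algebra generated by the orbit, where the classical argument applies. The key point there is that pairwise operator commutativity of the whole family makes that subalgebra associative, which needs the family version of Lemma~\ref{lemma_op_comm_unit}, itself via \cite[Theorem]{vandeWet2020}.
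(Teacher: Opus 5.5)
Your proposal is correct and follows essentially the same route as the paper: you view $t\mapsto\Phi(e^{ita})$ as a continuous one-parameter unitary group for the Jordan product, extract $f(a)$ from the Jordan Stone theorem \cite[Theorem 3.1]{CuetoPeralta2023} (or inside the commutative C$^*$-algebra generated by the orbit), and then obtain homogeneity, additivity and preservation of operator commutativity as the paper does, via Remark~\ref{remark exponential of i hermitians}. Your ordering in the additivity step (first showing that $f(a)$ and $f(b)$ operator commute, and only then writing $e^{itf(a)}\circ e^{itf(b)}=e^{it(f(a)+f(b))}$) is slightly more careful than the paper's; also, since $\mathfrak{A},\mathfrak{B}$ are only unital JB$^*$-algebras here, you should take the commutativity of the images directly from the definition of a piecewise homomorphism, because Proposition~\ref{prop:asocc_def_unit} is stated for JBW$^*$-algebras.
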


\begin{proof} Let us pick $a\in \mathfrak{A}_{sa}$, and define a mapping 
	\[q: \mathbb{R}\to \exp(i \mathfrak{A}_{sa})\subseteq \mathcal{N},\ \ q(t)= e^{ita}.    \] 
Then the range of $q$ lies in the associative JB$^*$-subalgebra generated by $a$ and the unit element, and hence $q(\mathbb{R})$ consists of operator commuting unitary elements in $\mathcal{N}$ (cf. Remark~\ref{remark exponential of i hermitians}). Let us now consider the mapping $Q = \Phi q :\mathbb{R}\to \mathcal{U}(\mathfrak{B}).$ The properties of the maps $\Phi$ and $q$ guarantee that $$Q (t+ s) = \Phi (q(s) \circ q(t)) = \Phi (q(s)) \circ \Phi ( q(t)) = Q(s)\circ Q(t),$$ and hence $Q$ is a continuous one-parameter group of unitary elements in $\mathfrak{B}$. By considering that the JB$^*$-subalgebra generated by the set $\set{\Phi(e^{ita})}{t\in \mathbb{R}}$ must be a commutative C$^*$-algebra (cf. Proposition~\ref{prop:asocc_def_unit}), or by applying \cite[Theorem 3.1]{CuetoPeralta2023}, we deduce the existence of a unique $f(a)\in \mathfrak{B}_{sa}$ such that $\Phi(e^{iat})= Q(t)=   e^{i t f(a)}$, for all $t\in \mathbb{R}$. We have therefore defined a function $f: \mathfrak{A}_{sa}\to \mathfrak{B}_{sa}$ satisfying the final identity in the conclusions. We shall prove next the other properties of $f$.\smallskip
 
Let us take two  operator commuting elements $a,b\in \mathfrak{A}_{sa}$. By construction, we have
	$$ e^{i t f(a+b)}=\Phi(e^{i t (a+b)}) = \Phi(e^{i t a})\circ \Phi(e^{i t b}) =e^{i t f(a)}  \circ  e^{i t f(b)} = e^{i t (f(a)+f(b))}\ \ \ (t\in \mathbb{R}),$$ which guarantees that $ f(a+b)=f(a)+f(b),$ that is, $f$ is additive on operator commuting elements. The real-$1$-homogeneity of $f$ is even easier to check.\smallskip
	  
The hypotheses on $\Phi$ imply that, for $a$ and $b$ as above,  $\Phi(e^{ita})= e^{i t f(a)}$ and $\Phi(e^{i s b}) = e^{i s f(b)}$ operator commute for all $s,t\in \mathbb{R}$. We obtain from Remark~\ref{remark exponential of i hermitians} that $f(a)$ and  $f(b)$ operator commute.
	\end{proof}
	
The next lemma shows that piecewise Jordan homomorphisms between the unitary sets of two unital JB$^*$-algebras are unital.  	

\begin{lemma}\label{l every Jordan piecewise homomorphisms unitaries is unital} Let $\Phi: \mathcal{U}(\mathfrak{A}) \rightarrow \mathcal{U}(\mathfrak{B})$ be a piecewise Jordan homomorphism between the unitary sets of two unital JB$^*$-algebras. Then $\Phi (\unit) = \unit.$
\end{lemma}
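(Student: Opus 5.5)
The argument uses only the multiplicativity of $\Phi$ on the commeasurable pair $(\unit,\unit)$ together with the fact that $\Phi(\unit)$ is a unitary. First, $\unit$ is a unitary that operator commutes with itself. So $(\unit,\unit)$ belongs to the commeasurability relation of $\mathcal{U}(\mathfrak{A})$, and the definition of piecewise Jordan homomorphism gives
$$w:=\Phi(\unit)=\Phi(\unit\circ\unit)=\Phi(\unit)\circ\Phi(\unit)=w^2 .$$
Thus $w\in\mathcal{U}(\mathfrak{B})$ is an idempotent.

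It remains to see that a unitary idempotent in a unital JB$^*$-algebra must be $\unit$. Since $w$ is invertible with inverse $w^*$, we have $w\circ w^*=\unit$ and $w^2\circ w^*=w$. Substituting $w^2=w$ into the second identity gives $w\circ w^*=w$, and comparing with the first gives $w=\unit$.

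Alternatively, one can argue with functional calculus. By Remark~\ref{remark JB*-subalgebra gen by a unitary}, the JB$^*$-subalgebra generated by $w$ and $\unit$ is associative, hence a commutative unital C$^*$-algebra $C(K)$. There $w$ is a function of modulus one satisfying $w^2=w$, so $w\equiv 1$.

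I expect no genuine obstacle here. The only point to check is that the diagonal pair $(\unit,\unit)$ is allowed in the piecewise structure. This holds because commeasurability is reflexive, and because $\unit$ operator commutes with everything, including itself.
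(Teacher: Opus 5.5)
Your proof is correct and follows the paper's route. The paper also uses that $\unit$ operator commutes with itself to get $\Phi(\unit)=\Phi(\unit)^2$, and then concludes via Remark~\ref{remark JB*-subalgebra gen by a unitary}, exactly as in your alternative argument. Your primary finish, $w=w^2\circ w^*=w\circ w^*=\unit$, read directly off the definition of the Jordan inverse, is a slightly more elementary way to close the same step and avoids passing to $\mathfrak{B}^{**}$.
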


\begin{proof} Since $\unit$ operator commute with every other unitary element in $\mathcal{U}(\mathfrak{A})$, in particular with itself, it follows from the hypotheses that $\Phi (\unit) = \Phi (\unit^2) = \Phi (\unit)^2$. That is, $\Phi(\unit)$ is a unitary and an idempotent in $\mathfrak{B}$. As seen in Remark~\ref{remark JB*-subalgebra gen by a unitary}, the JB$^*$-subalgebra of $\mathfrak{B}$ generated by $\Phi (\unit)$ is a unital and commutative C$^*$-algebra, so $\Phi (\unit) = \unit.$
\end{proof}

We characterize next piecewise Jordan isomorphisms between the unitary sets of two JBW$^*$-algebras. 

\begin{theorem}\label{teo_piecewise_bij} Let $\mathfrak{A}$ and $\mathfrak{B}$ be two JBW$^*$-algebras, and let $\Phi: \mathcal{U}(\mathfrak{A}) \rightarrow \mathcal{U}(\mathfrak{B})$ be a bicontinuous bijection between the corresponding sets of unitary elements. Then the following conditions are equivalent: 
\begin{enumerate}[$(i)$]
        \item $\Phi$ is a piecewise Jordan isomorphism;
        \item $\Phi(U_u(v)) = U_{\Phi(u)}(\Phi(v)),$ for all $u,v \in \mathcal{U(\mathfrak{A})}$ such that $u$ and $v$ operator commute, and $\Phi^{-1}(U_{u'}(v')) = U_{\Phi^{-1}(u')}(\Phi^{-1}(v')),$ for all $u',v' \in \mathcal{U(\mathfrak{B})}$ such that $u'$ and $v'$ operator commute;
        \item There exists a bijection $f : \mathfrak{A}_{sa} \rightarrow \mathfrak{B}_{sa}$ which is homogeneous, additive on operator commuting elements,  preserves operator commutativity in both directions, and satisfies 
         $$ \Phi(e^{ita}) = e^{itf(a)}, \hbox{ and }  \Phi^{-1} (e^{i t b}) = e^{ i t f^{-1}(b)},$$ for all $a \in \mathfrak{A}_{sa},$ $b \in \mathfrak{B}_{sa},$ and $t \in \mathbb{R}$.
    \end{enumerate}
\end{theorem}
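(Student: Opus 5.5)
I would prove the cycle $(i)\Rightarrow(iii)\Rightarrow(ii)\Rightarrow(i)$. Everything will be reduced to associative JB$^*$-subalgebras, which are commutative C$^*$-algebras, via Lemma~\ref{lemma_op_comm_unit}, Proposition~\ref{prop:asocc_def_unit} and Remark~\ref{remark exponential of i hermitians}.

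\emph{$(i)\Rightarrow(iii)$.} Since $\mathfrak{A}$ is a JBW$^*$-algebra, $\exp(i\mathfrak{A}_{sa})\subseteq\mathcal{U}(\mathfrak{A})$, so Proposition~\ref{1.2} applies to $\Phi$. It gives a homogeneous map $f:\mathfrak{A}_{sa}\to\mathfrak{B}_{sa}$, additive on operator commuting pairs and preserving operator commutativity, with $\Phi(e^{ita})=e^{itf(a)}$. Applying Proposition~\ref{1.2} to $\Phi^{-1}$ gives $g:\mathfrak{B}_{sa}\to\mathfrak{A}_{sa}$ with the same properties and $\Phi^{-1}(e^{itb})=e^{itg(b)}$. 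Then, for all $t\in\mathbb{R}$,
$$e^{ita}=\Phi^{-1}\bigl(e^{itf(a)}\bigr)=e^{itg(f(a))}.$$
Differentiating at $t=0$, or invoking the uniqueness in the Jordan Stone theorem \cite[Theorem 3.1]{CuetoPeralta2023}, gives $g(f(a))=a$. Symmetrically $f(g(b))=b$. Hence $f$ is a bijection with $f^{-1}=g$, and it preserves operator commutativity in both directions.

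\emph{$(iii)\Rightarrow(ii)$.} Take operator commuting $u,v\in\mathcal{U}(\mathfrak{A})$. By Lemma~\ref{lemma_op_comm_unit} we can write $u=e^{ih}$ and $v=e^{ik}$ with $h,k$ operator commuting. In the associative subalgebra generated by $h,k$ we have $U_u(v)=u^2\circ v=e^{i(2h+k)}$. Since $f$ is additive on operator commuting elements and homogeneous, $f(2h+k)=2f(h)+f(k)$, and $f(h),f(k)$ operator commute. Therefore
$$\Phi(U_u(v))=e^{i(2f(h)+f(k))}=U_{e^{if(h)}}\bigl(e^{if(k)}\bigr)=U_{\Phi(u)}(\Phi(v)).$$
The same argument with $f^{-1}$ handles $\Phi^{-1}$.

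\emph{$(ii)\Rightarrow(i)$.} This is the main obstacle, because Jordan products must be recovered from quadratic ones, and the square root $e^{ih}\mapsto e^{ih/2}$ is not canonical. Applying $(ii)$ with $v=\unit$ is not available until unitality is known. First, with $u=v=\unit$ we get $\Phi(\unit)=U_{\Phi(\unit)}(\Phi(\unit))=\Phi(\unit)^3$. Then, using $(ii)$ for $\Phi^{-1}$ with $u'=v'=\Phi(\unit)$, we get $\Phi^{-1}(\Phi(\unit)^3)=\unit^3=\unit$. This yields no new information, so I would argue via one-parameter groups instead. Fix $a\in\mathfrak{A}_{sa}$ and set $Q(t)=\Phi(e^{ita})$. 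By $(ii)$, $Q(2t+s)=U_{Q(t)}(Q(s))$, and the elements $Q(t)$ pairwise operator commute. To see this, run a variant of Proposition~\ref{prop:asocc_def_unit}: quadratic products within $\{e^{ita}\}$ are associative expressions, so the image generates an associative subalgebra. Inside the commutative C$^*$-algebra they generate, $Q(2t+s)=Q(t)^2Q(s)$. Taking $s=-2t$ gives $Q(0)=Q(t)^2Q(-2t)$; combined with continuity this shows $W(t):=Q(t)Q(0)^{-1}$ is multiplicative along a dense set of dyadic combinations. Hence $W(t)=e^{itf(a)}$ and $Q(t)=Q(0)e^{itf(a)}$.

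The crux is to show that $Q(0)=\Phi(\unit)=\unit$. The inverse condition gives a symmetric decomposition for $\Phi^{-1}$. Composing the two, $e^{ita}=\Phi^{-1}(Q(t))$ forces $\Phi^{-1}(\Phi(\unit)e^{itf(a)})=e^{ita}$. Choosing $a$ so that $e^{ita}$ runs over elements operator commuting with a candidate square root of $\Phi(\unit)$, I expect to conclude that $\Phi(\unit)$ is a central symmetry $z$ with $\Phi(e^{ita})=z\circ e^{itf(a)}$. I would then show $z=\unit$ using bijectivity and the relation $\Phi(\unit)=\Phi(\unit)^3$ on suitably chosen $v$. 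If $z\neq\unit$ survives, this is exactly where the argument needs more care. Once $Q(0)=\unit$, the identity $u\circ v=e^{i(h+k)}\circ\tfrac12\bigl(\unit+\cdots\bigr)$ in the commutative C$^*$-algebra reduces Jordan products to exponentials. More precisely, for commuting $h,k$, $e^{ih}\circ e^{ik}=e^{i(h+k)}$, and additivity of $f$, obtained as in Proposition~\ref{1.2} from the quadratic relation, gives $\Phi(u\circ v)=\Phi(u)\circ\Phi(v)$. The same holds for $\Phi^{-1}$, so $\Phi$ is a piecewise Jordan isomorphism.
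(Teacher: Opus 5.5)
\textbf{Gap in $(ii)\Rightarrow(i)$: unitality cannot be proved, because the implication is false as stated.}

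Your $(i)\Rightarrow(iii)$ is the paper's argument. Your $(iii)\Rightarrow(ii)$ is correct; the paper instead proves $(i)\Rightarrow(ii)$ and $(iii)\Rightarrow(i)$ directly.

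The genuine gap is the one you flag yourself in $(ii)\Rightarrow(i)$. You never prove $\Phi(\unit)=\unit$, and everything after that point is a sketch. This gap cannot be closed. Take $\mathfrak{B}=\mathfrak{A}$ and $\Phi(u)=-u$:
\begin{itemize}
\item it is an isometric bijection of $\mathcal{U}(\mathfrak{A})$ with $\Phi^{-1}=\Phi$;
\item since $U_x(y)$ is quadratic in $x$ and linear in $y$, $\Phi(U_u(v))=-U_u(v)=U_{-u}(-v)=U_{\Phi(u)}(\Phi(v))$ for all $u,v$, so $(ii)$ holds;
\item but $\Phi(\unit\circ\unit)=-\unit\neq\unit=\Phi(\unit)\circ\Phi(\unit)$, so $(i)$ fails, and $(iii)$ fails at $t=0$.
\end{itemize}
The same happens for $u\mapsto z\circ u$ with any central symmetry $z\neq\unit$. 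So no use of bijectivity and $\Phi(\unit)=\Phi(\unit)^3$ can force $z=\unit$.

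\textbf{The paper's proof has the same hole.} It starts from $\Phi(\unit)=\unit$, quoting Lemma~\ref{l every Jordan piecewise homomorphisms unitaries is unital}. That lemma assumes $\Phi$ is already a piecewise Jordan homomorphism, i.e.\ $(i)$. Every later step of $(ii)\Rightarrow(i)$ relies on it: $\Phi(u^2)=U_{\Phi(u)}(\Phi(\unit))=\Phi(u)^2$, the integer and rational powers, and the one-parameter group.

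\textbf{What $(ii)$ actually gives} is your own diagnosis.
\begin{itemize}
\item $z=\Phi(\unit)$ is a unitary with $z=U_z(z)=z^3$, hence a symmetry.
\item Since $\Phi(v)=\Phi(U_{\unit}(v))=U_z(\Phi(v))$ for every $v$, surjectivity makes $U_z$ fix every unitary of $\mathfrak{B}$. Unitaries span $\mathfrak{B}$, so $U_z$ is the identity. This kills the Peirce-$1$ space of $\frac12(\unit+z)$, so $z$ is central.
\item $z\circ\Phi$ is unital and still satisfies $(ii)$.
\end{itemize}
So $(ii)$ must be supplemented by $\Phi(\unit)=\unit$, or $(i)$ replaced by ``$\Phi(\unit)\circ\Phi$ is a piecewise Jordan isomorphism''.

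\textbf{A second problem, even with unitality.} You appeal to ``a variant of Proposition~\ref{prop:asocc_def_unit}'' to get operator commutativity of the $Q(t)$. That is not justified: the proposition uses closure under Jordan products, and closure of a set under $U$-products does not by itself make the generated subalgebra associative. With unitality, follow the paper instead:
\begin{itemize}
\item prove $\Phi(u^n)=\Phi(u)^n$ for $n\in\mathbb{Z}$, then pass to rational exponents;
\item continuity and the Jordan Stone theorem then give $\Phi(e^{ith})=e^{ith'}$;
\item Remark~\ref{remark exponential of i hermitians} $(d)\Rightarrow(b)$, applied to the transported identity $U_{e^{ith'}}(e^{2isk'})=U_{e^{isk'}}(e^{2ith'})$, shows that $\Phi(u)$ and $\Phi(v)$ operator commute.
\end{itemize}
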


\begin{proof} Lemma~\ref{l every Jordan piecewise homomorphisms unitaries is unital} implies that $\Phi(\unit) = \unit$.\smallskip

$(i)\Rightarrow (ii)$ Take two operator commuting elements $u$ and $v$ in $\mathcal{U} (\mathfrak{A})$. Working with the JBW$^*$-subalgebra of $\mathfrak{A}$ generated by $u, v$ and $\unit,$ which is an associative algebra (see Remark~\ref{remark exponential of i hermitians}), and in particular, a commutative von Neumann algebra, it can be easily deduced that
$$ U_u(v) = u^2\circ v.$$
Therefore, since $\Phi$ is a piecewise Jordan isomorphism we get that 
$ \Phi ( u^2 \circ v) = \Phi(u)^2 \circ \Phi (v)$, where $\Phi(u^2) = \Phi(u)^2$ and $\Phi(u)$ operator commutes with $\Phi(v)$ by hypotheses (cf. Lemma~\ref{lemma_op_comm_unit}). By working on the associative JBW$^*$-subalgebra generated by $\Phi(u), \Phi(v),$ and $\unit$, we derive that $U_{\Phi(u)}(\Phi (v)) = \Phi(u)^2 \circ \Phi(v)$. Therefore, $\Phi (U_u(v)) = U_{\Phi(u)}(\Phi(v))$ as we wanted to prove. Similar arguments applied to $\Phi ^{-1}$ prove the second part of the statement in $(ii)$. \smallskip

$(ii)\Rightarrow (i)$ Let us take two operator commuting elements $u,v \in \mathcal{U}(\mathfrak{A})$. By Lemma~\ref{lemma_op_comm_unit} there exist two operator commuting elements $h$, $k\in \mathfrak{A}_{sa}$ such that $u = e^{i h}, v = e^{i k}$. It follows from the assumptions on $\Phi$ that $\Phi(u^2) =\Phi\left( U_{u} (\unit) \right) =  U_{\Phi\left(u\right)} (\Phi\left(\unit\right)) = U_{\Phi\left(u\right)} (\unit) = \Phi(u)^2$. An induction argument leads to $\Phi(u^n) = \Phi(u)^n$ for every $n \in \mathbb{N}$. We shall next show that $\Phi$ preserves one-parameter unitary groups.\smallskip

The element $w =  e^{i h/2} \in \mathcal{U}(\mathfrak{A})$  operator commutes with $u$ and $u^{-1}$ (see, for example, Lemma~\ref{lemma_op_comm_unit}) and satisfies $w^2  = u$. The conclusion in the previous paragraph implies that $\Phi (w)^2 = \Phi (w^2) = \Phi (u).$  The assumptions on $\Phi$ give $$\unit = \Phi(\unit) = \Phi(U_w(u^{-1})) = U_{\Phi(w)}(\Phi(u^{-1})).$$ The JBW$^*$-subalgebra generated by $\Phi(u^{-1})$, $\Phi(w)$, and $\unit$ is a JB$^*$-subalgebra of a C$^*$-algebra $A$ by the Shirshov-Cohn theorem, and hence by considering the associative product on $A$ (denoted by juxtaposition) we can write 
$$ \unit = U_{\Phi(w)} (\Phi(u^{-1})) = \Phi(w) \Phi(u^{-1}) \Phi(w),$$
which implies that 
$$\Phi(u^{-1}) = \Phi(w)^{-2} = (\Phi(w)^2)^{-1}  = \Phi(u)^{-1}.$$
Therefore $\Phi(u^{-n}) = \Phi((u^{-1})^{n})= \Phi(u^{-1})^n = \Phi(u)^{-n}$, for all natural $n$, which in turn gives  $\Phi(u^k) = \Phi(u)^k,$ for every $k \in \mathbb{Z}.$ Let us now take $n, n' \in \mathbb{Z}$ and $m, m' \in \mathbb{N}$. We deduce from the previous conclusions that 
$$
\begin{aligned}
    \Phi \left ( e^{i(\frac{n}{m} + \frac{n'}{m'}) h} \right) &= \Phi \left ( e^{i \frac{n m' + n' m}{m m' } h} \right) 
    =  \Phi \left ( e^{i \frac{1}{m m'} h} \right)^{n m' + n' m}\\
    &= \Phi\left( e^{i \frac{1}{m m'} h}\right)^{n m'} \circ \Phi \left(e^{i \frac{1}{m m'} h} \right)^{n' m} = \Phi\left( e^{i \frac{n}{m} h }\right) \circ \Phi \left(e^{i \frac{n'}{m'} h} \right).
\end{aligned}
$$
The continuity of $\Phi$ assures that the previous identity holds when $\frac{n}{m} $ and $\frac{n'}{m'}$ are replaced with general real numbers $s, t$, i.e., 
$$ \Phi \left ( e^{i(s + t) h} \right) = \Phi\left( e^{i s h}\right) \circ \Phi \left(e^{i t h} \right) \ \ (s,t\in \mathbb{R}),$$
which implies that the mapping $t \mapsto \Phi \left(e^{i t h} \right)$ is a continuous one-parameter unitary group, and similarly for $k$. By \cite[Theorem 3.1]{CuetoPeralta2023} there exist self-adjoint elements $h',k' \in \mathfrak{B}_{sa}$ such that $\Phi\left( e^{i t h}\right) = e^{i t h'}$ and $\Phi\left( e^{i t k}\right) = e^{i t k'}$, for all $t \in \mathbb{R}$. Now the identity 
$$U_{e^{i t h}} \left( e^{2 s i k} \right) = U_{e^{i s k}} \left(e^{i 2 t h}\right), \quad s, t \in \mathbb{R} $$
transforms by $\Phi$ into 
$$U_{e^{i t h'}} \left(e^{2 s i k'}\right) = U_{e^{i s k'}} \left(e^{i 2 t h'}\right), \quad s, t \in \mathbb{R}.$$ We deduce from Remark~\ref{remark exponential of i hermitians} that the elements $\Phi(e^{ i t h})= e^{i s h'}$ and $\Phi(e^{ i s k}) = e^{i s k'}$ operator commute for all $s,t\in \mathbb{R}$; in particular $\Phi(e^{i h}) = \Phi(u)$ and $\Phi(e^{i k}) = \Phi(v)$ operator commute. This allows us to rewrite the identity from $(ii)$ in the form 
$$\Phi(u^2\circ v) = \Phi(U_u(v)) = U_{\Phi(u)}(\Phi(v)) = \Phi(u)^2 \circ \Phi(v) = \Phi(u^2) \circ \Phi(v).$$
Since in a JBW$^*$-algebra any unitary element can be written as the square of another unitary element (cf. Remark~\ref{remark JB*-subalgebra gen by a unitary}), we obtain that $\Phi$ is a piecewise Jordan homomorphism. Applying the same arguments to $\Phi^{-1}$, we derive that $\Phi^{-1}$ is a piecewise Jordan homomorphism too. \smallskip

$(i) \Rightarrow (iii)$ By Proposition~\ref{1.2} there exists a homogeneous maps $f : \mathfrak{A}_{sa} \rightarrow \mathfrak{B}_{sa}$ and $g : \mathfrak{B}_{sa} \rightarrow \mathfrak{A}_{sa}$ which are additive on operator commuting elements, preserve operator commutativity, and satisfy $\Phi(e^{i t h}) = e^{i t f(h)},$ and $\Phi^{-1} (e^{ i t k}) = e^{i t g(k)},$ for all $h \in \mathfrak{A}_{sa},$ $k \in \mathfrak{B}_{sa}$, and $t \in \mathbb{R}$. It follows that $$ e^{i t h} = \Phi^{-1}(\Phi(e^{i t h})) = \Phi^{-1}(e^{i t f(h)}) = e^{i t g(f(h))}, \hbox{ for all } t\in \mathbb{R}.$$ We therefore conclude that $g(f(h)) = h,$ for all $h\in \mathfrak{A}_{sa}$. We can similarly get $f(g(k)) = k,$ for all $k\in \mathfrak{B}_{sa}$. Thus $f$ and $g$ are bijective maps with $f = g^{-1}$, and the rest is clear. \smallskip

$(iii) \Rightarrow (i)$ Suppose $u$ and $v$ are two operator commuting elements in $\mathcal{U} (\mathfrak{A})$. By Lemma~\ref{lemma_op_comm_unit}, there exists operator commuting elements $h,k\in \mathfrak{A}_{sa}$ satisfying $u = e^{ih}$ and $v = e^{i k}$. The assumptions on $\Phi$ and $f$ guarantee that $f(h)$ and $f(k)$ operator commute, $\Phi (u) = \Phi (e^{i h}) = e^{i f(h)}$ and $\Phi (v) = \Phi (e^{i k}) = e^{i f(k)}$ operator commute, $f(h + k ) = f(h) + f(k)$, and $$ \Phi (u) \circ \Phi (v)  = e^{i f(h)} \circ e^{i f(k)} = e^{i (f(h)+f(k))} = e^{i f(h+k)} = \Phi (e^{i (h+k)}) = \Phi (u \circ v).$$ This shows that $\Phi$ is a piecewise Jordan homomorphism. We can similarly prove that $\Phi^{-1}$ is a piecewise Jordan homomorphism.
\end{proof}

Proving that the mapping $f$ obtained in Theorem~\ref{teo_piecewise_bij}$(iii)$ is continuous will require a deeper argument.\smallskip

Ongoing research on preservers is intensively exploring when linearity of certain maps can be relaxed from the hypotheses. For examples, Theorem 3.1 in \cite{Hamhalter2025} assures that if $\mathfrak{M}$ is a JBW-algebra without type $I_2$ direct summand and $\mathfrak{J}$ is a JB-algebra, every positive homogeneous mapping  $g : \mathfrak{M} \to  \mathfrak{J}$ which is additive on operator commuting elements is a bounded linear mapping.  Observe that every such a mapping $g$ satisfies that $\|g (p)\|\leq \|g(\unit)\|$, and hence it is bounded on the set of projections of $\mathfrak{M}$.\smallskip 

In this paper we shall need a tool asserting that linearity is an automatic property for homogeneous mappings between JBW-algebras which are additive on operator commuting elements as soon as they are bounded on the closed unit ball. The result is actually a consequence of the Mackey--Gleason--Bunce--Wright theorem for JBW$^*$-algebras, which has been recently established in \cite{EscolanoPeraltaVillena2025_mes} after decades of waiting a confirmation of this conjecture. 

\begin{theorem}\label{t corollary of the Mackey-Gleason} Let $\mathfrak{A}$ be a JBW$^*$-algebra without direct summands of type $I_2$, and let $X$ be a real normed space. Suppose $f: \mathfrak{A}_{sa}\to X$ is a homogeneous mapping which is additive on operator commuting elements, that is, $f(a+b) = f(a) + f(b)$ whenever $a$ and $b$ operator commute in $\mathfrak{A}$. Then the following statements are equivalent:
\begin{enumerate}[$(a)$]
\item $f$ is linear and continuous;
\item $f$ is bounded on the closed unit ball of $\mathfrak{A}_{sa}$. 
\end{enumerate} 
\end{theorem}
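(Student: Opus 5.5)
The implication $(a)\Rightarrow(b)$ is immediate, so the work is in $(b)\Rightarrow(a)$. The plan is to restrict $f$ to projections, obtain a bounded finitely additive vector measure, apply the Bunce--Wright--Mackey--Gleason theorem of \cite{EscolanoPeraltaVillena2025_mes}, and then check that the resulting bounded linear map agrees with $f$ on all of $\mathfrak{A}_{sa}$.

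\emph{Step 1: the vector measure.} Define $\mu := f|_{\mathrm{Proj}(\mathfrak{A})}$. Orthogonal projections operator commute, so $\mu(p+q)=\mu(p)+\mu(q)$ whenever $p\perp q$; that is, $\mu$ is finitely additive. Every projection lies in the closed unit ball, so $\mu$ is bounded by (b). Passing to the completion $\widehat{X}$ of $X$ if needed, the Mackey--Gleason--Bunce--Wright theorem for JBW$^*$-algebras without type $I_2$ summands \cite{EscolanoPeraltaVillena2025_mes} yields a bounded real linear map $T:\mathfrak{A}_{sa}\to\widehat{X}$ with $T(p)=f(p)$ for every projection $p$.

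\emph{Step 2: agreement on finite spectrum elements.} Let $a=\sum_{j=1}^n \lambda_j p_j$ with $p_1,\dots,p_n$ mutually orthogonal projections and $\lambda_j\in\mathbb{R}$. Then the $\lambda_j p_j$ mutually operator commute, and any partial sum operator commutes with the next term, since all of them lie in the associative algebra generated by the $p_j$ (cf.\ \cite{vandeWet2020}). Additivity on commuting pairs together with homogeneity gives $f(a)=\sum_j \lambda_j f(p_j)=T(a)$.

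\emph{Step 3: extension to all of $\mathfrak{A}_{sa}$.} This is the main obstacle, because $f$ carries no continuity assumption, so $f(a)=T(a)$ cannot simply be passed to a limit. For $a\in\mathfrak{A}_{sa}$ and $\varepsilon>0$, spectral theory in the commutative von Neumann algebra $W^*(a,\unit)$ gives $b$ with finite spectrum in that algebra and $\|a-b\|<\varepsilon$. Then $a$ and $b$ operator commute, and so do $a-b$ and $b$. Additivity gives $f(a)=f(b)+f(a-b)$. Let $M$ be the bound of $f$ on the unit ball. By homogeneity, $\|f(a-b)\|\le M\|a-b\|<M\varepsilon$; the case $a=b$ is harmless since $f(0)=0$ by homogeneity. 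Hence
$$\|f(a)-T(a)\|\le \|f(a-b)\|+\|T(b)-T(a)\|\le (M+\|T\|)\varepsilon.$$
Letting $\varepsilon\to 0$ gives $f(a)=T(a)$. In particular $T$ takes values in $X$, and $f=T$ is linear and continuous.

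The steps that need care are the exact hypotheses of the cited Mackey--Gleason theorem (possibly Banach-space valued, which is why we pass to $\widehat{X}$) and the estimate in Step 3. That estimate uses boundedness and homogeneity in place of continuity, and it works only because the approximating $b$ operator commutes with $a$.
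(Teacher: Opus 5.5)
Your proposal is correct and follows the paper's proof essentially step for step. You restrict $f$ to $\mathrm{Proj}(\mathfrak{A})$ to get a bounded finitely additive measure, then invoke \cite{EscolanoPeraltaVillena2025_mes} to obtain a bounded linear $T$ that agrees with $f$ on projections. Finally you show $f=T$ by approximating $a$ with finite-spectrum elements of the commutative JBW$^*$-algebra generated by $a$ and $\unit$, using additivity on the operator commuting pair and the bound $\|f(c)\|\le M\|c\|$ from homogeneity. Your passage to the completion $\widehat{X}$ is a harmless extra precaution that the paper does not make explicit.
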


\begin{proof} The implication  $(a)\Rightarrow (b)$ is clear. Suppose now that $f$ is bounded on the closed unit ball of $\mathfrak{A}_{sa}$, that is, there exists a positive $L$ such that $\|f(a)\|\leq L$ for all $a\in \mathfrak{A}_{sa}$ with $\|a\|\leq 1$. By homogeneity, $\|f(a)\|\leq L \|a\|,$ for all $a\in \mathfrak{A}_{sa}$. Consider the mapping $\mu = f|_{\hbox{Proj}(\mathfrak{A})}: \hbox{Proj}(\mathfrak{A})\to X,$ $\mu (p) = f(p)$.  Since orthogonal projections in $\mathfrak{A}$ operator commute, it follows from the properties of $f$ that the mapping $\mu$ is a finitely additive measure. It is also clear that $\|\mu (p) \|\leq L$ for all $p\in \hbox{Proj}(\mathfrak{A})$. Theorem 6.2 in \cite{EscolanoPeraltaVillena2025_mes} assures the existence of a bounded linear mapping $T: \mathfrak{A}_{sa}\to X$ such that $f(p) = \mu (p) = T(p),$ for all $p\in \hbox{Proj} (\mathfrak{A})$. \smallskip
	
We shall finally prove that $T = f$. Fix $a\in \mathfrak{A}_{sa}$. The JBW$^*$-subalgebra, $\overline{\mathfrak{A}_{a}}^{w*},$ generated by $a$ and $\unit$ is a commutative von Neumann algebra. It follows from our previous discussion on operator commuting elements that every couple of elements in the self-adjoint part of $\overline{\mathfrak{A}_{a}}^{w*}$ operator commute in $\mathfrak{A}$ (cf. \cite[Theorem]{vandeWet2020} and the previous section).  For each positive $\varepsilon$ we can find mutually orthogonal projections $p_1,\ldots, p_m$ in $\overline{\mathfrak{A}_{a}}^{w*}$ and real numbers $\alpha_1,\ldots, \alpha_m$ such that $\displaystyle \left\| a - \sum_{j=1}^{m} \alpha_j p_j\right\| < \frac{\varepsilon}{2 \max\{L,\|T\|\}}$. The above properties assure that  $$\begin{aligned}
f\left( \sum_{j=1}^m \alpha_j p_j\right) = \sum_{j=1}^m & \alpha_j f\left(p_j\right) = \sum_{j=1}^m \alpha_j T\left(p_j\right) = T\left( \sum_{j=1}^m \alpha_j p_j\right),\\ 
\left\| f\left( \sum_{j=1}^m \alpha_j p_j\right) -f (a) \right\| &=  \left\| f\left( \sum_{j=1}^m \alpha_j p_j -a\right) \right\|\leq L \left\| \sum_{j=1}^m \alpha_j p_j - a \right\| <\frac{\varepsilon}{2}, 
\end{aligned}$$ $\displaystyle \left\| T\left( \sum_{j=1}^m \alpha_j p_j\right) - T (a) \right\| <\frac{\varepsilon}{2},$  and thus $\|f (a) -T(a)\| <\varepsilon$. It follows from the arbitrariness of $\varepsilon>0$  that $f(a) = T(a).$ 
\end{proof}

We shall see in Example~\ref{couterexample spin} that the previous Theorem~\ref{t corollary of the Mackey-Gleason} fails for JBW$^*$-algebras of type $I_2$.\smallskip

We continue with a technical lemma essentially borrowed from \cite{Hamhalter2023}. Henceforth, the symbol $\mathbb{T}$ will stand for the unit sphere of $\mathbb{C}$.

\begin{lemma}\label{inequality} Let $u$ be a unitary element in a unital JBW$^*$-algebra $\mathfrak{A}$, and let $n$ be a natural number  such that $n \nor{u-\unit}<2.$ Then $ n\nor{u-\unit}\le \frac{\pi}{2} \nor{u^n-\unit}.$ 
  \end{lemma}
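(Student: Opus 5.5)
The plan is to reduce to a scalar inequality on the unit circle via the functional calculus. First I place $u$ inside a commutative von Neumann algebra. By Remark~\ref{remark JB*-subalgebra gen by a unitary}, or directly from the JBW$^*$ setting, the JBW$^*$-subalgebra generated by $u$ and $\unit$ is associative. Hence it is isometrically Jordan $^*$-isomorphic to some $C(\Omega)$ (or $L^\infty$), with $\Omega$ compact. Under this identification $u$ becomes a continuous function $\omega\mapsto u(\omega)\in\mathbb{T}$. The elements $u-\unit$ and $u^n-\unit$ lie in this subalgebra, and $u^n$ is the same whether computed with the Jordan product or pointwise, since power associativity makes Jordan powers coincide with the commutative product. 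Norms are sup norms, so
$$\|u-\unit\| = \sup_{\omega}|u(\omega)-1|, \qquad \|u^n-\unit\|=\sup_\omega |u(\omega)^n-1|.$$
Equivalently, $\|u-\unit\|=\max\{|\lambda-1|:\lambda\in \hbox{J-}\sigma(u)\}$, with $\hbox{J-}\sigma(u)\subseteq\mathbb{T}$.

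Next I reduce to a scalar statement. Write $\lambda=e^{i\theta}$ with $\theta\in(-\pi,\pi]$. Then $|\lambda-1|=2|\sin(\theta/2)|$. The hypothesis $n\|u-\unit\|<2$ gives $2n|\sin(\theta/2)|<2$ for every spectral value. I want to conclude $n|\theta|<\pi$. My approach is to note that $|\sin x|\ge \frac{2}{\pi}|x|$ on $[-\pi/2,\pi/2]$, so $n|\theta|\cdot\frac{2}{\pi}\cdot\frac12\le n|\sin(\theta/2)|<1$, which yields $n|\theta|<\pi$. Under this constraint,
$$|\lambda^n-1|=2|\sin(n\theta/2)|\ge \tfrac{2}{\pi}\, n|\theta| \ge \tfrac{2}{\pi}\cdot 2n|\sin(\theta/2)| = \tfrac{2}{\pi}\, n|\lambda-1|,$$
where I use Jordan's inequality at $n\theta/2\in(-\pi/2,\pi/2)$ and $|\sin x|\le|x|$. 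This gives $n|\lambda-1|\le \frac{\pi}{2}|\lambda^n-1|$ pointwise.

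Finally I take suprema over $\Omega$: $n\|u-\unit\|=\sup n|u(\omega)-1|\le\frac{\pi}{2}\sup|u(\omega)^n-1|=\frac{\pi}{2}\|u^n-\unit\|$. The only delicate point is the passage to the commutative picture. Specifically, one must make sure the norm of $u^n-\unit$ in $\mathfrak{A}$ equals its sup norm as a function. This holds because JB$^*$-subalgebras inherit the norm and the associative subalgebra is a commutative C$^*$-algebra. The scalar estimate itself is elementary; the main care needed is the branch choice $\theta\in(-\pi,\pi]$, which ensures $n|\theta|<\pi$ from the hypothesis.
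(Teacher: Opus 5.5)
Your proposal is correct and follows the same route as the paper: identify the associative JB$^*$-subalgebra generated by $u$ and $\unit$ with some $C(\Omega)$, apply a scalar inequality on $\mathbb{T}$ pointwise, and take suprema. The only difference is that the paper imports the scalar estimate ($n|z-1|<2 \Rightarrow n|z-1|\le \frac{\pi}{2}|z^n-1|$) from the proof of Theorem 5.2 in Hamhalter's 2023 paper, whereas you prove it directly and correctly: you first use $\frac{2}{\pi}|x|\le|\sin x|\le|x|$ on $[-\pi/2,\pi/2]$ to get $n|\theta|<\pi$ from the hypothesis, and then apply the same bounds again to obtain the estimate.
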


\begin{proof} It is established in the proof of \cite[Theorem 5.2, page 13]{Hamhalter2023} that for $z\in \mathbb{T}$ and $n\in \mathbb{N}$, the condition $n |z-1|<2$ implies that $n |z-1|\le \frac{\pi}{2} |z^n-1|.$ Since, we can identify the JB$^*$-subalgebra of $\mathfrak{A}$ generated by $u$ and $\unit$ with a unital and commutative von Neumann algebra of the form $C(\Omega),$ for some compact Hausdorff space $\Omega$, a simple combination of this identification with the above conclusion for elements in $\mathbb{T}$ gives $$n\nor{u-\unit}\le \frac{\pi}2\nor{u^n-\unit}.$$ \end{proof}

We prove next that every continuous unital piecewise Jordan homomorphism between sets of unitary elements is locally Lipschitzian at the unit element.

\begin{lemma}\label{lemma:local_lip_prop}
Let $\Phi : \mathcal{U}(\mathfrak{A}) \rightarrow \mathcal{U}(\mathfrak{B})$ be a continuous unital piecewise Jordan homomorphism between the unitary sets of two JBW$^*$-algebras. Then $\Phi $ is locally Lipschitzian at $\unit$, that is, there exist $r, L> 0 $ satisfying the following property: 
$$\hbox{For all } u\in \mathcal{U}(\mathfrak{A}) \hbox{ with } \| u - \unit \| < r, \hbox{ we have } \| \Phi(u) - \unit \| \leqslant L \| u- \unit\|.
$$
\end{lemma}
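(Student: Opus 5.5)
We follow the strategy of \cite[Theorem 5.2]{Hamhalter2023}, with the power map $u\mapsto u^n$ and Lemma~\ref{inequality} replacing the associative group structure. By continuity of $\Phi$ at $\unit$ and $\Phi(\unit)=\unit$ (Lemma~\ref{l every Jordan piecewise homomorphisms unitaries is unital}), we fix $\delta>0$ such that $\|w-\unit\|<\delta$ implies $\|\Phi(w)-\unit\|<1$. We then choose $r>0$ small with $r<\delta$, and try to prove the inequality with a constant $L$ depending only on $\delta$.

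The first step is to record that $\Phi$ commutes with powers. For $u\in\mathcal{U}(\mathfrak{A})$ the element $u$ operator commutes with all its powers, since the JB$^*$-subalgebra generated by $u$ and $\unit$ is associative (Remark~\ref{remark JB*-subalgebra gen by a unitary}). Hence, by induction using the piecewise Jordan homomorphism property, $\Phi(u^n)=\Phi(u)^n$ for every $n\in\mathbb{N}$.

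Now fix $u$ with $0<\|u-\unit\|<r$, and let $n$ be the largest natural number with $n\|u-\unit\|<\delta'$, where $\delta'=\min\{\delta,1\}$. We first want $\|u^k-\unit\|<\delta$ for all $k\le n$. To get this we work in the commutative C$^*$-algebra $C(\Omega)$ generated by $u$. Pointwise, $|z^k-1|\le k|z-1|$ for $z\in\mathbb{T}$, so $\|u^k-\unit\|\le k\|u-\unit\|<\delta$. In particular $\|\Phi(u)^n-\unit\|=\|\Phi(u^n)-\unit\|<1$.

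Next we apply Lemma~\ref{inequality} to $\Phi(u)$. For this we need $n\|\Phi(u)-\unit\|<2$, which we obtain by an iterative argument. Set $v=\Phi(u)$. For each $k\le n$ we have $\|v^k-\unit\|<1$. Assume inductively that $(k-1)\|v-\unit\|<2$. We first need $k\|v-\unit\|<2$. This follows pointwise on the spectrum of $v$: if $z\in\mathbb{T}$ satisfies $|z^j-1|<1$ for all $j\le k$, then the argument of $z$ cannot wrap around, and one gets $k|z-1|\le \frac{\pi}{2}\cdot 1<2$. Applying Lemma~\ref{inequality} then gives
\[
k\|v-\unit\|\le \tfrac{\pi}{2}\|v^k-\unit\|<\tfrac{\pi}{2}.
\]
Taking $k=n$, we get $n\|\Phi(u)-\unit\|\le\frac{\pi}{2}$.

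Finally, the maximality of $n$ gives $(n+1)\|u-\unit\|\ge\delta'$, so $n\ge \delta'/(2\|u-\unit\|)$ when $r\le\delta'/2$. Therefore
\[
\|\Phi(u)-\unit\|\le \frac{\pi}{2n}\le \frac{\pi}{\delta'}\,\|u-\unit\|,
\]
and we may take $L=\pi/\delta'$. The main obstacle is the step ensuring $n\|\Phi(u)-\unit\|<2$ before Lemma~\ref{inequality} can be used. I expect to handle it by the scalar induction above on the spectrum of $v$ inside the commutative C$^*$-algebra it generates. The key scalar fact is: if $z\in\mathbb{T}$ and $|z^j-1|<1$ for all $j\le k$, then $z=e^{i\theta}$ with $|\theta|\le\pi/(3k)$ roughly.
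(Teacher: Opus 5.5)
Your argument is correct, but it is a direct proof, whereas the paper argues by contradiction. The paper assumes a sequence $u_k$ with $\varepsilon_k=\|\Phi(u_k)-\unit\|>k\|u_k-\unit\|$ and chooses the exponent from the \emph{target} side, via $\frac{1}{m_k+1}<\varepsilon_k\le\frac{1}{m_k}$. This makes the hypothesis $(m_k+1)\|\Phi(u_k)-\unit\|<2$ of Lemma~\ref{inequality} automatic and gives $(m_k+1)\varepsilon_k>1$, so a single power suffices: Lemma~\ref{inequality} forces $\|\Phi(u_k)^{m_k+1}-\unit\|\ge 2/\pi$, while $u_k^{m_k+1}\to\unit$ and continuity force $\Phi(u_k)^{m_k+1}\to\unit$.

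You instead choose $n$ from the \emph{source} side, $n\approx\delta'/\|u-\unit\|$, so you cannot know in advance that $n\|\Phi(u)-\unit\|<2$. You pay for this in two ways. First, you control every intermediate power: $\|\Phi(u)^k-\unit\|<1$ for all $k\le n$, from the continuity neighbourhood. Second, you need the no-wrap-around fact on $\mathbb{T}$. Made precise, it reads: $|e^{i\phi}-1|<1$ iff $\phi\in(-\pi/3,\pi/3)+2\pi\mathbb{Z}$. So if $|j\theta|<\pi/3$, then $|(j+1)\theta|<2\pi/3$, and the only admissible window there is $(-\pi/3,\pi/3)$.

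What you gain is an explicit constant, $L=\pi/\delta'$ with $r=\delta'/2$, whereas the paper only shows the constants exist. Once the no-wrap-around fact is established, it already gives $n\|\Phi(u)-\unit\|\le\pi/3$ directly in $C(\Omega)$. So your appeal to Lemma~\ref{inequality}, and the induction on $(k-1)\|v-\unit\|<2$, are redundant; dropping them streamlines the argument and improves the constant to $L=2\pi/(3\delta')$. You also justify $\Phi(u^n)=\Phi(u)^n$ explicitly through Remark~\ref{remark JB*-subalgebra gen by a unitary} and Lemma~\ref{lemma_op_comm_unit}, a step the paper uses tacitly.
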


\begin{proof} Arguing by contradiction we assume the existence of  a sequence $(u_k)_k\subseteq \mathcal{U} (\mathfrak{A})$ satisfying:  
    \[ \nor{u_k-\unit}<\frac 1k, \mbox{ and }  1\geq \e_k= \nor{\Phi(u_k)-\unit}>k \nor{u_k-\unit}.\] The continuity of $\Phi$  implies that $\e_k\to 0$. Let $(m_k)_k$ be a sequence of natural numbers such that
       \[ \frac 1{m_k+1}<\e_k\le \frac 1{m_k}\,.                  \] 
       
The inequality 
        \[  \nor{u_k^{m_k}-\unit}\le \nor{u_k-\unit} \ \nor{u_k^{m_k-\unit}+\cdots +\unit}\le m_k \nor{u_k-\unit}\le \frac{m_k\e_k}k\le \frac 1k,                          \] assures that $u_k^{m_k}\to \unit$ in norm, and so $\Phi(u_k)^{m_k+1}\to \unit$. \\
      
Since we can clearly assume that $m_k>2$ for all $k,$ we deduce that 
\[  2(m_k+1) \nor{\Phi(u_k)-\unit}= 2(m_k+1)\e_k   \le 2 \left(1 + \frac 1{m_k}\right) <3,            \]     
and so $(m_k+1) \nor{\Phi(u_k)-\unit}<2\,.$	Now, Lemma~\ref{inequality} assures that 
             \[  1 = \frac 1{\e_k} \nor{\Phi(u_k)-\unit}\le (m_k+1) \nor{\Phi(u_k)-1} \le \frac \pi 2\nor{\Phi(u_k)^{m_k+1}-\unit},         \]
           which contradicts that $\Phi(u_k)^{m_k+1}\to \unit$. 
\end{proof}

We can now establish a precise description of all bicontinuous piecewise Jordan isomorphisms between unitary sets of JBW$^*$-algebras.  

\begin{theorem}\label{theo: 4.3} Let $\mathfrak{A}$ and $\mathfrak{B}$ be JBW$^*$-algebras without direct summands of type $I_1$ and $I_2$. Let $\Phi : \mathcal{U}(\mathfrak{A}) \rightarrow \mathcal{U}(\mathfrak{B})$ be a bijective and bicontinuous piecewise Jordan isomorphism. Then there exist a linear Jordan $^*$-isomorphism $\theta: \mathfrak{A} \rightarrow \mathfrak{B}$,  a real linear mapping $\beta: \mathfrak{A_{sa}}\rightarrow Z(\mathfrak{B}_{sa})$, and an invertible central element $c \in \mathfrak{B}_{sa}$ such that $$ \Phi(e^{i a}) = e^{i \beta (a)}\circ e^{i  c\circ\theta(a)} = e^{i \beta(a)} \circ \theta \left( e^{i  \theta^{-1}( c )\circ a}\right),$$  for all $a\in \mathfrak{A}_{sa}$. In particular, the existence of such a piecewise isomorphism  $\Phi: \mathcal{U}(\mathfrak{A})\to \mathcal{U}(\mathfrak{B})$ assures that $\mathfrak{A}$ and $\mathfrak{B}$ are Jordan $^*$-isomorphic.
\end{theorem}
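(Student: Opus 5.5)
By Theorem~\ref{teo_piecewise_bij}, there is a bijection $f:\mathfrak{A}_{sa}\to\mathfrak{B}_{sa}$ with the following properties: it is homogeneous, additive on operator commuting elements, preserves operator commutativity in both directions, and satisfies $\Phi(e^{ita})=e^{itf(a)}$ and $\Phi^{-1}(e^{itb})=e^{itf^{-1}(b)}$. The plan is to prove that $f$ and $f^{-1}$ are bounded linear maps. We then apply the structure theorem of \cite{EscolanoPeraltaVillena2025_OpCommut} for linear bijections preserving operator commutativity in both directions between JBW$^*$-algebras without type $I_1$ and $I_2$ summands. The expected output is the form $f(a)=c\circ\theta(a)+\beta(a)$, with $\theta$ a Jordan $^*$-isomorphism, $c$ an invertible central self-adjoint element, and $\beta:\mathfrak{A}_{sa}\to Z(\mathfrak{B})_{sa}$ real linear. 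If that result is stated for complex-linear maps on $\mathfrak{A}$, we first extend $f$ complex-linearly; operator commutativity of $a+ib$ and $c+id$ reduces to that of the hermitian parts, so the extension still preserves it in both directions.

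Boundedness comes from Lemma~\ref{lemma:local_lip_prop}, which gives $r,L>0$ such that $\|\Phi(u)-\unit\|\le L\|u-\unit\|$ whenever $\|u-\unit\|<r$. Take $a\in\mathfrak{A}_{sa}$ with $\|a\|\le1$ and small $t>0$. Working in the commutative C$^*$-algebra generated by $a$ (respectively by $f(a)$), we have $\|e^{ita}-\unit\|\le t\|a\|\le t$ and $\|e^{itf(a)}-\unit\|=\sup_{\lambda\in\sigma(f(a))}|e^{it\lambda}-1|$. Choose $t=t_0$ fixed with $t_0<r$ so that $Lt_0<\sqrt2$. Then every spectral value $\lambda$ of $f(a)$ satisfies $|e^{it_0\lambda}-1|\le Lt_0$. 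This alone does not bound $\lambda$, because of periodicity. To handle that, we apply the same estimate to $sa$ for every $s\in(0,1]$, using homogeneity $f(sa)=sf(a)$. The function $s\mapsto e^{ist_0\lambda}$ is continuous, starts at $1$, and stays within the arc $|z-1|\le Lt_0$ for all $s\in[0,1]$. Hence $|st_0\lambda|$ cannot cross the first value $\delta>0$ where $|e^{i\delta}-1|=Lt_0$, giving $|t_0\lambda|\le\delta<\pi/2$. So $\|f(a)\|\le\delta/t_0$, and $f$ is bounded on the unit ball. The main obstacle is exactly this winding issue, and it is resolved by homogeneity together with the spectral calculus in the abelian subalgebra.

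Theorem~\ref{t corollary of the Mackey-Gleason} then makes $f$ linear and continuous; the absence of type $I_2$ summands is used here. The same argument applied to $\Phi^{-1}$, which is continuous, shows that $f^{-1}$ is linear and continuous. The structure theorem then yields $\theta$, $c$ and $\beta$ as above; linearity of $\beta$ follows from linearity of $f$ and $\theta$. Finally, $\beta(a)$ is central and hence operator commutes with $c\circ\theta(a)$, so
$$\Phi(e^{ia})=e^{if(a)}=e^{i\beta(a)}\circ e^{ic\circ\theta(a)}.$$
Since $\theta$ is a unital Jordan $^*$-isomorphism, it commutes with the functional calculus, so
$$e^{ic\circ\theta(a)}=\theta\big(e^{i\theta^{-1}(c)\circ a}\big),$$
and $\theta^{-1}(c)$ is central in $\mathfrak{A}$. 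This gives the stated formula, and the existence of $\theta$ shows that $\mathfrak{A}$ and $\mathfrak{B}$ are Jordan $^*$-isomorphic.
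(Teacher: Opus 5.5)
Your proposal is correct and follows the paper's proof essentially step by step: Theorem~\ref{teo_piecewise_bij}, the local Lipschitz Lemma~\ref{lemma:local_lip_prop}, Theorem~\ref{t corollary of the Mackey-Gleason} for $f$ and $f^{-1}$, and then \cite[Theorem 8.10]{EscolanoPeraltaVillena2025_OpCommut}. The only difference is the boundedness step: the paper avoids your winding issue by fixing $a$, dividing $\|e^{itf(a)}-\unit\|\le L\|e^{ita}-\unit\|$ by $t$ and letting $t\to 0$, which gives $\|f(a)\|\le L\|a\|$ directly, so your fixed-$t_0$ continuity argument is valid but unnecessary. Your conditional remark on complex-linear extensions is also not needed, since the cited structure theorem is applied directly to $f:\mathfrak{A}_{sa}\to\mathfrak{B}_{sa}$. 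That remark is in fact false as stated: $x=a+ib$ always operator commutes with itself, even when $a$ and $b$ do not operator commute.
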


\begin{proof} We know that $\Phi (\unit) = \unit$ (cf. Lemma~\ref{l every Jordan piecewise homomorphisms unitaries is unital}). \smallskip  
 
Lemma~\ref{lemma:local_lip_prop} assures that ${\Phi}$ enjoys the local Lipschitz property at $\unit$ for some positive constant $r, L> 0 $. By Theorem~\ref{teo_piecewise_bij} we also know the existence of a bijection $f: \mathfrak{A}_{sa} \rightarrow \mathfrak{B}_{sa}$ which is homogeneous, additive on operator commuting elements,  preserves operator commutativity in both directions, and satisfies 
$${\Phi}\left (e^{ita} \right) = e^{itf(a)}, \hbox{ for all } t \in \mathbb{R}, a \in \mathfrak{A}_{sa}.$$

Our next goal will consist in proving that $f$ is bounded and linear by applying Theorem~\ref{t corollary of the Mackey-Gleason}. For this purpose, we need to show the boundedness of $f$ on the unit ball of $\mathfrak{A}_{sa}$. Fix $a\in \mathfrak{A}_{sa}$. By employing the local Lipschitz property from Lemma~\ref{lemma:local_lip_prop}, for $t\in \mathbb{R}$ with $|t|$ small enough we have        
\[  \nor{ e^{itf(a)}-\unit} = \nor{{\Phi}(e^{iat})-\unit}\le L\nor{e^{iat}-\unit},    \]   and 
\[  \biggl\| \frac{e^{itf(a)}-\unit }{t}  \biggr\|  \le L \left\| \frac{e^{i t a}-\unit}t\right\|, \hbox{ for all real } t \hbox{ with } |t| \hbox{ small}.\] Taking limits at $t\to 0$ we are led to $\nor{f(a)}\le L\ \|a\|.$ Theorem~\ref{t corollary of the Mackey-Gleason} implies that $f$ is a bounded linear mapping. We can similarly prove that $f^{-1}$ is also continuous. \smallskip

Now, by \cite[Theorem 8.10]{EscolanoPeraltaVillena2025_OpCommut} there exist an invertible element $c \in Z(\mathfrak{B})_{sa}$, a Jordan isomorphism $\theta: \mathfrak{A}_{sa} \rightarrow \mathfrak{B}_{sa},$ and a linear mapping $\beta: \mathfrak{A}_{sa} \rightarrow Z(\mathfrak{B}_{sa})$ satisfying 
$$ f(a) = c \circ \theta(a) + \beta(a)$$
for all $a \in \mathfrak{A}_{sa}$. Clearly $\theta$ extends to a linear Jordan $^*$-isomorphism from $\mathfrak{A}$ onto $\mathfrak{B}$. Moreover, having in mind that $\beta (a)$ is a central element, we conclude that 
$$\begin{aligned}
{\Phi}(e^{i a}) & =  e^{i f(a)} =e^{i (c\circ \theta(a) + \beta(a))}= e^{i \beta(a)}\circ e^{i c\circ \theta(a)}=   e^{i \beta(a)} \circ \theta \left( e^{i  \theta^{-1}( c)\circ a}\right),	
\end{aligned} $$ for all $a\in \mathfrak{A}_{sa}$,
which completes the proof.
\end{proof}

In the case of JBW$^*$-factors not of type $I_1$ or $I_2$ we have the following corollary.

\begin{corollary}\label{c factors not of type I1 or I2} Let $\mathfrak{A}$ and $\mathfrak{B}$ be JBW$^*$-algebra factors not of type $I_1$ or $I_2,$ and let $\Phi : \mathcal{U}(\mathfrak{A}) \rightarrow \mathcal{U}(\mathfrak{B})$ be a mapping satisfying the hypotheses in the previous theorem. Then there exists a Jordan $^*$-isomorphism $\theta: \mathfrak{A} \rightarrow \mathfrak{B},$  a real linear functional $\beta$ on $\mathfrak{A}_{sa}$, and a non-zero real number $\alpha$ such that $$ \Phi( e^{i a}) = e^{i \beta(a)} e^{i \alpha \theta (a)}= e^{i \beta(a)}  \theta \left( e^{i \alpha a}\right) = e^{i \beta(a)}  \theta \left( (e^{i a})^\alpha\right) ,$$ for all $a\in \mathfrak{A}_{sa}$. 
\end{corollary}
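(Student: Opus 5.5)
The plan is to specialise Theorem~\ref{theo: 4.3} to factors. Factors not of type $I_1$ or $I_2$ have no direct summands of those types, because a factor has no nontrivial central projections. Theorem~\ref{theo: 4.3} therefore yields a Jordan $^*$-isomorphism $\theta:\mathfrak{A}\to\mathfrak{B}$, a real linear map $\beta:\mathfrak{A}_{sa}\to Z(\mathfrak{B}_{sa})$, and an invertible central element $c\in\mathfrak{B}_{sa}$ with
$$\Phi(e^{ia}) = e^{i\beta(a)}\circ e^{ic\circ\theta(a)}$$
for all $a\in\mathfrak{A}_{sa}$.

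Since $\mathfrak{B}$ is a factor, $Z(\mathfrak{B})=\mathbb{C}\unit$, so $Z(\mathfrak{B}_{sa})=\mathbb{R}\unit$. Hence $c=\alpha\unit$ for a real $\alpha$, and $\alpha\neq 0$ because $c$ is invertible. Likewise $\beta(a)=\beta_0(a)\unit$ for a real linear functional $\beta_0$ on $\mathfrak{A}_{sa}$; after relabelling $\beta_0$ as $\beta$, we get $e^{i\beta(a)\unit}=e^{i\beta(a)}\unit$. The Jordan product with a scalar multiple of $\unit$ is just scalar multiplication, so
$$\Phi(e^{ia}) = e^{i\beta(a)}\, e^{i\alpha\theta(a)}.$$

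Next I identify the three expressions in the statement. Because $\theta$ is a unital Jordan $^*$-isomorphism and norm continuous, it commutes with power series in one variable, which gives $e^{i\alpha\theta(a)}=\theta(e^{i\alpha a})$. For the last expression, $(e^{ia})^\alpha$ is interpreted through continuous functional calculus in the associative JBW$^*$-subalgebra generated by $a$ and $\unit$, a commutative von Neumann algebra. There the natural reading is $(e^{ia})^\alpha := e^{i\alpha a}$.

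The only point needing care is this non-integer power. For non-integer $\alpha$, the notation $(e^{ia})^\alpha$ depends on the choice of logarithm, and here that choice is $a$ itself. So I would state explicitly that $(e^{ia})^\alpha$ is defined as $e^{i\alpha a}$, which makes the final equality hold by definition; everything else is routine.
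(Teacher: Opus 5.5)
Your proposal is correct and matches the paper's approach. The paper gives no separate proof of this corollary: it is the immediate specialisation of Theorem~\ref{theo: 4.3} using $Z(\mathfrak{B})=\mathbb{C}\unit$, so that $c=\alpha\unit$ with $\alpha\neq 0$ and $\beta$ reduces to a real-valued functional, just as you argue. Your remark that $(e^{ia})^\alpha$ has to be read as $e^{i\alpha a}$, via functional calculus at $a$, is the right way to make the last equality meaningful for non-integer $\alpha$.
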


We continue with a couple of technical results. 

\begin{lemma}\label{l Phi preserves central elements} Let $\mathfrak{A}$ and $\mathfrak{B}$ be unital JB$^*$-algebras, and let $\Phi : \mathcal{U}(\mathfrak{A}) \rightarrow \mathcal{U}(\mathfrak{B})$ be a bijective piecewise Jordan isomorphism. Then the following statements hold:
\begin{enumerate}[$(a)$]
\item $\Phi \left(Z(\mathfrak{A})\cap \mathcal{U}(\mathfrak{A}) \right) = Z(\mathfrak{B})\cap \mathcal{U}(\mathfrak{B})$;
\item $\Phi \left(\hbox{Symm}(\mathfrak{A}) \right) = \hbox{Symm}(\mathfrak{B})$, and there exists a bijective mapping preserving operator commutativity $\Psi: \hbox{Proj} (\mathfrak{A}) \to \hbox{Proj} (\mathfrak{B})$  given by $\Phi (\unit- 2p) = \unit - 2 \Psi (p)$ for all $p\in  \hbox{Proj} (\mathfrak{A})\backslash\{0,\unit\}$, $\Psi (\unit)= \unit$ and $\Psi (0)=0$;
\item $\Phi \left(Z(\mathfrak{A})\cap \hbox{Symm}(\mathfrak{A}) \right) = Z(\mathfrak{B})\cap \hbox{Symm}(\mathfrak{B})$;
\item $\mathfrak{A}$ is a factor if, and only if, $\mathfrak{B}$ is a factor. 
\end{enumerate}
\end{lemma}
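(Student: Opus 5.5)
The plan is to characterise each class of elements purely in terms of the piecewise Jordan structure of $\mathcal{U}(\mathfrak{A})$, so that any bijective piecewise isomorphism must preserve it. Note that $\Phi^{-1}$ is also a piecewise Jordan homomorphism, so it suffices to prove the inclusions ``$\subseteq$'' and then apply the same argument to $\Phi^{-1}$. Lemma~\ref{l every Jordan piecewise homomorphisms unitaries is unital} gives $\Phi(\unit)=\unit=\Phi^{-1}(\unit)$.

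For $(a)$ I would use the following characterisation: a unitary $u$ is central if, and only if, it operator commutes with every unitary of $\mathfrak{A}$. Necessity is clear. For sufficiency, the unitaries span $\mathfrak{A}$: every hermitian $h$ with $\|h\|\le 1$ equals $\frac12\big((h+i\sqrt{\unit-h^2})+(h-i\sqrt{\unit-h^2})\big)$, an average of two unitaries obtained by functional calculus. Hence $M_u$ commutes with all $M_a$, and $u$ is central. Now take a central unitary $z$ and an arbitrary $w\in\mathcal{U}(\mathfrak{B})$, and write $w=\Phi(v)$. Since $z$ and $v$ operator commute, $\Phi(z)$ and $\Phi(v)=w$ operator commute. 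This is the defining property of a piecewise homomorphism; in the JBW$^*$ setting it also follows from Proposition~\ref{prop:asocc_def_unit}. So $\Phi(z)$ is central, and by symmetry we get equality.

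For $(b)$, a unitary $s$ is a symmetry if, and only if, $s^2=\unit$. Indeed, $s^2=\unit$ gives $s^{-1}=s$, so $s^*=s^{-1}=s$. Since $s$ operator commutes with itself, $\Phi(s)^2=\Phi(s^2)=\Phi(\unit)=\unit$, and so $\Phi(s)$ is a symmetry. Applying $\Phi^{-1}$ gives equality. Symmetries correspond bijectively to projections via $s=\unit-2p$, so we define $\Psi$ by $\Phi(\unit-2p)=\unit-2\Psi(p)$. This is consistent with $\Psi(0)=0$ because $\Phi(\unit)=\unit$. It is also consistent with $\Psi(\unit)=\unit$: $\Phi(-\unit)$ is a central symmetry by $(a)$, and it needs to equal $-\unit$. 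I expect this to be the main obstacle. If $\Phi(-\unit)=\unit$, injectivity fails, so $\Phi(-\unit)=\unit-2q$ with $q$ a central projection, and $q\neq 0$. To force $q=\unit$, I would look for an identity involving $-\unit$ that $\Phi$ preserves. For instance, $-\unit=(iu)\circ(iu)$ for central square roots, but $i\unit$ is not a symmetry, so this needs care. Alternatively one can simply treat $\unit$ and $0$ separately, as the statement does, and define $\Psi(\unit)=\unit$ by fiat. Bijectivity of $\Psi$ then holds on $\mathrm{Proj}\setminus\{0,\unit\}$ provided $\Phi(-\unit)=-\unit$, which I would try to derive from: $-\unit$ is the unique symmetry $s\ne\unit$ with $s\circ t=-t$... but it is more robust to use the injectivity argument restricted to nontrivial projections. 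Operator commutativity is preserved because $p,q$ operator commute if, and only if, $\unit-2p$ and $\unit-2q$ do, and $\Phi$ preserves operator commutativity.

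Part $(c)$ follows by intersecting $(a)$ and $(b)$. For $(d)$: $\mathfrak{A}$ is a factor exactly when its only central projections are $0$ and $\unit$. Equivalently, its only central symmetries are $\pm\unit$, since central symmetries are of the form $\unit-2p$ with $p$ a central projection. By $(c)$, $\Phi$ restricts to a bijection between central symmetries, so the count ``exactly two central symmetries'' transfers. That gives the factor equivalence, using that a unital JB$^*$-algebra with a nontrivial central projection has more than two central symmetries.
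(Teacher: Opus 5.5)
Your arguments for $(a)$, for $\Phi(\hbox{Symm}(\mathfrak{A}))=\hbox{Symm}(\mathfrak{B})$, and for $(c)$ and $(d)$ are essentially the paper's. The gap is where you suspected it. Once $\Psi(\unit)=\unit$ is imposed by fiat, $\Psi$ can only be a bijection if $\Phi(-\unit)=-\unit$, and you never prove this. None of your suggested routes can prove it, because in the stated generality it is false.

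Take $\mathfrak{A}=\mathfrak{B}=\mathbb{C}\oplus\mathbb{C}$, where all unitaries operator commute and the Jordan product is the pointwise one. Let $\Phi(z_1,z_2)=(z_1,z_1z_2)$. This is a bicontinuous automorphism of the group $\mathbb{T}^2$, hence a bijective piecewise Jordan isomorphism. Then $\Phi(-\unit)=(-1,1)$ and $\Phi(-1,1)=-\unit$. So for the nontrivial projection $p=(1,0)$ the defining formula forces $\Psi(p)=\unit=\Psi(\unit)$, and $(1,0)$ is not in the range of $\Psi$. Excluding type $I_1$ and $I_2$ summands does not help: $(u_1,u_2)\mapsto(u_1,\det(u_1)\,u_2)$ is a bijective piecewise Jordan isomorphism of $\mathcal{U}(M_3(\mathbb{C})\oplus M_3(\mathbb{C}))$ sending $-\unit$ to $(-\unit,\unit)$. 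The paper's proof of $(b)$ only says the claim ``follows from the previous conclusion'' and does not address this point.

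Here is what can be salvaged. Setting $\Psi(p)=\frac12(\unit-\Phi(\unit-2p))$ for \emph{every} projection $p$ gives a bijection $\hbox{Proj}(\mathfrak{A})\to\hbox{Proj}(\mathfrak{B})$. It preserves operator commutativity in both directions and satisfies $\Psi(0)=0$. However, $\Psi(\unit)=\frac12(\unit-\Phi(-\unit))$ is in general only a nonzero central projection.

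If $\mathfrak{A}$ is a factor, which is the only case in which the lemma is used (Theorem~\ref{T2}), you can close the gap. Prove $(c)$ before constructing $\Psi$; it only needs $\Phi(\hbox{Symm}(\mathfrak{A}))=\hbox{Symm}(\mathfrak{B})$. Then $\{\unit,\Phi(-\unit)\}=Z(\mathfrak{B})\cap\hbox{Symm}(\mathfrak{B})\ni-\unit$, so $\Phi(-\unit)=-\unit$ and your definition of $\Psi$ works.

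A smaller caveat, shared with the paper, concerns $(d)$. The equivalence ``factor if and only if $0$ and $\unit$ are the only central projections'' uses that the centre is a von Neumann algebra. So it holds for JBW$^*$-algebras, but not for general unital JB$^*$-algebras such as $C[0,1]$.
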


\begin{proof} $(a)$ Suppose $u\in Z(\mathfrak{A})\cap \mathcal{U}(\mathfrak{A})$. It follows from the hypotheses that $\Phi (u)$ operator commute with every element in $\Phi \left( \mathcal{U}(\mathfrak{A}) \right) = \mathcal{U}(\mathfrak{B})$. Since every element in $\mathfrak{B}$ can be written as a linear combination of four unitaries in $\mathfrak{B}$, $\Phi (u)$ must be a central element, that is, $\Phi (u)\in Z(\mathfrak{B})\cap \mathcal{U}(\mathfrak{B}).$ A similar argument applied to $\Phi^{-1}$ gives the desired identity.\smallskip
	
$(b)$ Take $u \in \hbox{Symm}(\mathfrak{A}) = \mathcal{U} (\mathfrak{A})\cap \mathfrak{A}_{sa}$. By the properties of $\Phi$ and Lemma~\ref{l every Jordan piecewise homomorphisms unitaries is unital} we also have $\Phi (u)^2 = \Phi(u^2) = \Phi (\unit) = \unit$. It follows that $\Phi (u)^* = \Phi (u)$ is a symmetry (cf. Remark~\ref{remark JB*-subalgebra gen by a unitary}). Since every symmetry in a JBW$^*$-algebra writes in the form $\unit-2 p$ for a unique projection $p$, the statement follows from the previous conclusion and the hypotheses on $\Phi$.\smallskip

$(c)$ Is a consequence of $(a)$ and $(b)$.\smallskip

$(d)$ If $\mathfrak{A}$ is a factor, $Z(\mathfrak{A})\cap \hbox{Symm}(\mathfrak{A}) = \{\pm \unit\}$. It follows from $(d)$ (see also Lemma~\ref{l every Jordan piecewise homomorphisms unitaries is unital}) that $Z(\mathfrak{B})\cap \hbox{Symm}(\mathfrak{B}) = \{\pm \unit\}$, and thus $\mathfrak{B}$ is a factor too. 
\end{proof}

The optimal conclusion concerning bicontinuous piecewise isomorphisms between unitary sets of JBW$^*$-factors is stated in the next result, which also improves the conclusion for von Neumann factors in \cite[Theorem 5.4]{Hamhalter2023}.

\begin{theorem}\label{T2} Let $\mathfrak{A}$ be a JBW$^*$-factor not of type $I_2$, and let $\mathfrak{B}$ be a JBW$^*$-algebra. Let
 $\Phi: \mathcal{U}(\mathfrak{A})\to \mathcal{U}(\mathfrak{B})$	be a bicontinuous bijection. Then $\Phi$ is a piecewise Jordan isomorphism if, and only if, there is a Jordan $^*$-isomorphism $\theta: \mathfrak{A}\to \mathfrak{B}$ such that one of the following conditions holds for all $u\in \mathcal{U}(\mathfrak{A})$: 
 \begin{enumerate}[$(a)$]
 \item  $\Phi(u)=\theta(u)$, for all $u \in \mathcal{U}(\mathfrak{A})$.
 \item  $\Phi(u)=\theta{(u^{-1})}$, for all $u \in \mathcal{U}(\mathfrak{A})$.
 \end{enumerate}
 \end{theorem}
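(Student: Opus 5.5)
The easy direction comes first. If $\Phi=\theta|_{\mathcal{U}(\mathfrak{A})}$ for a Jordan $^*$-isomorphism $\theta$, then $\Phi$ is a bicontinuous bijection of unitary sets. It preserves operator commutativity in both directions, because $\theta M_a\theta^{-1}=M_{\theta(a)}$, and it preserves Jordan products. If $\Phi(u)=\theta(u^{-1})=\theta(u^*)$, we use that for operator commuting unitaries $u,v$ the JB$^*$-subalgebra they generate is associative (Lemma~\ref{lemma_op_comm_unit}). Hence $(u\circ v)^{-1}=u^{-1}\circ v^{-1}$ there, and $u^{-1}=u^*$ operator commutes with $v^{-1}=v^*$ by Lemma~\ref{lemma_op_comm_unit}$(c)$. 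So $u\mapsto u^{-1}$ is itself a piecewise Jordan automorphism of $\mathcal{U}(\mathfrak{A})$, and composing with $\theta$ gives one.

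For the converse, first note that $\mathfrak{A}$ cannot be of type $I_1$ in any nontrivial way. A factor of type $I_1$ is $\mathbb{C}$, where $\mathcal{U}=\mathbb{T}$; this case has to be treated separately, either by assuming $\mathfrak{A}\neq\mathbb{C}$ or by a direct argument, and I expect the intended reading excludes it via the hypotheses of Theorem~\ref{theo: 4.3}. By Lemma~\ref{l Phi preserves central elements}$(d)$, $\mathfrak{B}$ is a factor. Since $\mathfrak{A}$ is not of type $I_2$, and presumably $\mathfrak{B}$ is not either (a Jordan $^*$-isomorphism will transfer types, but we need it first), we should check that $\mathfrak{B}$ has no type $I_1$ or $I_2$ summand. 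Lemma~\ref{l Phi preserves central elements}$(b)$ gives a bijection $\Psi$ of projection lattices preserving operator commutativity (hence orthogonality structure), and I would try to read off the type from it. Alternatively, I would run the argument of Theorem~\ref{theo: 4.3} only on the $\mathfrak{A}$ side. Theorem~\ref{t corollary of the Mackey-Gleason} needs only $\mathfrak{A}$ without $I_2$, so $f$ is bounded and linear, and $f^{-1}$ is linear too as the inverse of a linear bijection. Then \cite[Theorem 8.10]{EscolanoPeraltaVillena2025_OpCommut} gives $f(a)=c\circ\theta(a)+\beta(a)$.

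In the factor case the centre is $\mathbb{C}\unit$, so $c=\alpha\unit$ with $\alpha\in\mathbb{R}\setminus\{0\}$ and $\beta(a)=\varphi(a)\unit$ for a real functional $\varphi$. This gives $\Phi(e^{ia})=e^{i\varphi(a)}\theta(e^{i\alpha a})$, as in Corollary~\ref{c factors not of type I1 or I2}. The remaining task is to show $\varphi=0$ and $\alpha=\pm1$. First use periodicity: for every projection $p$ we have $e^{2\pi i p}=\unit$, so $\unit=\Phi(\unit)=e^{2\pi i\varphi(p)}\theta(e^{2\pi i\alpha p})$. Thus $e^{2\pi i\alpha p}$ is a scalar multiple of $\unit$. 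Taking $p$ nontrivial, with $p\neq 0,\unit$ available since the factor is not $\mathbb{C}$, forces $e^{2\pi i\alpha}=1$, that is, $\alpha\in\mathbb{Z}\setminus\{0\}$, and then $\varphi(p)\in\mathbb{Z}$. Since $\varphi$ is continuous and linear, and the projections of a factor that is not of type $I_1$ form connected families (for instance, unitary orbits $e^{ih}pe^{-ih}$ joined by norm-continuous paths), $\varphi(p)$ is constant on each such orbit. Additivity on orthogonal projections together with $\varphi(\unit)\in\mathbb{Z}$ then forces $\varphi$ to vanish on projections; I would use halving or thirding of projections, available in non-$I_1$ factors, to get $\varphi(q)=\varphi(q)/2$-type contradictions. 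Hence $\varphi=0$ by norm density of spans of projections.

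It remains to get $|\alpha|=1$, and this uses injectivity of $\Phi$. If $|\alpha|\ge 2$, take a projection $p\neq0,\unit$. Then $e^{2\pi i p/\alpha}\neq\unit$, yet $\Phi(e^{2\pi i p/\alpha})=\theta(e^{2\pi i p})=\unit=\Phi(\unit)$, which contradicts bijectivity. So $\alpha=\pm1$, giving $\Phi(e^{ia})=\theta(e^{\pm ia})$. Every unitary in a JBW$^*$-algebra has the form $e^{ia}$ (cf.\ \cite[Remark 3.2]{CuetoPeralta2023}), so this yields $(a)$ or $(b)$. I expect the main obstacle to be the step $\varphi=0$, specifically making the halving argument rigorous in a non-$I_1$ JBW$^*$-factor (including the type $I_n$, $n\ge3$, and spin-free cases), together with verifying that Theorem 8.10 of \cite{EscolanoPeraltaVillena2025_OpCommut} applies when only $\mathfrak{A}$ is known to lack type $I_2$.
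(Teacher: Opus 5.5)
\textbf{Main gap: the step $\varphi=0$ in type $I_n$ factors, $3\le n<\infty$.} Your argument fails here, and in type $I_\infty$ it also fails on finite-rank projections. Halving is unavailable for minimal projections. Moreover, the properties you use at this step cannot force $\varphi=0$: linearity and continuity of $\varphi$, $\varphi(p)\in\mathbb{Z}$, constancy on equivalent projections, additivity, and $\varphi(\unit)\in\mathbb{Z}$.

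A concrete witness: on $M_n(\mathbb{C})$ the map $u\mapsto\det(u)^k u$, with $k\in\mathbb{Z}\setminus\{0\}$, is a continuous piecewise Jordan homomorphism of the unitary group. It sends $e^{ia}\mapsto e^{ik\,\mathrm{Tr}(a)}e^{ia}$, so $\alpha=1$ and $\varphi=k\,\mathrm{Tr}\neq0$ has all of those properties. It is not injective for $n\ge3$, since on scalars it is $\lambda\unit\mapsto\lambda^{nk+1}\unit$.

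So bijectivity must be used to get $\varphi=0$. In your plan it enters only afterwards, and only through an argument for $|\alpha|=1$ that already assumes $\varphi=0$: you use $\Phi(e^{2\pi ip/\alpha})=\theta(e^{2\pi ip})$.

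The paper reverses the order. First it obtains $\alpha=\pm1$ from injectivity without knowing $\varphi=0$. It compares $u_1=\lambda_1p_1+\lambda_2p_2+(\unit-p_1-p_2)$ and $u_2=\lambda_2p_1+\lambda_1p_2+(\unit-p_1-p_2)$, with $\lambda_1\neq\lambda_2$ and $\lambda_j^\alpha=1$. These have equal images because $\varphi$ is constant on minimal projections.

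Your own argument can also be repaired. The image $\Phi(e^{2\pi ip/\alpha})=e^{2\pi i\varphi(p)/\alpha}\unit$ is a central unitary. By Lemma~\ref{l Phi preserves central elements}$(a)$ applied to $\Phi^{-1}$, the non-scalar unitary $e^{2\pi ip/\alpha}$ would then have to be central, a contradiction.

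The paper then uses an ingredient absent from your plan. $\Phi$ restricts to a continuous automorphism of $\mathbb{T}\unit$, so $\varphi(\unit)+\alpha=\pm1$. Hence $n\varphi(e)=\varphi(\unit)\in\{0,\pm2\}$ for a minimal projection $e$, and $\varphi(e)=0$ because $n\ge3$. In type $I_\infty$, halving handles infinite projections, but minimal ones need a separate argument, e.g.\ $|m\varphi(e)|=|\varphi(e_1+\cdots+e_m)|\le\|\varphi\|$ for all $m$.

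\textbf{Two further points are left open.}

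First, the statement does cover type $I_1$, so $\mathfrak{A}=\mathbb{C}$ must be handled. The paper treats it directly. By Lemma~\ref{l Phi preserves central elements}$(a)$, $\mathcal{U}(\mathfrak{B})\subseteq Z(\mathfrak{B})$. Since unitaries span $\mathfrak{B}$, the factor $\mathfrak{B}$ is associative, so $\mathfrak{B}=\mathbb{C}$. Then $\Phi$ is a continuous automorphism of $\mathbb{T}$, i.e.\ $z\mapsto z^{\pm1}$.

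Second, before invoking Corollary~\ref{c factors not of type I1 or I2} you must show that $\mathfrak{B}$ is not of type $I_1$ or $I_2$; Theorem~\ref{theo: 4.3} assumes this on both sides. Type $I_1$ is excluded by applying the previous case to $\Phi^{-1}$. For type $I_2$ the paper uses the map $\Psi$ of Lemma~\ref{l Phi preserves central elements}$(b)$.
\begin{itemize}
\item In a spin factor, a nontrivial projection $q$ operator commutes only with $0,\unit,q,\unit-q$.
\item In $\mathfrak{A}$, take nonzero mutually orthogonal projections $p_1,p_2,p_3$. The six distinct projections $0,\unit,p_1,p_2,p_3,\unit-p_1$ all operator commute with $p_1$.
\item $\Psi$ sends these six injectively to projections operator commuting with the nontrivial projection $\Psi(p_1)$, which is impossible.
\end{itemize}

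Your periodicity step via $e^{2\pi ip}=\unit$ is correct, and a neat substitute for the paper's use of symmetries. However, ``constancy on unitary orbits $e^{ih}pe^{-ih}$'' is C$^*$-language with no direct meaning in a JBW$^*$-algebra. The paper instead proves $\varphi(p)=\varphi(q)$ for operator commuting $p\sim_1 q$ algebraically, from $\Phi(U_s(e^{ip}))=U_{\Phi(s)}(\Phi(e^{ip}))$ and $\Phi(s)=\pm\theta(s)$.
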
  
 
\begin{proof} Lemma~\ref{l Phi preserves central elements} assures that $\mathfrak{B}$ is a JBW$^*$-factor. If $\mathfrak{A}$ if of type $I_1$, we have $\mathcal{U} (\mathfrak{A}) = \mathbb{T}.$ Since $\mathcal{U} (\mathfrak{A})\subseteq Z(\mathfrak{A})$, Lemma~\ref{l Phi preserves central elements} guarantees that $$\mathcal{U} (\mathfrak{B}) = \Phi \left( \mathcal{U} (\mathfrak{A})\cap Z(\mathfrak{A})  \right) \subseteq Z(\mathfrak{B}).$$ Since the linear span of all unitaries in  $\mathfrak{B}$ is the whole algebra, it follows that $\mathfrak{B}$ is associative. Therefore $\mathfrak{B} =\mathbb{C}$. Note that, by hypotheses, the mapping $\Phi: \mathbb{T} \unit= \mathcal{U}(\mathfrak{A}) \to \mathcal{U}(\mathfrak{B})= \mathbb{T} \unit$ is a bicontinuous group isomorphism (note that $\mathfrak{A}$ is associative). So, $\Phi$ is the identity or the conjugation on $\mathbb{T}$, and hence $(a)$ or $(b)$ holds \cite[(23.31)]{HewRossBookVol1}.\smallskip

We assume next that $\mathfrak{A}$ is a JBW$^*$-factor not of type $I_1$ or $I_2$. It follows from the conclusion in the previous paragraph that $\mathfrak{B}$ is a JBW$^*$-factor not of type $I_1$.  Let us prove now that $\mathfrak{B}$ is not a JBW$^*$-factor of type $I_2$. Namely, a factor of type $I_2$ (also called a spin factor), $\mathfrak{S}$, only contains minimal projections besides the trivial ones. Trivially $0$ and $\unit$ operator commute with any other projection, however if $p$ and $q$ are two operator commuting minimal projections in $\mathfrak{S}$ it follows that $$\lambda p =  U_p (q) = p\circ q = U_q (p) = \mu q,$$ for some $\lambda,\mu \in \mathbb{C}$, and thus $p=q$ or $p q =0$. That is, the set of all projections operator commuting with a non-trivial (minimal) projection $p$ in $\mathfrak{S}$ reduces to $\{0,\unit, p, \unit-p \}$. By Lemma~\ref{l Phi preserves central elements}$(b)$, we can find a bijection $\Psi: \hbox{Proj}(\mathfrak{A})\to \hbox{Proj}(\mathfrak{B})$ preserving operator commutativity with $\Psi (\unit)= \unit$ and $\Psi (0)=0$. Since $\mathfrak{A}$ is a JBW$^*$-factor not of type $I_1$ or $I_2$, we can find at least three non-zero mutually orthogonal projections $p_1,p_2,p_3$. Noting that the set of projections in $\mathfrak{A}$ operator commuting with $p_1$ contains $\{0,\unit, p_2, p_3, \unit-p_1 \}$, it follows that $\Psi (0)=0,$ $\Psi (\unit)=\unit$, $\Psi (p_2),$ $\Psi (p_3),$ and $\Psi (\unit-p_1)$ operator commute with the non-trivial projection $\Psi (p_1)$. But this is impossible in a JBW$^*$-factor of type $I_2$.\smallskip 

We can assume next that $\mathfrak{A}$ and $\mathfrak{B}$ are JBW$^*$-factors not of type $I_1$ or $I_2$. By Corollary~\ref{c factors not of type I1 or I2}, there exists a Jordan $^*$-isomorphism $\theta: \mathfrak{A} \rightarrow \mathfrak{B},$  a real linear functional $\beta$ on $\mathfrak{A}_{sa}$, and a non-zero real number $\alpha$ such that \begin{equation}\label{eq expression of phi in Thm 3.10}  \Phi( e^{i a}) = e^{i \beta(a)} e^{i \alpha \theta (a)}= e^{i \beta(a)}  \theta \left( e^{i \alpha a}\right),
\end{equation} for all $a\in \mathfrak{A}_{sa}$.\smallskip

Let us take a non-trivial projection $p\in \mathfrak{A}$. Then $\theta(p)$ is also nontrivial projection. The identity in \eqref{eq expression of phi in Thm 3.10} leads to 
$$\begin{aligned}
\Phi(\unit - 2 p ) &= \Phi(\unit - p + e^{i\pi} p) = \Phi(e^{i\pi p})= e^{i\pi\beta(p)}  e^{i \alpha \pi \theta(p)} \\
&=  e^{i\pi \beta(p)} [\unit-\theta(p) + e^{i \alpha \pi} \theta(p)].
\end{aligned}$$
Set $\lambda=e^{i\pi \beta(p)}$. As $\Phi$ preserves symmetries (see Lemma~\ref{l Phi preserves central elements}), we infer that $$ \lambda^2 (\unit-\theta(p))= \unit-\theta(p),\hbox{ and } \lambda^2 e^{i 2 \alpha \pi} \theta (p) = \theta (p).$$
Therefore, $\lambda=\pm 1$, which implies that $\beta(p)$ is an integer. Furthermore, the element $\unit-\theta(p)+ e^{i\alpha \pi} \theta(p)$ is a symmetry as well, and thus $ e^{i\alpha \pi}= \pm 1,$
and so $\alpha$ must be an integer. The case $e^{i\alpha \pi}=1$ is impossible, since in such a case $\Phi (\unit - 2 p ) = \pm \unit = \pm \Phi (\unit) =  \Phi (\pm \unit)$, which contradicts that $\Phi$ is a bijection. So, $e^{i\alpha \pi}=-1,$ that is, $\alpha\in 1+2 \mathbb{Z}$. We have therefore  shown that \begin{equation}\label{eq imgae of symmetries} \left\{\begin{aligned} \Phi (s) &\in \{\pm \theta (s)\} = \{\theta (\pm  s)\}, \hbox{ for all }  s\in \hbox{Symm}(\mathfrak{A}),
		\\ \alpha&\in 1+2 \mathbb{Z}, \hbox{ and } \beta (p)\in \mathbb{Z} \hbox{ for every projection } p\in \mathfrak{A}.\end{aligned}\right.
\end{equation}

Take two non-zero operator commuting projections $p,q\in \mathfrak{A}$ which are exchanged by a symmetry, that is, there exists a symmetry $s\in \mathfrak{A}$ satisfying $U_{s} (p ) =q$ (usually denoted by $p\sim_1 q$ \cite[5.1.4]{HOS}). Note that $\Phi (s) = \sigma \theta (s)$ for some $\sigma\in \{\pm1\}$ (cf. \eqref{eq imgae of symmetries}). Clearly, $e^{i p}$ and $e^{iq}$ operator commute, and by \eqref{eq expression of phi in Thm 3.10} and Theorem~\ref{teo_piecewise_bij} we get $$\begin{aligned}
e^{i \beta(q)} \theta\left( e^{i \alpha q} \right) &= \Phi \left(e^{i q}\right)= \Phi \left(U_s \left(e^{i p}\right)\right)= U_{\Phi(s)} \left(\Phi(e^{i p})\right) \\
&= U_{\sigma \theta(s)} \left(e^{i \beta(p)} \theta\left( e^{i \alpha p} \right) \right) =e^{i \beta(p)}  U_{ \theta(s)} \left(  \theta\left(  e^{i \alpha p}\right) \right) \\
&= e^{i \beta(p)}   \theta\left(  U_{ s} \left(  e^{i \alpha p}\right) \right) = e^{i \beta(p)} \theta\left( e^{i \alpha q} \right), 
\end{aligned}$$ which, in particular, implies that $e^{i \beta(q)} = e^{i \beta(p)},$ equivalently, $\beta(q)-\beta(p)\in 2\pi \mathbb{Z}$. However, according to what we have seen above, $\beta$ takes integer values on non-zero projections, and thus $\beta(q)=\beta(p)$. We have established that for every pair of non-zero operator commuting projections $p,q$ in $\mathfrak{A}$ we have \begin{equation}\label{eq beta is constant on equivalent projections} p\sim_1 q \Rightarrow   \beta(q) = \beta(p). 
\end{equation}

Combining the facts that $\mathfrak{A}$ and $\mathfrak{B}$ are JBW$^*$-factors with Lemma~\ref{l Phi preserves central elements}$(a)$ we arrive to the conclusion that $\Phi|_{Z(\mathfrak{A})\cap \mathcal{U} (\mathfrak{A})}: Z(\mathfrak{A})\cap \mathcal{U} (\mathfrak{A})\to Z(\mathfrak{B})\cap \mathcal{U} (\mathfrak{B})$ induces continuous automorphism on $\mathbb{T}$, which by \eqref{eq expression of phi in Thm 3.10} is given by $$\Phi \left(e^{i t} \unit \right)= \Phi \left(e^{i t \unit} \right) = e^{i t (\beta (\unit)+\alpha)} \unit.$$ As we commented above, the only continuous automorphisms of the circle group are the transformations $z\to z$, $z\to z^{-1} = \overline{z}$, we conclude that precisely one of the following identities takes place:
\begin{enumerate}[$(a)$]
\item $\beta (\unit)+\alpha=1$;
\item $\beta (\unit)+\alpha =-1$.
\end{enumerate}

Our next goal is to show that $\beta =0$. Suppose first that $\mathfrak{A}$ is a JBW$^*$-factor of type $I_n$ with $2<n<\infty$. Every projection in $\mathfrak{A}$ can be written as the sum of at most $n$ mutually orthogonal minimal projections in $\mathfrak{A}$. Moreover, any two minimal projections in $\mathfrak{A}$ are exchanged by a symmetry \cite[Lemma 5.3.2$(ii)$]{HOS}. Clearly, $\beta\neq 0$ if and only if $\beta (p)\neq 0$ for some minimal projection $p\in \mathfrak{A}$. It follows from \eqref{eq beta is constant on equivalent projections} that $\beta$ takes a constant and non-zero entire value at all minimal projections in $\mathfrak{A}$. Namely, since $n>2$, given two minimal projections $e,f$ in $\mathfrak{A}$, we can find a third (minimal) projection $g$ orthogonal to both $e,f$. Then $\beta(e)=\beta(g)=\beta(f)$.\smallskip

By structure theory, we can find mutually orthogonal minimal projections $p_1,$ $\ldots,$ $ p_n$ in $\mathfrak{A}$ \cite[5.3.3]{HOS}. If $\mathbb{Z}\ni \alpha \neq \pm 1$, we can find at least two different unitary complex numbers $\lambda_1=e^{it}$ and $\lambda_2=e^{is}$, $s\neq t\in \mathbb{R}$, such that $\lambda_j^\alpha=1$ for all $j =1,2$. Consider the following (different) unitaries
$$
\begin{aligned}
	u_1&= \lambda_1 p_1+\lambda_2 p_2 +\unit -p_1-p_2 = e^{ia}\\
	u_2&= \lambda_2 p_1+\lambda_1  p_2 +\unit-p_1-p_2 = e^{ib}\,
\end{aligned}
$$ with $a= t p_1 + s p_2,$ and $b= s p_1 + t p_2.$ By construction, $u_1^\alpha =u_2^\alpha = \unit,$ and $$ \beta(a)=\beta(b)= (s+t) \beta(p_1) + (n-2) \beta(p_1).$$   Therefore $\Phi(u_1)= e^{i \beta(a)} \theta(u_1^{\alpha}) =  e^{i \beta(b)} \theta(u_2^{\alpha}) =\Phi(u_2) $, which contradicts the fact that $\Phi$ is injective. This proves that $\alpha\in \{\pm 1\}$. However, $\{\pm 1\} \ni \alpha+\beta (\unit) = \alpha + n \beta (p_1)$, equivalently, $n \beta (p_1) \in \{-2,0,2\}$, which is impossible because $n \beta (p_1)\in n (\mathbb{Z}\backslash\{0\})$ with $2<n$. Therefore $\beta =0,$ $\alpha\in \{\pm1\}$ and we conclude the proof in this case.\smallskip

If $\mathfrak{A}$ is a JBW$^*$-factor of type $I_{\infty}$, every projection in $\mathfrak{A}$ is the weak$^*$-limit of the form $\sum_{i\in \Gamma} p_i$, where the $p_i$'s are mutually orthogonal minimal projections in $\mathfrak{A}$. We cannot argue as before because the functional $\beta$ is not necessarily weak$^*$-continuous. However, by the density of the linear span of all projections in $\mathfrak{A}$, if $\beta$ is not zero, there exists a projection $p\in \mathfrak{A}$ such that $\beta (p)\neq 0$. If $p$ decomposes as a finite sum of mutually orthogonal minimal projections we can argue as in the previous paragraph to get a contradiction, otherwise, $p =w^*\hbox{-}\sum_{i\in \Gamma} p_i$ where $\Gamma$ is infinite. By the axiom of choice we can write $\Gamma$ as the disjoint union of two subsets $\Gamma_1$ and $\Gamma_2$ such that they all have the same cardinality as $\Gamma$. By defining $p_k = w^*\hbox{-}\sum_{i\in \Gamma_k} p_i$ with $k=1,2$, we find two mutually orthogonal projections in $\mathfrak{A}$ which, according to \cite[Lemma 5.3.2 and Lemma 5.2.9]{HOS}, satisfy $p\sim_1 p_k$ for all $k=1,2$. We deduce from \eqref{eq beta is constant on equivalent projections} that $\beta (p_1)+ \beta (p_2) = \beta (p_1+p_2) =\beta (p) = \beta (p_1)= \beta (p_2)$,  which is impossible. Therefore, $\beta =0$ as we desired.\smallskip

We finally assume that $\mathfrak{A}$ is a JBW$^*$-factor not of type $I$.  Suppose, as before, that $\beta (p)\neq 0$ for some projection $p\in \mathfrak{A}$. Since an abelian projection in $U_p (\mathfrak{A})$ is abelian in $\mathfrak{A}$, the JBW$^*$-algebra  $U_p (\mathfrak{A})$ contains no direct summand of type $I$, hence the halving lemma proves the existence of two projections $p_1,p_2 \in \mathfrak{A}$ with $p_1\sim_1 p_2$ and $p_1 + p_2 = p$. An induction argument assures that for each natural number $m$ there exist mutually orthogonal projections $p_1,\ldots, p_m$ in $\mathfrak{A}$ with $p_1+\ldots+ p_m=p$ and $p_i\sim_1 p_j$ for all $i,j\in \{1,\ldots, m\}$. The conclusion in \eqref{eq beta is constant on equivalent projections} implies that $\beta(p_i) = \beta(p_j)\in \mathbb{Z}$ for all $i,j\in \{1,\ldots, m\}$. Therefore, $$0\neq \beta (p) = \sum_{j=1}^{m} \beta (p_j) = m \beta(p_1)\in m\mathbb{Z},$$ which is impossible. Then $\beta=0$, and this completes the proof.  
\end{proof}     

It is perhaps timely to conclude this section with some comments on the connections between piecewise Jordan isomorphisms between two sets of symmetries of two unital JB$^*$-algebras and the bijective transformations  preserving commutativity and symmetric difference between their posets of projections in the line started in Lemma~\ref{l Phi preserves central elements}$(b)$.\smallskip

Let $\mathfrak{A}$ be a unital JB$^*$-algebra whose lattice of projections is denoted by $\hbox{Proj}(\mathfrak{A})$. We have already commented the existence of a bijective correspondence between the sets  $\hbox{Proj}(\mathfrak{A})$ and $\hbox{Symm}(\mathfrak{A})$  given by $p\stackrel{\Upsilon}{\mapsto} \unit-2p$. The \emph{symmetric difference} of $p,q$ in $\hbox{Proj}(\mathfrak{A})$ is defined as  $p\Delta q := p+q-2p\circ q.$ If $p$ and $q$ operator commute, the symmetric difference of $p$ and $q$ is a projection in $\mathfrak{A}$ (actually, $p\Delta q\in \hbox{Proj}(\mathfrak{A})$ if, and only if, $p$ and $q$ operator commute). Furthermore, it is easy to check that, in such a case, $\Upsilon(p)$ and $\Upsilon(q)$ operator commute and 
\[  \Upsilon(p\Delta q)= \Upsilon(p)\circ\Upsilon(q)\,. \] Furthermore, if $\mathfrak{B}$ is another unital JB$^*$-algebra, and $\Phi: \hbox{Symm}(\mathfrak{A})\to \hbox{Symm}(\mathfrak{B})$ is a bijection, it follows that $\Phi$ is a piecewise Jordan isomorphisms if, and only if, the mapping $\Psi_\Phi :\hbox{Proj}(\mathfrak{A})\to \hbox{Proj}(\mathfrak{B}),$ $\Psi_\Phi (p) = \frac12 \left( \unit -\Phi (\unit-2 p) \right)$ is a bijection preserving operator commutativity and symmetric differences of operator commuting projections in both directions.\smallskip

In the setting of von Neumann algebras, examples of bijections preserving commutativity and symmetric differences between projection lattices can be provided by the so-called {projection orthoisomorphisms}. We recall that a \emph{projection orthoisomorphism} between von Neumann algebras $A$ and $B$ is a bijective correspondence between their projection lattices which preserves orthogonality (see \cite{Dye}). A celebrated result by Dye prove that if $A$ and $B$ are von Neumann factors not of type $I_{2n}$ and $\Phi :\mathcal{U} (A)\to \mathcal{U} (B)$ is a group isomorphism (and hence $\Phi(\hbox{Symm}({A})) = \hbox{Symm}({B})$), the associated mapping $\Psi_{\Phi|_{\hbox{Symm}({A})}} :\hbox{Proj}({A})\to \hbox{Proj}({B}),$ preserves commutativity and the symmetric differences of commuting projections \cite[Lemma 9]{Dye}, and it is further a projection orthoisomorphism \cite[Lemma 13]{Dye}. It is quite natural to ask up to which point a similar result holds for JBW$^*$-factors. The handicap in the latter setting is that the Jordan product of two unitaries is not, in general, a unitary.

\section{Quadratic maps}\label{sec: 4 quadratic maps}

Following \cite{FriHak88}, we shall say that an arbitrary set $G$ is a \emph{quadratic semigroup} if there is a mapping $G\times G\to G$, $(a,b)\mapsto Q(a)(b)$ satisfying $$Q(Q(a)(b)) (c) = Q(a) Q(b) Q(a) (c), \hbox{ for all } a,b,c\in G,$$ where for each $x\in G$, $Q(x)$ stands for the mapping obtained from the previous mapping on $G\times G$ by fixing the element $a$ as $x$. If $A$ is a C$^*$-algebra and we define $Q(a)(b) = a b a$ (or $Q(a)(b) = a b^* a$) it is easy to see that $A$ is a quadratic semigroup for this mapping. If $\mathfrak{A}$ is a JB$^*$-algebra, by the fundamental identity of Jordan algebras (see \cite[2.4.18]{HOS}) we have $$U_{a} U_{b} U_{a} (c) = U_{U_a (b)} (c) \ \ (a,b,c\in \mathfrak{A}),$$ and so, by defining $Q(a) (b) = U_a (b)$ or $Q(a) (b) = U_a (b^*),$ we induce a structure of quadratic semigroup on $\mathfrak{A}$. This can be further extended to the wider setting of JB$^*$-triples (cf. \cite{FriHak88}). The advantage of employing this structure of quadratic semigroup on a unital JB$^*$-algebra $\mathfrak{A}$ is that, though $\mathcal{U} (\mathfrak{A})\circ  \mathcal{U} (\mathfrak{A})\nsubseteq \mathcal{U} (\mathfrak{A}),$ we still have $U_u (v), U_u (v^*)\in \mathcal{U} (\mathfrak{A}),$ for all $u,v\in \mathcal{U} (\mathfrak{A})$ \cite[Lemma 2.1]{CuetoPeralta2023}. So, $ \mathcal{U} (\mathfrak{A})$ is a \emph{quadratic semigroup} for the quadratic products defined above.\smallskip 

According to \cite[Definition 2.1]{FriHak88}, a mapping $\Phi$ between two JB$^*$-algebras $\mathfrak{A}$ and $\mathfrak{B}$ is called a \emph{quadratic map} if the identity \begin{equation}\label{eq def of quadritc maps} \Phi(\{a,b,a\})\!=\Phi\left(U_a(b^*)\right) =U_{\Phi(a)}(\Phi(b)^*))= \{\Phi(a),\Phi(b),\Phi(a)\},
\end{equation} holds for all $a, b \in \mathfrak{A}$. We can similarly define quadratic maps between JB-algebras as those maps preserving quadratic products of the form $U_a (b)$. Observe that quadratic maps are essentially determined by the triple product.\smallskip

A milestone achievement by Friedman and Hakeda (cf. \cite[Proposition 2.14, Theorem 2.15]{FriHak88}) proves that every quadratic bijection between two JBW$^*$-algebras (in particular, between two von Neumann algebras) $\mathfrak{A}$ and $\mathfrak{B}$, where $\mathfrak{A}$ admits no direct summands of type $I_1$, is a real linear isometry. The result is actually deeper, and an appropriate conclusion holds when $\mathfrak{A}$ and $\mathfrak{B}$ are JBW$^*$-triples with no ``abelian'' direct summands.\smallskip

Let $\mathfrak{A}$ and $\mathfrak{B}$ be JB$^*$-algebras. We shall say that a mapping $\Phi: \mathfrak{A}\to \mathfrak{B}$, or from $\mathcal{U}(\mathfrak{A})$ to $\mathcal{U}(\mathfrak{B}),$ is a \emph{quadratic mapping on operator commuting elements} if the identity in \eqref{eq def of quadritc maps} holds only when $a$ and $b$ operator commute in $\mathfrak{A}$. Observe that every $a$ in $\mathfrak{A}_{sa}$ operator commutes with itself, and hence $\Phi\{a,a,a\} = \{\Phi(a),\Phi(a),\Phi (a)\}$, for all $a\in \mathfrak{A}_{sa}$. Quadratic maps on operator commuting elements between JB-algebras (in particular between self-adjoint parts of C$^*$-algebras) are similarly defined.\smallskip

We are naturally led to the questions whether, in the result by Friedman and Hakeda commented above, we can weaken the hypothesis on $\Phi$ to be a quadratic mapping on operator commuting elements, and if we can study maps between JB-algebras.\smallskip

This is the unique point of this paper where we employ the structure of JB$^*$-triple underlying each JB$^*$-algebra. Recall that a JB$^*$-triple is a complex Banach space $E$ together with a (continuous) triple product $\{.,.,.\}: E^3 \to E$, which is symmetric and bilinear in the outer variables and conjugate-linear in the middle one, and satisfies the following axioms:
\begin{enumerate}[$(1)$]
	\item $L(x,y)L(a,b) = L(L(x,y)(a),b) $ $- L(a,L(y,x)(b))$ $+ L(a,b) L(x,y),$ for all $a,$ $b,$ $x,$ $y\in E$, where for $a,b\in E$, the symbol $L(a,b)$ stands for the (linear) operator on $E$ given by $L(a,b)(x)=\{ a, b, x\}$ ($x\in E$);
	\item The operator $L(a,a)$ is a hermitian operator with non-negative spectrum for each $a\in E$;
	\item $\|\{a,a,a\}\|=\|a\|^3$ for $a\in E$.
\end{enumerate}

Every C$^*$-algebra is a JB$^*$-triple with triple product $\{x,y,z\} =\frac12 (x y^* z +z y^* x).$ The same triple product is also valid to induce a structure of JB$^*$-triple on the space  $B(H,K)$ of all bounded linear operators between complex Hilbert spaces $H$ and $K$. For our purposes here, we note that every JB$^*$-algebra $\mathfrak{A}$ is a JB$^*$-triple for the triple product given by $\J xyz = (x\circ y^*) \circ z + (z\circ y^*)\circ x - (x\circ z)\circ y^*$ (see \cite[Theorem 4.1.45]{Cabrera}).\smallskip

Partial isometries in a C$^*$-algebra are precisely the fixed points of the triple product given above. An element $e$ in a JB$^*$-triple $E$ is said to be a \emph{tripotent} if $e= \J eee$. Any such element $e\in E$ induces a Peirce decomposition of $E$ in the form  $$E= E_{2} (e) \oplus E_{1} (e)\oplus E_0 (e),$$ where $E_i(e)$ corresponds to the eigenspace of $L(e, e)$ corresponding to the eigenvalue $\frac{i}{2}$ (see \cite[\S 4.2.2]{Cabrera} and \cite[\S 5.7]{Cabrera-Rodriguez-vol2}). The natural projection of $E$ onto $E_j (e)$, is known as the \emph{Peirce}-$j$ \emph{projection} and is denoted by $P_{i} (e)$. It is known that these projections are determined by the following expressions: $$P_2 (e) = L(e,e)(2 L(e,e) -Id_{E})=Q(e)^2,$$ $$ P_1 (e) = 4
L(e,e)(Id_{E}-L(e,e)) =2\left(L(e,e)-Q(e)^2\right),$$ $$ \ \hbox{ and } P_0 (e) =
(Id_{E}-L(e,e)) (Id_{E}-2 L(e,e)).$$ A tripotent $e\in E$ is called \emph{unitary} if $E= E_2(e)$.\smallskip

It is worthy remarking that unital JB$^*$-algebras are in one-to-one correspondence with JB$^*$-triples admitting a unitary element. More concretely, for each tripotent $e$ in a JB$^*$-triple, $E$, the Peirce-2 subspace $E_2 (e)$ is a unital JB$^*$-algebra with unit $e$, product $a\circ_{e} b := \{ a,e,b\}$ and involution $a^{*_e} := \J eae$. By a celebrated result by Kaup (see \cite[Proposition 5.5]{Kaup83}), the triple product on $E_2 (e)$ is uniquely determined by the expression
\begin{equation}\label{eq product Peirce2 as JB*-algebra} \{ a,b,c\} =(a \circ_{e} b^{*_e}) \circ_{e} c +(c \circ_{e} b^{*_e}) \circ_e a - (a \circ_e c) \circ b^{*_e},
\end{equation}
for every $a,b,c\in E_2 (e)$. Moreover, if $\mathfrak{A}$ is a unital JB$^*$-algebra, unitaries in $\mathfrak{A}$ for the structure of JB$^*$-algebra coincide with the unitary tripotents  for the structure of JB$^*$-triple (cf. \cite[Lemma 2.1$(d)$]{CuetoPeralta2023}). If $u$ is a unitary in $\mathfrak{A} = \mathfrak{A}_2 (u)$, we write $\mathfrak{A}(u) = (\mathfrak{A}, \circ_{u}, *_{u})$ for the unital JB$^*$-algebra defined above. Note that, as commented above, $\mathcal{U} (\mathfrak{A}) = \mathcal{U} ((\mathfrak{A}, \circ_{u}, *_{u}))$. We should note that there exists unital JB$^*$-algebras $\mathfrak{A}$ admitting a unitary element $u$ such that $T(\unit)\neq u$ for every surjective linear isometry $T$ on $\mathfrak{A}$ \cite[Example 5.7]{braun1978}.  \smallskip
 
\begin{lemma}\label{l piecewise +quadratic maps send unitaries to unitaries} Let $\Phi : \mathfrak{A}\to \mathfrak{B}$ be a quadratic mapping on operator commuting elements between two JBW$^*$-algebras. Then the following statements hold:\begin{enumerate}[$(a)$]
\item  $\Phi$ maps unitaries in $\mathfrak{A}$ to tripotents in $\mathfrak{B}$. In particular, $\Phi(\unit)$ is a tripotent in $\mathfrak{B},$ and $\Phi \left(\mathfrak{A}\right)\subseteq \mathfrak{B}_2 (\Phi(\unit))$;
\item $\Phi$ maps self-adjoint (resp., positive) elements in $\mathfrak{A}$ to self-adjoint (resp., positive) elements in the JBW$^*$-algebra $\mathfrak{B}_2 (\Phi(\unit))$, respectively. Moreover, $\Phi$ maps symmetries in $\mathfrak{A}$ to self-adjoint tripotents in $\mathfrak{B}_2 (\Phi(\unit))$;
\item If $a$ and $b$ are operator commuting positive elements in $\mathfrak{A}$, the elements $\Phi(a)$ and $\Phi(b)$ operator commute in the JBW$^*$-algebra $\mathfrak{B}_2 (\Phi(\unit))$ and $$ \Phi(a\circ b) = \Phi(a)\circ_{\Phi(\unit)} \Phi(b).$$ Consequently, $\Phi|_{\mathfrak{A}^{+}} : \mathfrak{A}^{+}\to (\mathfrak{B}_2 (\Phi(\unit)))^{+}$ is a piecewise Jordan homomorphism. 
\item If $\Phi|_{\mathfrak{A}_{sa}}$ is also additive on operator commuting elements, then $\Phi (0)=0$, $\Phi (-a) =- \Phi (a),$ for all $a\in \mathfrak{A}_{sa}$, the restriction of $\Phi$ to each associative subalgebra of $\mathfrak{A}_{sa}$ is Jordan-multiplicative with respect to the product of $\mathfrak{B}_2 (\Phi(\unit)),$ and all the elements in its image operator commute in the latter JBW$^*$-algebra.
\end{enumerate}
\end{lemma}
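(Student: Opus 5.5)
The plan rests on one observation: every element operator commutes with itself and with its own functional calculus. So the hypothesis $\Phi(U_a(b^*))=U_{\Phi(a)}(\Phi(b)^*)$ can be applied freely to pairs lying in an associative JBW$^*$-subalgebra of $\mathfrak{A}$. Such a subalgebra is a commutative von Neumann algebra, so there $U_a(b^*)=a^2\circ b^*$.

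For $(a)$, take a unitary $u$. Since $u$ operator commutes with itself, $\{u,u,u\}=U_u(u^*)=u$, and therefore
$$\Phi(u)=\Phi(\{u,u,u\})=\{\Phi(u),\Phi(u),\Phi(u)\},$$
so $\Phi(u)$ is a tripotent. In particular $e:=\Phi(\unit)$ is a tripotent. Now fix any $a\in\mathfrak{A}$. It operator commutes with $\unit$, and $U_a(\unit)=a^2$ while $U_{\unit}(a^*)=a^*$. I would then use $a=U_{\unit}(a)=\{\unit,a^*,\unit\}$, which gives
$$\Phi(a)=\{e,\Phi(a^*),e\}=Q(e)\Phi(a^*).$$
The range of $Q(e)$ is contained in $\mathfrak{B}_2(e)$, so $\Phi(\mathfrak{A})\subseteq\mathfrak{B}_2(e)$.

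For $(b)$, take $a=a^*$. The identity above reads $\Phi(a)=\{e,\Phi(a),e\}=\Phi(a)^{*_e}$, so $\Phi(a)$ is self-adjoint in $\mathfrak{B}_2(e)$. If $a\geq 0$, write $a=b^2$ with $b\geq0$ in the subalgebra generated by $a$. Then $a=U_b(\unit)$ with $b$ and $\unit$ operator commuting, and
$$\Phi(a)=U_{\Phi(b)}(e^*)=\{\Phi(b),e,\Phi(b)\}=\Phi(b)\circ_e\Phi(b).$$
This is the square of a self-adjoint element of $\mathfrak{B}_2(e)$, hence positive there. A symmetry $s$ is both unitary and self-adjoint, so by $(a)$ and the first part of $(b)$ its image is a self-adjoint tripotent in $\mathfrak{B}_2(e)$.

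For $(c)$, take operator commuting $a,b\geq0$ and put $c=a^{1/2}$. Then $a\circ b=U_c(b)$ with $c$ and $b$ operator commuting, so $\Phi(a\circ b)=U_{\Phi(c)}(\Phi(b)^*)$. Here $\Phi(b)^*$ must be rewritten inside $\mathfrak{B}_2(e)$; there $\Phi(b)$ is $*_e$-self-adjoint, and Kaup's formula \eqref{eq product Peirce2 as JB*-algebra} identifies $\{x,y,x\}$ with the $U$-operator of $(\mathfrak{B}_2(e),\circ_e)$ applied to $y^{*_e}$. This gives $\Phi(a\circ b)=U^{e}_{\Phi(c)}(\Phi(b))$. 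By $(b)$ we also have $\Phi(a)=\Phi(c)\circ_e\Phi(c)$. It then remains to show that $\Phi(b)$ operator commutes with $\Phi(c)$ in $\mathfrak{B}_2(e)$, since then $U^e_{\Phi(c)}(\Phi(b))=\Phi(c)^{2}\circ_e\Phi(b)=\Phi(a)\circ_e\Phi(b)$.

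For this I would adapt Proposition~\ref{prop:asocc_def}. Let $C$ be the associative JB-algebra generated by $a$, $b$ and $\unit$, and apply the argument to $C^+$. Showing that $\Phi$ is $U$-multiplicative on $C^+$ is routine. The step I expect to be the main obstacle is showing that the $U$-multiplicative identities force associativity of the JB-algebra generated by $\Phi(C^+)$. My approach: the relation $U^e_{\Phi(x)}(\Phi(y))=\Phi(x^2y)=U^e_{\Phi(y^{1/2})}(\Phi(x^2))$ holds for all $x,y\in C^+$. Fed into \cite[Proposition 1.1]{Hamhalter2025}, this yields operator commutativity of positive pairs.

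For $(d)$, additivity gives $\Phi(0)=\Phi(0)+\Phi(0)=0$, and $0=\Phi(a+(-a))=\Phi(a)+\Phi(-a)$, so $\Phi(-a)=-\Phi(a)$. On an associative subalgebra, every self-adjoint element splits as $a=a^+-a^-$ with all pieces operator commuting. So $\Phi(a)=\Phi(a^+)-\Phi(a^-)$. Bilinear expansion of the positive case $(c)$ then gives Jordan multiplicativity and mutual operator commutativity of the images.
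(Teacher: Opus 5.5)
Your proposal is correct and is essentially the paper's proof: parts $(a)$, $(b)$ and $(d)$ are argued the same way. In $(c)$ the decisive step is also the same, namely transporting the identity $U_{x}(y)=U_{y^{1/2}}(x^2)$ through $\Phi$ into $\mathfrak{B}_2(e)$ and invoking \cite[Proposition 1.1]{Hamhalter2025} there; just state explicitly that $(b)$ gives $\Phi(x)=\Phi(x^2)^{1/2}$ in $\mathfrak{B}_2(e)$, which puts the identity in the form that proposition requires, and note that the detour through associativity of the algebra generated by $\Phi(C^+)$ is unnecessary, since operator commutativity of the pairs $\Phi(x^2)$, $\Phi(y)$ is all you use.
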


\begin{proof} $(a)$ Let us take $u\in \mathcal{U}(\mathfrak{A})$. Since $u$ and $u^*$ operator commute (see Lemma~\ref{lemma_op_comm_unit}), it follows from the hypotheses that $$\Phi (u) = \Phi \left( U_u (u^*) \right) = U_{\Phi(u)} \left(\Phi(u)^*\right) =\{\Phi(u), \Phi(u), \Phi(u)\},$$ that is, $\Phi (u)$ is a tripotent in $\mathfrak{B}$. In particular $\Phi (\unit)$ is a tripotent. To simplify the notation we shall write $w= \Phi(\unit)$.\smallskip 
	
Every element $a\in \mathfrak{A}$ operator commute with $\unit$, and hence $$\Phi (a) = \Phi \left( U_{\unit} (a) \right) = U_{\Phi(\unit)} \left(\Phi(a^*)^*\right) =\{w, \Phi(a^*), w\}\in  \mathfrak{B}_2 (w),$$ for all $a\in \mathfrak{A}$.\smallskip

$(b)$ If we take $a=a^*\in \mathfrak{A}$, we deduce, as above, that $$\Phi (a) = \Phi (a^*) = \Phi \left( U_{\unit} (a^*) \right) = \{w, \Phi(a), w\} = \Phi(a)^{*_w},$$ which shows that $\Phi (a)$ is self-adjoint in  $\mathfrak{B}_2 (w)$.  If $a$ is a positive element in $\mathfrak{A}$, we can write $a= b^2$ for some $b\in  \mathfrak{A}_{sa}$ such that $a,$ $b,$ and $\unit$ operator commute, and thus $$\Phi (a) = \Phi (b^2) = \Phi \left( U_{b} (\unit) \right) = \{\Phi(b), w, \Phi(b)\} = \Phi(b) \circ_{w} \Phi(b).$$ Observe that symmetries in $\mathfrak{A}$ are precisely the self-adjoint unitaries in $\mathfrak{A}$. So, the final conclusion in $(b)$ is a consequence of $(a)$ what have been already proved.\smallskip

$(c)$ Observe that for each $a\in \mathfrak{A}^{+}$ we can define, via functional calculus, $a^{\frac12}\in \mathfrak{A}^{+}$ belonging to the JB$^*$-subalgebra of $\mathfrak{A}$ generated by $a$ and satisfying $(a^{\frac12})^2 = a$. By $(b)$, $\Phi (a) =  \Phi(a^{\frac12}) \circ_{w} \Phi(a^{\frac12}),$ and hence $\Phi (a)$ belongs to the JB$^*$-subalgebra of $\mathfrak{B}_2 (w)$ generated by the positive element $\Phi(a^{\frac12})$ and $\Phi(a^{\frac12}) = \Phi(a)^{\frac12}$, where the latter is computed in  $\mathfrak{B}_2 (w)$. To avoid confusion, we shall write $U^{w}$ for the $U$ operator in the JBW$^*$-algebra $\mathfrak{B}_2 (w)$.\smallskip

Take now two operator commuting elements $a$ and $b$ in $\mathfrak{A}^+$. Observe that $a$ and $b^{\frac12}$ (respectively, $b$ and $a^{\frac12}$) operator commute in $\mathfrak{A}$ (cf. \cite[Theorem 3.12]{vandeWet2020}). By \cite[Proposition 1.1]{Hamhalter2025} the identity $ U_{a^{1/2}}(b) = U_{b^{1/2}}(a)$ holds. Therefore, by the previous comments and the hypotheses, we get: 
$$ \begin{aligned} U^{w}_{\Phi (a)^{1/2}}(\Phi(b)) &= U^{w}_{\Phi (a^{1/2})}(\Phi(b)) =\{\Phi (a^{1/2}),\Phi(b) ,\Phi (a^{1/2})\} = \Phi(\{a^{\frac12},b,a^{\frac12}\})\\
	& =\Phi (U_{a^{1/2}}(b))  = \Phi (U_{b^{1/2}}(a)) =\Phi (\{b^{\frac12}, a, b^{\frac12}\}) \\
	&= \{\Phi (b^{\frac12}), \Phi (a), \Phi (b^{\frac12})\}  =  U^{w}_{\Phi (b^{1/2})}(\Phi(a)) = U^{w}_{\Phi (b)^{1/2}}(\Phi(a)),
\end{aligned}
$$ where at the second and at penultimate equalities we applied \eqref{eq product Peirce2 as JB*-algebra}. A new application of \cite[Proposition 1.1]{Hamhalter2025} in the JBW$^*$-algebra $\mathfrak{B}_2 (w)$ proves that $\Phi(a)$ and $\Phi(b)$ operator commute in  $\mathfrak{B}_2 (w)$. Consequently,
$$\begin{aligned}\begin{aligned}
\Phi(a^2\circ b) &= \Phi(U_a(b)) = \Phi \{a,b,a\}= \{\Phi(a),\Phi(b), \Phi(a)\} = U^{w}_{\Phi(a)}(\Phi(b)) \\ &= (\Phi(a)\circ_{w} \Phi(a))\circ_w \Phi(b).		
	\end{aligned}
\end{aligned}$$ Having in mind that every positive element in $\mathfrak{A}$ admits a square root, together with the conclusion in the first paragraph of the proof of this statement, we finally get that 
$$ \Phi(a\circ b) = \Phi(a)\circ_{w} \Phi(b),$$ for every $a,b \in \mathfrak{A}_+$ such that $a,b$ operator commute, i.e. $\Phi|_{\mathfrak{A}^{+}} : \mathfrak{A}^{+}\to (\mathfrak{B}_2 (w))^{+}$ is a piecewise Jordan homomorphism.\smallskip

$(d)$ Since $0$ operator commute with itself we have $\Phi (0) = \Phi (0+0) = \Phi(0)+ \Phi(0)$, which proves that $\Phi(0)=0$. Every $a\in \mathfrak{A}_{sa}$ operator commute with its opposite, so $0=\Phi (0) = \Phi (a + (-a)) = \Phi (a) + \Phi (-a)$. \smallskip

Take two operator commuting elements $a,b\in \mathfrak{A}_{sa}$. The JB$^*$-subalgebra $\mathfrak{J}$ generated by $a, b$ and $\unit$ is a commutative unital C$^*$-subalgebra, and hence any two elements in $\mathfrak{J}$ operator commute in $\mathfrak{A}$ (cf. \cite[Theorem 3.12]{vandeWet2020}). Working in $\mathfrak{J}$ we can find pairwise operator commuting positive elements $a^+,a^-, b^+,b^-$ such that $a = a^+- a^-$ and $b=b^+ - b^-$. By applying $(c)$ we deduce that $\Phi(a^+),$ $\Phi(a^-),$ $\Phi(b^+),$ and $\Phi(b^-)$ pairwise operator commute in $\mathfrak{B}_2 (w)$. Therefore $\Phi (a) = \Phi(a^+)- \Phi(a^-)$ and $\Phi (b) = \Phi(b^+)- \Phi(b^-)$ operator commute. Finally,  
$$\begin{aligned}
\Phi (a) \circ_{w} \Phi (b) &=\Phi (a^+- a^-)\circ_w \Phi (b^+- b^-) \\ &= (\Phi (a^+) - \Phi (a^-))\circ_w (\Phi (b^+) - \Phi (b^-)) \\
&=\sum_{\sigma, \tau\in \{\pm\}} \sigma \ \tau\ \Phi (a^{\sigma}) \circ_{w} \Phi (b^{\tau}) = \Phi \left( \sum_{\sigma, \tau\in \{\pm\}} \sigma \ \tau\ a^{\sigma} \circ b^{\tau} \right)\\
&= \Phi \left(a \circ b\right).
\end{aligned}$$
\end{proof}

The arguments in the proof of Lemma~\ref{l piecewise +quadratic maps send unitaries to unitaries} are valid to deduce the following.

\begin{lemma}\label{r *-quadratic maps on self-adjoint elements} 
Let $\Phi : \mathfrak{A}_{sa}\to \mathfrak{B}_{sa}$ be a quadratic mapping on operator commuting elements, where $\mathfrak{A}$ and $\mathfrak{B}$ are  JBW$^*$-algebras. Then the following statements hold:\begin{enumerate}[$(a)$]
	\item  $\Phi$ maps symmetries in $\mathfrak{A}_{sa}$ to tripotents in $\mathfrak{B}_{sa}$ (i.e., elements which are written as the difference of two orthogonal projections). In particular, $\Phi(\unit)$ is a tripotent in $\mathfrak{B}_{sa},$ and $\Phi \left(\mathfrak{A}_{sa}\right)\subseteq \mathfrak{B}_2 (\Phi(\unit))_{sa} \cap \mathfrak{B}_{sa}$;
	\item $\Phi$ maps positive elements in $\mathfrak{A}_{sa}$ to positive elements in the JBW$^*$-algebra $\mathfrak{B}_2 (\Phi(\unit))$;
	\item If $a$ and $b$ are operator commuting positive elements in $\mathfrak{A}$, the elements $\Phi(a)$ and $\Phi(b)$ operator commute in the JBW$^*$-algebra $\mathfrak{B}_2 (\Phi(\unit))$ and $$ \Phi(a\circ b) = \Phi(a)\circ_{\Phi(\unit)} \Phi(b).$$ Consequently, $\Phi|_{\mathfrak{A}^{+}} : \mathfrak{A}^{+}\to (\mathfrak{B}_2 (\Phi(\unit)))^{+}$ is a piecewise Jordan homomorphism. 
	\item If $\Phi$ is also additive on operator commuting elements, then $\Phi (0)=0$, $\Phi (-a) =- \Phi (a),$ for all $a\in \mathfrak{A}_{sa}$,  the restriction of $\Phi$ to each associative subalgebra of $\mathfrak{A}_{sa}$ is Jordan-multiplicative with respect to the product of $(\mathfrak{B}_2 (\Phi(\unit)))_{sa},$ and all the elements in its image operator commute in the latter JBW-algebra.
\end{enumerate}	 
\end{lemma}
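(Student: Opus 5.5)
The plan is to repeat the proof of Lemma~\ref{l piecewise +quadratic maps send unitaries to unitaries}, now working only with self-adjoint elements. Here the quadratic identity reads $\Phi(U_a(b)) = U_{\Phi(a)}(\Phi(b))$ for operator commuting $a,b\in\mathfrak{A}_{sa}$. Since $\Phi(b)\in \mathfrak{B}_{sa}$, we have $U_{\Phi(a)}(\Phi(b)) = \{\Phi(a),\Phi(b),\Phi(a)\}$, so every triple-product manipulation used there remains available. Throughout, set $w=\Phi(\unit)$.

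For $(a)$, take a symmetry $s\in\mathfrak{A}_{sa}$. It operator commutes with itself, so $\Phi(s)=\Phi(U_s(s)) = U_{\Phi(s)}(\Phi(s))=\{\Phi(s),\Phi(s),\Phi(s)\}$, and $\Phi(s)$ is a self-adjoint tripotent in $\mathfrak{B}$. A self-adjoint tripotent $e$ satisfies $e^3=e$ in the associative algebra generated by $e$, so spectral calculus writes it as the difference of two orthogonal projections. Applying this to $s=\unit$ shows $w$ is such a tripotent. Since every $a$ operator commutes with $\unit$, we get $\Phi(a)=\Phi(U_\unit(a))=U_w(\Phi(a))=\{w,\Phi(a),w\}\in\mathfrak{B}_2(w)$. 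This identity also says $\Phi(a)^{*_w}=\Phi(a)$, so $\Phi(a)\in\mathfrak{B}_2(w)_{sa}\cap\mathfrak{B}_{sa}$.

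For $(b)$, write $a=b^2$ with $b=a^{1/2}$. Then $\Phi(a)=\Phi(U_b(\unit))=\{\Phi(b),w,\Phi(b)\}=\Phi(b)\circ_w\Phi(b)$, which is the square of a $*_w$-self-adjoint element of $\mathfrak{B}_2(w)$ and hence positive there.

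For $(c)$, I would copy the argument in Lemma~\ref{l piecewise +quadratic maps send unitaries to unitaries}$(c)$ verbatim. First, $\Phi(a^{1/2})=\Phi(a)^{1/2}$ in $\mathfrak{B}_2(w)$, because $\Phi(a^{1/2})$ is a positive square root of $\Phi(a)$ there. Next, for operator commuting positive $a,b$, the element $a^{1/2}$ commutes with $b$ and $b^{1/2}$ commutes with $a$ \cite[Theorem 3.12]{vandeWet2020}. Proposition 1.1 of \cite{Hamhalter2025} then gives $U_{a^{1/2}}(b)=U_{b^{1/2}}(a)$. Applying $\Phi$ and using \eqref{eq product Peirce2 as JB*-algebra} to rewrite triple products as $U^w$-operators yields $U^w_{\Phi(a)^{1/2}}(\Phi(b))=U^w_{\Phi(b)^{1/2}}(\Phi(a))$. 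The converse direction of \cite[Proposition 1.1]{Hamhalter2025}, applied in $\mathfrak{B}_2(w)$, gives operator commutativity of $\Phi(a)$ and $\Phi(b)$. Then $\Phi(a^2\circ b)=\Phi(U_a b)=U^w_{\Phi(a)}\Phi(b)=(\Phi(a)\circ_w\Phi(a))\circ_w\Phi(b)$, and replacing $a$ by $a^{1/2}$ gives multiplicativity on such pairs.

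For $(d)$, additivity on operator commuting elements gives $\Phi(0)=0$ and $\Phi(-a)=-\Phi(a)$ at once. For an associative subalgebra, decompose its elements as $a=a^+-a^-$ and $b=b^+-b^-$ with all four parts pairwise operator commuting. Then $(c)$ applies to each pair of parts, additivity reassembles $\Phi(a)=\Phi(a^+)-\Phi(a^-)$, and bilinearity of $\circ_w$ gives $\Phi(a\circ b)=\Phi(a)\circ_w\Phi(b)$ together with operator commutativity of all images.

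The main obstacle is the passage through \cite[Proposition 1.1]{Hamhalter2025} in the Peirce-2 algebra $\mathfrak{B}_2(w)$. There I must check that the triple product of $\mathfrak{B}$, restricted to $\mathfrak{B}_2(w)$, coincides with the $U^w$-operator of the JBW$^*$-algebra $(\mathfrak{B}_2(w),\circ_w,*_w)$. This is exactly what \eqref{eq product Peirce2 as JB*-algebra} provides, and the $*_w$-involutions cause no trouble because all the elements involved are $*_w$-self-adjoint.
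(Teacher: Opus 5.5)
Your proposal is correct and follows the same route as the paper. The paper proves this lemma simply by noting that the arguments of Lemma~\ref{l piecewise +quadratic maps send unitaries to unitaries} remain valid, and you carry out exactly that adaptation: $U_s(s)=s$ for symmetries, $U_{\unit}$ for Peirce-$2$ membership, $U_{a^{1/2}}(\unit)$ for positivity, and the Hamhalter--Kaup argument for (c)--(d). The only new input needed is that $\Phi(b)$ is self-adjoint, so $U_{\Phi(a)}(\Phi(b))=\{\Phi(a),\Phi(b),\Phi(a)\}$, and you note this.
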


We can now state a generalization of the result by Friedman and Hakeda in \cite{FriHak88} for JBW-algebras. The result is new even in the case of von Neumann algebras.

\begin{theorem}\label{t FriedmanHakeda quadratic and additive on oc elements} Let $\Phi : \mathfrak{A}_{sa}\to \mathfrak{B}_{sa}$ be a quadratic mapping on operator commuting elements, where $\mathfrak{A}$ and $\mathfrak{B}$ are  JBW$^*$-algebras and $\mathfrak{A}$ admits no direct summands of type $I_2$. Suppose, additionally, that $\Phi $ is additive on operator commuting elements. Then $\Phi: \mathfrak{A}_{sa} \to \mathfrak{B}_2 (\Phi(\unit))_{sa}$ is a unital Jordan homomorphism. Furthermore, if $\Phi$ is bijective, we can conclude that $\Phi (\unit)$ is a central symmetry in $\mathfrak{B}$ and $\Phi$ is a Jordan isomorphism from $\mathfrak{A}_{sa}$ to $( \mathfrak{B}, \circ_{_{\Phi(\unit)}}, *_{_{\Phi(\unit)}})$. 
\end{theorem}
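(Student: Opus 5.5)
Set $w=\Phi(\unit)$ and let $\mathfrak{C}=\mathfrak{B}_2(w)$, equipped with $\circ_w$ and $*_w$; this is a JBW$^*$-algebra with unit $w$. By Lemma~\ref{r *-quadratic maps on self-adjoint elements}$(a)$, $\Phi(\mathfrak{A}_{sa})\subseteq\mathfrak{C}_{sa}$. The core of the argument is to prove that $\Phi:\mathfrak{A}_{sa}\to\mathfrak{C}_{sa}$ is real linear, using Theorem~\ref{t corollary of the Mackey-Gleason} with $X=\mathfrak{C}_{sa}$.

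We start with homogeneity. Lemma~\ref{r *-quadratic maps on self-adjoint elements}$(d)$ says that $\Phi$ is additive and Jordan multiplicative on every associative subalgebra of $\mathfrak{A}_{sa}$. Fix $a$ and consider the associative JB-subalgebra generated by $a$ and $\unit$. On it, $t\mapsto\Phi(t\unit)$ is additive, and it is odd by $(d)$. On positive scalars, the map $\lambda\mapsto \Phi(\lambda\unit)$ is multiplicative and takes positive values by $(b)$. An additive map that is bounded below on positives is $\mathbb{Q}$-linear and monotone, hence linear. Since $\Phi(\unit)=w$, this gives $\Phi(t\unit)=tw$.

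Multiplicativity gives $\Phi(ta)=\Phi(t\unit\circ a)=tw\circ_w\Phi(a)=t\Phi(a)$. So $\Phi$ is homogeneous and additive on operator commuting pairs.

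Next comes boundedness on the unit ball. For a projection $p$, $\Phi(p)=\Phi(p)\circ_w\Phi(p)$ is a projection in $\mathfrak{C}$, so $\|\Phi(p)\|\le1$. Moreover $\Phi(\unit-p)=w-\Phi(p)$. For $a\in\mathfrak{A}_{sa}$ with $-\unit\le a\le\unit$, the elements $\unit\pm a$ are positive and commute with $a$. By $(b)$, $\Phi(\unit\pm a)=w\pm\Phi(a)\ge0$ in $\mathfrak{C}$, so $-w\le\Phi(a)\le w$ and hence $\|\Phi(a)\|\le1$. The norm of $\mathfrak{C}$ is the restriction of that of $\mathfrak{B}$, because Peirce-$2$ subspaces are JB$^*$-subtriples whose JB$^*$-algebra norm is the triple norm.

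Theorem~\ref{t corollary of the Mackey-Gleason} therefore applies, since $\mathfrak{A}$ has no type $I_2$ summand, and shows that $\Phi$ is linear and continuous. Multiplicativity on squares then gives $\Phi(a^2)=\Phi(a)\circ_w\Phi(a)$ for every $a$, because $a$ commutes with itself. Polarizing with linearity yields $\Phi(a\circ b)=\Phi(a)\circ_w\Phi(b)$, so $\Phi$ is a unital Jordan homomorphism into $\mathfrak{C}_{sa}$.

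For the bijective case, $\mathfrak{B}_{sa}=\Phi(\mathfrak{A}_{sa})\subseteq\mathfrak{C}$, so $\mathfrak{B}=\mathfrak{B}_{sa}+i\mathfrak{B}_{sa}\subseteq\mathfrak{B}_2(w)$. Hence $w$ is a unitary tripotent of $\mathfrak{B}$. Being self-adjoint, it is a symmetry: for self-adjoint $w$, $w^3=w$ together with invertibility gives $w^2=\unit$.

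To see that $w$ is central, compare the two identities. On $\mathfrak{B}_{sa}$, the involution $*_w$ is $x\mapsto U_w(x)$. Since $\mathfrak{B}_{sa}\subseteq\mathfrak{C}_{sa}$, we get $U_w(x)=x$ for all $x\in\mathfrak{B}_{sa}$. Write $w=p-q$ with $p+q=\unit$. Then $U_w=U_p+U_q-2U_{p,q}$, and its restriction to the Peirce-$1$ space of $p$ is $-\mathrm{Id}$. That space is $*$-invariant, so it must vanish. Hence $p$, and therefore $w$, is central.

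The Jordan product $\circ_w$ is then $x\circ y\circ w$ computed in the associative picture, and $\Phi$ becomes a Jordan isomorphism onto $(\mathfrak{B},\circ_w,*_w)_{sa}$. Isometry follows because Jordan isomorphisms of JB-algebras are isometric.

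I expect the main obstacle to be the homogeneity step, namely ruling out pathological additive scalar maps. The positivity argument is the tool I would try first.
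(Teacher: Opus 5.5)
Your proof is correct and follows essentially the same route as the paper: $\Phi(t\unit)=tw$ from $\mathbb{Q}$-linearity plus monotonicity, then homogeneity, a bound on the unit ball via the order of $\mathfrak{B}_2(w)$, Theorem~\ref{t corollary of the Mackey-Gleason}, polarization of $\Phi(a^2)=\Phi(a)\circ_w\Phi(a)$, and, in the bijective case, killing the Peirce-$1$ space of $p=\frac12(\unit+w)$, exactly as the paper kills $\{p,\mathfrak{B}_{sa},q\}$. The differences are cosmetic: you get homogeneity from Jordan multiplicativity with $t\unit$ rather than from the quadratic identity $\Phi(\{t\unit,a,t\unit\})=t^2\Phi(a)$, and you use the sharper bound $-w\le\Phi(a)\le w$ rather than splitting $a=a^+-a^-$.
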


\begin{proof} As in the proof of the previous Lemma~\ref{l piecewise +quadratic maps send unitaries to unitaries}, we write $w = \Phi (\unit)$ which is a self-adjoint tripotent in $\mathfrak{B}$ (cf. Lemma~\ref{r *-quadratic maps on self-adjoint elements}). Since $\unit$ operator commute with itself, it follows from the hypotheses, and a simple induction argument, that $\Phi ( n \unit) = n \Phi (\unit) = n w$ for all natural $n$, and by Lemma~\ref{r *-quadratic maps on self-adjoint elements}$(d)$, $\Phi (q \unit ) = q \Phi (\unit) = q w,$ for all $q\in \mathbb{Q}$. Having in mind that $\Phi$ is Jordan multiplicative on operator commuting elements in $\mathfrak{A}_{sa}$ (it suffices to note that $\Phi (t \unit) \leq \Phi (s\unit)$ for all $t\leq s$ in $\mathbb{R}$), it is easy to check that $\Phi (t \unit) = t\Phi (\unit) = t w,$ for all $t\in \mathbb{R}$. \smallskip  
	
Let us take $a= a^*\in \mathfrak{A}$. Since $a$ operator commutes with $\unit$ and $\Phi$ is additive on operator commuting elements, we deduce that $$\Phi ( t^2 a) = \Phi \left( \{t \unit , a, t \unit\}  \right)= \{ \Phi \left( t \unit\right) ,  \Phi \left(a\right),  \Phi \left( t \unit\right)\} = \{ t w ,  \Phi \left(a\right),  t w\} = t^2 \Phi (a),$$ for all real $t$, and by Lemma~\ref{r *-quadratic maps on self-adjoint elements}$(d)$, $\Phi (t a ) = t \Phi (a) $ for all $t\in \mathbb{R}$, that is,  $\Phi$ is $1$-homogeneous and clearly additive on operator commuting elements by hypothesis. \smallskip
	
	Given $a=a^*$ with $\|a\|\leq 1$. We can write $a = a^+ - a^-$, where $0\leq a^+, a^-$ and $a^+ \circ a^- =0$. We also know that in this case $0\leq a^+ a^- \leq \unit$. Since $\unit$, $a^+$ and $a^-$ operator commute, a new application of Lemma~\ref{r *-quadratic maps on self-adjoint elements}$(d)$ and $(b)$ assures that $$0\leq \Phi (a^+)  = \Phi(\unit-(\unit-a^+)) = \Phi(\unit) - \Phi(\unit-a^+) \leq \Phi(\unit) =w.$$	 We can similarly prove that $0\leq \Phi (a^-) \leq w$, and thus $$\|\Phi(a^+)\|, \|\Phi (a^-)\|\leq \|w\| = 1.$$  Since $a^+$ and $a^-$ operator commute, the first hypothesis on $\Phi$ and the previous conclusion lead to 
	$\| \Phi (a) \| = \| \Phi (a^+) - \Phi (a^-)\| \leq \|\Phi (a^+)\| + \|\Phi (a^-)\| \leq 2$. We have thus proved that $\Phi|_{\mathfrak{A}_{sa}} : {\mathfrak{A}_{sa}}\to (\mathfrak{B}_2 (\Phi(\unit)))_{sa}$ is bounded on the closed unit ball of $\mathfrak{A}_{sa}$. We are in a position to apply Theorem~\ref{t corollary of the Mackey-Gleason} to the mapping $\Phi|_{\mathfrak{A}_{sa}} : {\mathfrak{A}_{sa}}\to (\mathfrak{B}_2 (\Phi(\unit)))_{sa}$ to deduce that it is linear and continuous.\smallskip
	
Since every $a\in \mathfrak{A}_{sa}$ operator commutes with itself,  Lemma~\ref{r *-quadratic maps on self-adjoint elements}$(d)$ also assures that $\Phi (a^2) = \Phi(a)\circ_{w}\Phi(a)$, and thus $\Phi$ is a unital Jordan homomorphism from $\mathfrak{A}_{sa}$ to $\mathfrak{B}_2 (\Phi(\unit))$.\smallskip

For the final claim, observe that if $\Phi$ is surjective we have $\mathfrak{B}_{sa} = \Phi\left( \mathfrak{A}_{sa} \right)\subseteq \mathfrak{B}_2 (\Phi(\unit))_{sa} \cap \mathfrak{B}_{sa}\subseteq \mathfrak{B}_{sa}.$  It follows that $\Phi (\unit)$ is a symmetry in $\mathfrak{B}$. It only remains to prove that $\Phi(\unit)$ is central. To see this, we write $\Phi(\unit) = p-q$, where $p$ and $q$ are two orthogonal projections in $\mathfrak{B}$ with $p+q = \unit$, and observe that, by the hypotheses on $\Phi$, for each $a\in \mathfrak{A}_{sa}$ we have $$\begin{aligned}
U_{p}\left(\Phi(a)\right) &+ U_{q}\left(\Phi(a)\right) + 2 \{p,\Phi(a), q\} = \Phi (a) = \Phi (\{\unit, a,\unit\}) \\ 
&= \{\Phi (\unit), \Phi (a), \Phi (\unit)\} = U_{p}\left(\Phi(a)\right) + U_{q}\left(\Phi(a)\right) - 2 \{p,\Phi(a), q\}.
\end{aligned} $$ The surjectivity of $\Phi$ implies that $\{p,\mathfrak{B}_{sa}, q\}=\{0\},$ and hence $ \mathfrak{B}_{sa}=U_p(\mathfrak{B})\oplus
U_q(\mathfrak{B}),$ which assures that $p$, $q,$ and $\Phi(\unit)$ are central elements.
\end{proof}

\begin{example}\label{couterexample spin} We shall give an example showing that the previous Theorems~\ref{t corollary of the Mackey-Gleason} and \ref{t FriedmanHakeda quadratic and additive on oc elements} fail for JBW$^*$-algebras of type $I_2$. The simplest case is a JBW$^*$-algebra factor of type $I_2,$ also known as a spin factor. Spin factors are very well studied and described (see for example, \cite{Harris1974,Harris1981,HervIsi1992,FriedmanBook,HOS}, \cite[\S 2.5.9]{Cabrera} and the discussion in \cite[\S 1.4]{EscolanoPeraltaVillena2025_OpCommut}). One of the representations reads as follows: a spin factor $V$ is obtained from a complex Hilbert space $(H,\langle \cdot | \cdot \rangle),$  with dim$(H)\geq 3$, together with a conjugation (i.e., a conjugate-linear isometry of period-$2$), $x\mapsto \overline{x}$, on $H$, a norm-one element $\unit = \overline{\unit}$, which is the unit for the Jordan product $$x \circ y = \langle x| \unit  \rangle y + \langle y | \unit  \rangle x -\langle x| \overline{y}  \rangle \unit \ \ (x,y\in V),$$ where the involution and norm on $V$ are given by $$x^* = 2 \langle  \unit | x \rangle \unit -  \overline{x}, \hbox{ and } \| x \|^2 =  \|x\|_2^2 + \left( \|x\|_2^4
	- | \langle x |\overline{x} \rangle |^2 \right)^{\frac12} \  (x\in V),$$ respectively.\smallskip
	
	It is known that $V_{sa} = \mathbb{R}\unit \oplus H^{-}$, where $H^{-}=\{h\in H : \langle \unit, h\rangle =0, \ \overline{h} = -h \},$ which is a real subspace of $H$ with dimension $\geq 2$. We can always find a bijective mapping $F: H^{-}\to H^{-},$ which is $1$-homogeneous but not additive. The mapping $\Phi: V_{sa} \to V_{sa}$ defined by $\Phi (\lambda \unit + h ) = \lambda \unit + F(h)$ ($\lambda \in \mathbb{R}$, $h\in H^{-}$), is a well-defined bijection, which is clearly $1$-homogeneous but not additive.\smallskip  	
	
	We shall next show that $\Phi$ is additive on operator commuting elements in $V_{sa}$. To see this, recall that elements $a,b\in V$ operator commute if, and only if, $b$ is a linear combination of $a$ and $\unit$ or $a$ is a linear combination of $b$ and $\unit$ (cf. \cite[Remark 1.5]{EscolanoPeraltaVillena2025_OpCommut}). So, if $a$ and $b$ operator commute in $V_{sa}$, we can reduce to the case in which $a= \alpha \unit + h$ and $b = t \unit + s a = (t+ s\alpha)\unit + s h$, where $s,t,\alpha\in \mathbb{R}$, $h\in H^{-}$. It follows from the definition of $\Phi$ that $$\Phi ( a + b) =  (t+ s\alpha +\alpha )\unit + F((s+1) h) = (t+ s\alpha)\unit +(s+1)  F( h) = \Phi (a) + \Phi (b).$$ In fact, $\Phi$ is linear on the linear span of $\unit$ and $h$. \smallskip 
	
	We can further assume that the mapping $F: H^{-}\to H^{-}$ satisfies $\|F(k) \| = \|k\|$ for all $k\in H^{-}$. In such a case, for $a$ and $b$ as above we have $$\begin{aligned}
		U_{a} (b) &= \{a,b,a\} = \{\alpha \unit + h, (t+ s\alpha)\unit + s h ,\alpha \unit + h\} \\
		&= \left( \alpha^2 t + s \alpha^3 + (2\alpha s + t) \|h\|^2 \right)\unit + \left( 2 \alpha  t+ 3 s \alpha^2  \right) h.
	\end{aligned}$$ Having in mind that $\Phi$ is linear on the linear span of $\unit$ and $h$, with $\|\Phi(h)\| =\|h\|$, we deduce that $$  \begin{aligned} \Phi\left(
		U_{a} (b)\right) &= \left( \alpha^2 t + s \alpha^3 + (2\alpha s + t) \|h\|^2 \right)\unit + \left( 2 \alpha  t+ 3 s \alpha^2  \right) F(h)\\
		&= \left( \alpha^2 t + s \alpha^3 + (2\alpha s + t) \|F(h)\|^2 \right)\unit + \left( 2 \alpha  t+ 3 s \alpha^2  \right) F(h) \\
		&= U_{\alpha \unit + F(h)} \left((t+s \alpha) \unit + s F(h)\right) = U_{\Phi(a)} (\Phi(b)).
	\end{aligned}$$ Therefore, $\Phi$ is additive and a quadratic mapping on operator commuting elements in $V_{sa}$. 
\end{example}

It is worth to conclude this section by listing some additional properties of quadratic maps on operator commuting elements  between JBW$^*$-algebras (compare Lemma~\ref{l piecewise +quadratic maps send unitaries to unitaries}). If $w$ is a tripotent in a JB$^*$-algebra $\mathfrak{A},$ for each $a\in \mathfrak{A}_2 (w)$ we shall write $U^{w}_a$ for the $U$-operator on $\mathfrak{A}_2 (w)$ associated with an element $a\in \mathfrak{A}_2 (w)$. 

\begin{lemma}\label{l piecewise +quadratic maps send unitaries to unitaries surjective and bijective} Let $\Phi : \mathfrak{A}\to \mathfrak{B}$ be a quadratic mapping on operator commuting elements between two JBW$^*$-algebras such that $\mathfrak{A}$ admits no direct summands of type $I_2$. Then the following statements hold:
\begin{enumerate}[$(a)$]
\item If $\Phi({\mathfrak{A}_{sa}}) = (\mathfrak{B}_2 (\Phi(\unit)))_{sa}$ and $\Phi|_{\mathfrak{A}_{sa}}$ is additive on operator commuting elements, then there exists a central projection $z\in  \mathfrak{B}_2 (\Phi(\unit))$ satisfying $$\Phi (i \unit) = i (z- (\Phi(\unit)-z)).$$
\item If $\Phi$ satisfies the hypotheses in $(a)$ and $\Phi|_{\mathfrak{A}_{sa}} : {\mathfrak{A}_{sa}}\to (\mathfrak{B}_2 (\Phi(\unit)))_{sa}$ is bijective, then there exists a central projection $r\in \mathfrak{A}$ such that $\Phi (r) = z$, $\Phi (\unit-r) = \Phi (\unit)-z$, $$\Phi (i a) = i \Phi (a)\ (\forall a\in U_{r} (\mathfrak{A}_{sa})), \ \Phi (i b) = -i \Phi (b) \ (\forall b\in U_{\unit-r} (\mathfrak{A}_{sa})),$$ $\Phi \left( r\circ \mathfrak{A}_{sa} \right)\subseteq z\circ_w  \mathfrak{B}_2 (\Phi(\unit))_{sa},$ and $\Phi \left( (\unit-r)\circ \mathfrak{A}_{sa} \right)\subseteq ({w-z}) \circ_{w} \mathfrak{B}_2 (\Phi(\unit))_{sa}.$
\item If $\Phi$ satisfies the hypotheses in $(a)$ and $(b)$, and $\Phi (a+i b) = \Phi (a) + \Phi (ib)$ for all $a,b\in \mathfrak{A}_{sa}$, then $\Phi: \mathfrak{A}\to \mathfrak{B}_2 (\Phi(\unit))$ is a real linear Jordan $^*$-isomorphism, actually $\Phi$ is the direct sum of linear Jordan $^*$-isomorphism and a conjugate-linear Jordan $^*$-isomorphism between appropriate weak$^*$-closed ideals of $\mathfrak{A}$ and $\mathfrak{B}_2 (\Phi(\unit))$. 
\end{enumerate}
\end{lemma}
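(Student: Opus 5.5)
Throughout, write $w=\Phi(\unit)$ and work inside the JBW$^*$-algebra $\mathfrak{B}_2(w)$ with product $\circ_w$ and involution $*_w$. By Lemma~\ref{l piecewise +quadratic maps send unitaries to unitaries}, $\Phi(\mathfrak{A})\subseteq\mathfrak{B}_2(w)$, and $\Phi$ sends self-adjoint elements to $*_w$-self-adjoint elements. Under the additivity hypothesis of $(a)$, the argument of Theorem~\ref{t FriedmanHakeda quadratic and additive on oc elements} applies verbatim to $\Phi|_{\mathfrak{A}_{sa}}$, since $\mathfrak{A}$ has no type $I_2$ summand. Hence $\Phi|_{\mathfrak{A}_{sa}}$ is a real linear unital Jordan homomorphism into $\mathfrak{B}_2(w)_{sa}$, and by the surjectivity in $(a)$ it is onto.

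For $(a)$, put $v=\Phi(i\unit)$. Since $i\unit$ is a unitary, part $(a)$ of Lemma~\ref{l piecewise +quadratic maps send unitaries to unitaries} says $v$ is a tripotent. Moreover $i\unit$ is central, so it operator commutes with every $a\in\mathfrak{A}$. The quadratic identity then gives
\[
\Phi(U_{i\unit}(a^*))=\Phi(-a^*)=\{v,\Phi(a),v\} .
\]
For $a\in\mathfrak{A}_{sa}$ this yields $-\Phi(a)=\{v,\Phi(a),v\}$, using $\Phi(-a)=-\Phi(a)$. Taking $a=\unit$ gives $\{v,w,v\}=-w$, i.e. $U^w_v(\unit_w)=v\circ_w v=-w$ after writing the triple product through \eqref{eq product Peirce2 as JB*-algebra}. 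Also $v\in\mathfrak{B}_2(w)$, and $\{w,v,w\}=\Phi(U_\unit(\overline{i}\cdot))$ should give $v^{*_w}=-v$. So $v=ik$ with $k$ self-adjoint in $\mathfrak{B}_2(w)$ and $k^2=w$, i.e. $k$ is a symmetry there. Next, $-\Phi(a)=\{v,\Phi(a),v\}=U^w_{v}(\Phi(a)^{*_w})=-U^w_k(\Phi(a))$, so $U^w_k$ fixes $\Phi(\mathfrak{A}_{sa})=\mathfrak{B}_2(w)_{sa}$. Writing $k=z-(w-z)$ with $z$ a projection, the same Peirce computation as at the end of Theorem~\ref{t FriedmanHakeda quadratic and additive on oc elements} shows $\{z,\cdot,w-z\}$ vanishes on $\mathfrak{B}_2(w)_{sa}$. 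Hence $z$ is central in $\mathfrak{B}_2(w)$ and $v=i(z-(w-z))$.

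For $(b)$, the central projection $z$ of $\mathfrak{B}_2(w)$ is self-adjoint there, so by bijectivity $z=\Phi(r)$ for a unique $r\in\mathfrak{A}_{sa}$. Since $\Phi|_{\mathfrak{A}_{sa}}$ is a Jordan isomorphism onto $\mathfrak{B}_2(w)_{sa}$, it preserves projections and centrality in both directions, so $r$ is a central projection. Linearity gives $\Phi(\unit-r)=w-z$, and the inclusions $\Phi(r\circ\mathfrak{A}_{sa})\subseteq z\circ_w\mathfrak{B}_2(w)_{sa}$ (and similarly for $\unit-r$) follow from multiplicativity. For $a\in U_r(\mathfrak{A}_{sa})$, the elements $i\unit$ and $a$ operator commute, and $ia=U_{i\unit}(\ldots)$ is not directly available. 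Instead I would use $U_{a_1}(i\unit)$-type identities. The key one is that for $a\ge0$ with $a=b^2$, $b$ commuting with everything in sight,
\[
\Phi(ia)=\Phi(U_b((i\unit)^*)^*)\ \text{-type manipulations}=\{\Phi(b),\overline{v},\Phi(b)\},
\]
and choosing the conjugations correctly gives $\Phi(-ia)=\{\Phi(b),v,\Phi(b)\}=-\Phi(b)\circ_w(\pm i)\ldots$. Splitting $a$ into positive and negative parts and using $z$-centrality, this evaluates to $\pm i\Phi(a)$ according to whether $a$ lives under $r$ or $\unit-r$. Tracking the conjugate-linear middle variable here is the main delicate bookkeeping step.

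For $(c)$, the extra hypothesis gives $\Phi(a+ib)=\Phi(a)+\Phi(ib)$. By $(b)$, $\Phi$ acts on $r\mathfrak{A}=U_r(\mathfrak{A})$ as the complex-linear extension of the Jordan isomorphism $\Phi|_{U_r(\mathfrak{A}_{sa})}$. On $U_{\unit-r}(\mathfrak{A})$ it acts as the conjugate-linear extension. Both extensions are Jordan $^*$-maps onto $U^w_z(\mathfrak{B}_2(w))$ and $U^w_{w-z}(\mathfrak{B}_2(w))$ respectively, which are weak$^*$-closed ideals. This yields the stated direct-sum decomposition and real linearity.

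I expect the main obstacle to be $(b)$: deriving $\Phi(ia)=\pm i\Phi(a)$ from quadratic identities with central $i\unit$ alone, without any additivity on non-self-adjoint elements. The type $I_2$ exclusion is needed only through the linearity step.
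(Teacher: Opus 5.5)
The genuine gap is the step you yourself flag in $(b)$. You treat only $a\geq 0$ written as $b^2$ with $b$ self-adjoint, and even that computation is only sketched. You then propose to reach a general $a\in U_r(\mathfrak{A}_{sa})$ by splitting $a=a^+-a^-$. That requires $\Phi(ia^+-ia^-)=\Phi(ia^+)+\Phi(-ia^-)$, i.e. additivity of $\Phi$ on skew-adjoint elements. This is not a hypothesis of $(b)$: additivity is assumed only on $\mathfrak{A}_{sa}$, and even the extra assumption in $(c)$ does not supply it.

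The missing idea, which is how the paper proceeds, is to allow a \emph{non-self-adjoint} square root and never compute $\Phi$ of it. Every $a\in U_r(\mathfrak{A}_{sa})$ equals $c^2$ for some $c\in U_r(\mathfrak{A})$, e.g. $c=(a^+)^{1/2}+i(a^-)^{1/2}$. Such a $c$ operator commutes with the central elements $i\unit$, $r$ and $\unit-r$. Using $\Phi(-i\unit)=v^{*_w}=-v=-i\bigl(\Phi(r)-\Phi(\unit-r)\bigr)$ and the conjugate-linearity of the middle variable, you get
\begin{align*}
\Phi(ia)&=\Phi(\{c,-i\unit,c\})=\{\Phi(c),\Phi(-i\unit),\Phi(c)\}=i\{\Phi(c),\Phi(r),\Phi(c)\}-i\{\Phi(c),\Phi(\unit-r),\Phi(c)\}\\
&=i\,\Phi(U_c(r))-i\,\Phi(U_c(\unit-r))=i\,\Phi(c^2)-i\,\Phi(0)=i\,\Phi(a).
\end{align*}
Symmetrically, $\Phi(ib)=-i\Phi(b)$ on $U_{\unit-r}(\mathfrak{A}_{sa})$.

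The same missing ingredient affects $(c)$. The hypothesis $\Phi(a+ib)=\Phi(a)+\Phi(ib)$ does not split $\Phi(ib)$ into its $r$- and $(\unit-r)$-parts for a general $b\in\mathfrak{A}_{sa}$. So you have not shown $\Phi(x)=\Phi(r\circ x)+\Phi((\unit-r)\circ x)$ for arbitrary $x\in\mathfrak{A}$. The paper gets this from the quadratic identity with $r$ in the outer slots:
\begin{itemize}
\item $\{z,\Phi(x),z\}=\Phi(\{r,x,r\})$, combined with $\Phi(x^*)=\Phi(x)^{*_w}$, gives $U^w_z(\Phi(x))=\Phi(r\circ x)$;
\item likewise $U^w_{w-z}(\Phi(x))=\Phi((\unit-r)\circ x)$;
\item $\Phi(x)=U^w_z(\Phi(x))+U^w_{w-z}(\Phi(x))$ because $z$ is central in $\mathfrak{B}_2(w)$.
\end{itemize}
Alternatively, the display above with an arbitrary $c\in\mathfrak{A}$ gives $\Phi(ib)=i\Phi(r\circ b)-i\Phi((\unit-r)\circ b)$ for every $b\in\mathfrak{A}_{sa}$.

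Otherwise your $(a)$ and the construction of $r$ in $(b)$ follow the paper. This includes the Peirce computation $\{z,b,w-z\}=0$, and your use of Theorem~\ref{t FriedmanHakeda quadratic and additive on oc elements} is legitimate; the paper needs it too. One small repair is needed in $(a)$. The identity $\{w,v,w\}=\Phi(\{\unit,i\unit,\unit\})$ only gives $v^{*_w}=\Phi(-i\unit)$, and $\Phi(-i\unit)=-\Phi(i\unit)$ is not available. Argue instead that $v\circ_w v=-w$ makes $v$ invertible in $\mathfrak{B}_2(w)$ with $v^{-1}=-v$. Then $U^w_v(v^{*_w})=v=U^w_v(v^{-1})$ and the invertibility of $U^w_v$ yield $v^{*_w}=-v$.
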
 

\begin{proof} $(a)$ It follows from Lemma~\ref{l piecewise +quadratic maps send unitaries to unitaries}$(a)$ that $\Phi (i \unit)$ is a tripotent in $\mathfrak{B}_2 (w)$, where, as in the previous lemma, we write $w$ for the tripotent $\Phi (\unit)\in \mathfrak{B}$. The $U$ operators corresponding the JBW$^*$-algebra $\mathfrak{B}_2 (w)$ will be denoted by $U^{w}$. Since $i\unit$ and $\unit$ operator commute, we have $$\Phi (-i \unit) = \Phi (\{\unit, i \unit, \unit\}) = \{w,\Phi(i \unit), w\} = \Phi(i \unit)^{*_w}.$$ We can therefore write $\Phi (i \unit) = i (z-q)$, where $z$ and $q$ are two orthogonal projections in $\mathfrak{B}_2 (w)$. Furthermore, $$\begin{aligned}
		-\Phi (\unit)= \Phi (-\unit) &= \Phi (\{i\unit,\unit, i \unit\}) = \{\Phi(i \unit), w, \Phi(i \unit)\} \\ &= \Phi(i \unit)\circ_{w} \Phi(i \unit) = - (z+q),
	\end{aligned}$$ which assures that $q = w-z = \Phi(\unit)-z$. It follows from the assumptions that for each $b\in (\mathfrak{B}_2 (w))_{sa}$ there exists $a\in \mathfrak{A}_{sa}$ such that $\Phi (a) =b$. Since $a$ and $i\unit$ operator commute, having in mind  Lemma~\ref{l piecewise +quadratic maps send unitaries to unitaries}$(d)$, we have $$\begin{aligned}
		- U^{w}_z (b) - U^{w}_q (b)- 2 \{z, b, q\} &= - b=-\Phi (a) = \Phi \left(\{i \unit, a, i \unit\} \right)\\ &= \{i (z-q), \Phi (a), i (z-q)\} \\ &= - U^{w}_z (b) - U^{w}_q (b)+ 2 \{z, b, q\},
	\end{aligned} $$ and so $\{z, b, q\} = 0$, for all $b \in (\mathfrak{B}_2 (w))_{sa}$, and thus $z$ is a central projection in $\mathfrak{B}_2 (w)$.\smallskip
	
$(b)$ Assume now that $\Phi|_{\mathfrak{A}_{sa}} : {\mathfrak{A}_{sa}}\to (\mathfrak{B}_2 (w))_{sa}$ is a bijection. So, we can pick $r\in \mathfrak{A}_{sa}$ satisfying $\Phi (r) = z$. By hypotheses, $\Phi (r^2) = \Phi \left(\{r, \unit, r\}  \right) = \{z,w,z\} = z = \Phi (r),$ which proves that $r^2 = r$ is a projection. We deduce from  Lemma~\ref{l piecewise +quadratic maps send unitaries to unitaries}$(d)$ that $\Phi (\unit -r ) = \Phi (\unit)-\Phi(r) = w-z$.\smallskip
	
	 To see that $r$ is central, we recall that a projection $p$ in a JBW$^*$-algebra $\mathfrak{M}$ is central if, and only if, $p\circ a = U_p (a)$ for all $a\in \mathfrak{M}_{sa}$ (cf. \cite[Lemma 2.5.5]{HOS}). Note that $z$ is central in $\mathfrak{B}_2 (w)$, which can be applied in the identity  $$\begin{aligned}
		\Phi \left( U_r (a) \right) &= \{\Phi(r),\Phi(a),\Phi(r)\} =U^{w}_{z} (\Phi(a)) = z\circ_w \Phi(a) \\
		&= \Phi (r) \circ_w \Phi(a) = \hbox{(by  Lemma~\ref{l piecewise +quadratic maps send unitaries to unitaries}$(d)$)} = \Phi (r\circ a),
	\end{aligned}$$ where $a$ is an arbitrary element in $\mathfrak{A}_{sa}$. The injectivity of $\Phi$ assures that $ U_r (a) = r\circ a$ for all $a\in \mathfrak{A}_{sa}$, and thus $r$ is central. We have also shown that $\Phi \left( U_r (a) \right)\in U^w_{z} \left( \mathfrak{B}_2 (\Phi(\unit)) \right)$.\smallskip
	
	It follows from the conclusions above that $$\Phi (-i \unit) = \Phi(\{\unit, i \unit, \unit\}) = \Phi (i\unit)^{*_w} = -\Phi (i \unit).$$  
	
	To see the final statement, let us take $c\in U_{r} (\mathfrak{A})$ and observe that since $c$ and $i \unit$ operator commute it follows that  
	$$\begin{aligned}
		\Phi (i c^2) &= \Phi(\{c, - i \unit, c  \})= \{\Phi (c), \Phi(- i \unit),\Phi (c) \} = \{\Phi (c), -i (z-(w-z)),\Phi (c) \}  \\ 
		&= i \{\Phi (c), \Phi (r) ,\Phi (c) \} -i \{\Phi (c), \Phi (\unit-r),\Phi (c) \}  \\
		& = i \Phi ( \{c, r ,c\}) -i \Phi ( \{c, \unit-r, c \} = i  \Phi (c^2).
	\end{aligned} $$ The arbitrariness of $c\in U_{r} (\mathfrak{A})$ allows us to deduce that $\Phi (i a) = i \Phi (a),$ for all  $a\in U_{r} (\mathfrak{A}_{sa})$. We can similarly get $\Phi (i a) = -i \Phi (a)$ for all  $a\in U_{\unit-r} (\mathfrak{A}_{sa}).$\smallskip

$(c)$ It follows from Theorem~\ref{t FriedmanHakeda quadratic and additive on oc elements} that the mapping  $\Phi|_{\mathfrak{A}_{sa}} : {\mathfrak{A}_{sa}}\to (\mathfrak{B}_2 (\Phi(\unit)))_{sa}$ is a Jordan isomorphism. The additional assumptions together with $(a)$ and $(b)$ guarantee that the mapping $\Phi_1 =\Phi|_{r\circ \mathfrak{A}} : {r\circ \mathfrak{A}}\to {z\circ_w  \mathfrak{B}_2 (\Phi(\unit))}$ is a well-defined complex linear Jordan $^*$-isomorphism, and similarly, $\Phi_2= \Phi|_{(\unit-r)\circ \mathfrak{A}} : {(\unit-r)\circ \mathfrak{A}}\to {(w-z)\circ_w  \mathfrak{B}_2 (\Phi(\unit))}$ is a well-defined conjugate-linear Jordan $^*$-isomorphism. Recall that $w= \Phi (\unit)$. Each $x\in \mathfrak{A}$ operator commutes with $r$ and $\unit-r$, and thus 
$$U_{z} (\Phi (x))^{*_w}  =\{\Phi(r), \Phi (x), \Phi (r)\} = \Phi\{r,x,r\} = \Phi \left(r\circ x^*\right) =\Phi_1 (r\circ x)^{*_w},$$ and $ U_{w-z} (\Phi (x)) = \Phi_2 ((\unit-r)\circ x)$. Finally, $$\Phi (x) = U_{z} (\Phi (x)) + U_{w-z} (\Phi (x)) = \Phi_1 (r\circ x) +  \Phi_2 ((\unit-r)\circ x),$$ witnessing that $\Phi$ is the direct sum of linear Jordan $^*$-isomorphism and a conjugate-linear Jordan $^*$-isomorphism.
\end{proof}

We conclude with the following corollary.

\begin{corollary}\label{cor invariant for JBWstar algebras} Let $\mathfrak{A}$ and $\mathfrak{B}$ be JBW$^*$-algebras such that $\mathfrak{A}$ admits no direct summands of type $I_2$. Then the following statements are equivalent: \begin{enumerate}[$(a)$]
\item $\mathfrak{A}$ and $\mathfrak{B}$ are Jordan $^*$-isomorphic. 
\item There exists a bijection $\Phi : \mathfrak{A}_{sa}\to \mathfrak{B}_{sa}$ which is additive and a quadratic mapping on operator commuting elements.
\end{enumerate}
\end{corollary}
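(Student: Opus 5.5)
The implication $(a)\Rightarrow(b)$ is immediate. If $\theta:\mathfrak{A}\to\mathfrak{B}$ is a Jordan $^*$-isomorphism, then $\Phi:=\theta|_{\mathfrak{A}_{sa}}$ maps $\mathfrak{A}_{sa}$ bijectively onto $\mathfrak{B}_{sa}$. It is real linear, so in particular additive on operator commuting elements. Since $\theta$ preserves the Jordan product, it preserves the quadratic operators: $\Phi(U_a(b)) = U_{\Phi(a)}(\Phi(b))$ for all $a,b\in\mathfrak{A}_{sa}$, and a fortiori for operator commuting pairs. No hypothesis on type $I_2$ summands is needed for this direction.

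For $(b)\Rightarrow(a)$ I would apply Theorem~\ref{t FriedmanHakeda quadratic and additive on oc elements} to the given bijection $\Phi$. It yields that $w:=\Phi(\unit)$ is a central symmetry in $\mathfrak{B}$, and that $\Phi$ is a (real linear) Jordan isomorphism from $\mathfrak{A}_{sa}$ onto $\mathfrak{B}_2(w)_{sa}$, where $\mathfrak{B}_2(w)$ carries the product $\circ_w$ and involution $*_w$. Because $w$ is a symmetry, $w$ is a unitary and $\mathfrak{B}_2(w)=\mathfrak{B}$. The theorem's proof also shows $\mathfrak{B}_{sa}=\Phi(\mathfrak{A}_{sa})\subseteq \mathfrak{B}_2(w)_{sa}\cap\mathfrak{B}_{sa}$, so that $\mathfrak{B}_2(w)_{sa}=\mathfrak{B}_{sa}$.

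It remains to identify $(\mathfrak{B},\circ_w,*_w)$ with $(\mathfrak{B},\circ,*)$ up to Jordan $^*$-isomorphism; this is the only step needing care. Since $w$ is a central symmetry, write $w=p-q$ with $p,q$ central orthogonal projections and $p+q=\unit$. The map $T(x)=w\circ x$ is linear, satisfies $T^2=\mathrm{Id}$, and sends $\unit$ to $w$. I expect, using centrality of $w$ and the formula $a\circ_w b=\{a,w,b\}=(a\circ w)\circ b+(b\circ w)\circ a-(a\circ b)\circ w$, that $a\circ_w b = w\circ(a\circ b)$ and $x^{*_w}=\{w,x,w\}=x^*$. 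Hence $T(a)\circ_w T(b) = w\circ((w\circ a)\circ(w\circ b)) = w\circ(a\circ b)=T(a\circ b)$, using $w^2=\unit$ and operator commutativity of the central element $w$. Checking $T(x^*)=T(x)^{*_w}$ is similarly routine, so $T$ is a Jordan $^*$-isomorphism from $(\mathfrak{B},\circ,*)$ onto $(\mathfrak{B},\circ_w,*_w)$. All these identities reduce to computations in the Peirce decomposition relative to the central projections $p,q$, on whose summands $w=\pm\unit$.

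Finally, $\theta_0:=T^{-1}\circ\Phi:\mathfrak{A}_{sa}\to\mathfrak{B}_{sa}$ is a real linear Jordan isomorphism of JB-algebras. Its complex-linear extension $\theta(a+ib)=\theta_0(a)+i\theta_0(b)$ is a Jordan $^*$-isomorphism from $\mathfrak{A}$ onto $\mathfrak{B}$, which gives $(a)$.
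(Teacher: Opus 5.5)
Your proof is correct and follows the route the paper intends. The paper gives no separate proof: $(a)\Rightarrow(b)$ is trivial, and $(b)\Rightarrow(a)$ is read off from Theorem~\ref{t FriedmanHakeda quadratic and additive on oc elements}. The paper leaves implicit the step that, for the central symmetry $w=\Phi(\unit)$, the algebra $(\mathfrak{B},\circ_w,*_w)$ is Jordan $^*$-isomorphic to $\mathfrak{B}$; you supply it correctly via $x\mapsto w\circ x$, using $a\circ_w b=w\circ(a\circ b)$ and $x^{*_w}=x^*$.
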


\medskip\medskip

\textbf{Acknowledgements}\medskip

G.M. Escolano, A.M. Peralta and A.R. Villena were partially supported by
MCIN/AEI/10.13039/501100011033 and by ``ERDF A way of making Europe'' grant PID2021-122126NB-C31, by the IMAG--Mar{\'i}a de Maeztu grant CEX2020-001105-M/AEI/10.13039/ 501100 011033, and by Junta de Andalucía­ grants no. FQM185, and FQM375.
First author supported by grant FPU21/00617 at University of Granada founded by Ministerio de Universidades (Spain).

\smallskip\smallskip

\noindent\textbf{Data Availability} Statement Data sharing is not applicable to this article as no datasets were generated or analyzed during the preparation of the paper.\smallskip\smallskip

\noindent\textbf{Declarations}
\smallskip\smallskip

\noindent\textbf{Conflict of interest} The authors declare that he has no conflict of interest.


\begin{thebibliography}{0}
	
\bibitem{AlfsenShultz2003} E. M. Alfsen and F.W. Shultz, \emph{Geometry of state spaces of operator algebras}, Mathematics: Theory \& Applications, Birkh\"{a}user Boston, Inc., Boston, MA, 2003.

\bibitem{braun1978} R. Braun, W. Kaup, H. Upmeier, A holomorphic characterization of {J}ordan {C}$^*$-algebras, \emph{Mathematische Zeitschrift} \textbf{161} (1978), 3, 277--290.

\bibitem{Cabrera} M. Cabrera, A. Rodr\'{i}guez~Palacios,  \emph{Nonassociative Normed Algebras. Vol.1. The Vidav-Palmer and Gelfand-Naimark theorems}. Encyclopedia Math. Appl., 154
Cambridge University Press, Cambridge, 2014.

\bibitem{Cabrera-Rodriguez-vol2} M. Cabrera, A. Rodr\'{i}guez~Palacios, \emph{Non-associative normed algebras. {V}ol. 2. Representation theory and the Zel'manov approach.}, vol.~167 of {\em Encyclopedia Math. Appl.} 167. Cambridge University Press, Cambridge, 2018.

\bibitem{CuetoPeralta2023} M. Cueto-Avellaneda, A. M. Peralta, \emph{Can one identify two unital JB*-algebras by the metric spaces determined by their sets of unitaries?}, \emph{Linear Multilinear Algebra} \textbf{70} (2022), no. 22, 7702--7727.

\bibitem{Dye} H.A. Dye: On the geometry of projections in certain operator algebras, \emph{Ann. Math.} \textbf{61} (1955), (2) 73--89. 

\bibitem{Edwards80} C.M. Edwards, On Jordan W$^*$-algebras, \emph{Bull. Sci. Math.} \textbf{104} (1980), 393--403.

\bibitem{EscolanoPeraltaVillena2025_OpCommut} G. M. Escolano, A. M. Peralta, A. R. Villena, \emph{Preservers of Operator Commutativity}, J. Math. Anal. Appl. \textbf{552} (2025), no. 2, Paper No. 129796, 64 pp. 

\bibitem{EscolanoPeraltaVillena2025_mes} G. M. Escolano, A. M. Peralta, A. R. Villena, \emph{The Mackey--Gleason--Bunce--Wright problem for vector-valued measures on projections in a JBW$^*$-algebra}, arXiv:2509.03213 

\bibitem{FriedmanBook} Y. Friedman, \emph{Physical applications of homogeneous balls}. With the assistance of Tzvi Scarr. Progress in Mathematical Physics 40. Boston, MA: Birkh\"{a}user, 2005.

\bibitem{FriHak88} Y. Friedman, J. Hakeda, Additivity of quadratic maps, \emph{Publ. Res. Inst. Math. Sci.} \textbf{24} (1988), No. 5, 707--722.

\bibitem{GarPe2021} J.J. Garcés, A.M. Peralta, One-parameter semigroups of orthogonality preservers on JB$^*$-algebras, \emph{Adv Oper Theory} \textbf{6} (2021): Paper No. 43.

\bibitem{Hak86} J. Hakeda, J., Additivity of $^*$-semigroup isomorphisms among $^*$-algebra, \emph{Bull. London Math. Soc.} \textbf{18} (1986), 51--56.

\bibitem{HakSa86} J. Hakeda, K. Saito, Additivity of Jordan $^*$-maps between operator algebras, \emph{Math. Soc. Japan} \textbf{38} (1986), 403--408.

\bibitem{HOS} H. Hanche-Olsen, E. St{\o}rmer, \emph{Jordan Operator Algebras}, Pitman, London, 1984.


\bibitem{Hamhalter2018} J. Hamhalter , \emph{Piecewise $^*$-homomorphisms and Jordan maps on C$^*$-algebras and factor von Neumann algebras}, \emph{J. Math. Anal. Appl.} \textbf{462} (2018), no. 1, 1014--1031.


\bibitem{Hamhalter2023} J. Hamhalter, \emph{Maps preserving products of commuting elements in von Neumann algebras}, \emph{J. Math. Anal. Appl.} \textbf{523} (2023), no.2, Paper No. 127044.


\bibitem{Hamhalter2025} J. Hamhalter , \emph{Morphisms of positive cones of JBW-algebras and von Neumann algebras that preserve products of operator commuting elements}, preprint 2025.

\bibitem{Harris1974} L.A. Harris, Bounded symmetric homogeneous domains in infinite dimensional spaces. In \emph{Proc. infinite dim. Holomorphy, Lexington 1973}, Lect. Notes Math. \textbf{364} (1974), 13-40.

\bibitem{Harris1981} L.A. Harris, A generalizarion of C$^*$-algebras, \emph{Proc. London Math. Soc.} \textbf{42} (1981), 331--361.

\bibitem{HervIsi1992} F.J. Herv{\'e}s, J.M. Isidro, Isometries and automorphisms of the spaces of spinors, \emph{Rev. Mat. Univ. Complutense Madrid} \textbf{5} (1992), No. 2-3, 193--200.

\bibitem{HeunenReyes2014} C. Heunen, M.L. Reyes,  \emph{Active lattices determine AW$^*$-algebras}, \emph{J. Math. Anal. Appl.} \textbf{416} (2014), no. 1, 289--313.

\bibitem{HewRossBookVol1} E. Hewitt, K.A. Ross, \emph{Abstract harmonic analysis. Volume I: Structure of topological groups, integration theory, group representations.} 2nd ed. Grundlehren der Mathematischen Wissenschaften. 115. Berlin: Springer- Verlag, 1994.

\bibitem{KadRingVolI} R.V. Kadison, J.R. Ringrose, Fundamentals of the theory of operator algebras. Vol. I. New York
(NY): Academic Press Inc. 1983.

\bibitem{Kaup83} W. Kaup, A Riemann mapping theorem for bounded symmetric domains in complex Banach spaces, \emph{Math. Z.} \textbf{183} (1983), 503--529.

\bibitem{KoSpeck1967} S. Kochen, E.P. Specker, The problem of hidden variables in quantum mechanics, \emph{J. Math. Mech.} \textbf{17} (1967), 59-87.

\bibitem{Molnar2015} L. Moln\'ar,  \emph{General Mazur-Ulam type theorems and some applications}. In \emph{Operator semigroups meet complex analysis, harmonic analysis and mathematical physics} (Arendt, Wolfgang ed. et al.). Proceedings of the conference, Herrnhut, Germany, June 3?7, 2013. Operator Theory: Advances and Applications 250, 311-342 (2015). 

\bibitem{Ozawa2011} M. Ozawa, Universal uncertainty principle, simultaneous measurability, and weak values. In Ralph, Timothy (ed.) et al., Quantum communication, measurement and computing (QCMC). The tenth international conference, Brisbane, Australia, July 19?23, 2010. Melville, NY: American Institute of Physics (AIP). AIP Conference Proceedings 1363, 53-62 (2011).

\bibitem{Topping65} D.M. Topping,  \emph{Jordan algebras of self-adjoint operators}, Mem. Am. Math. Soc, \textbf{53}, 1965.

\bibitem{vandeWet2020} J. van de Wetering, \emph{Commutativity in Jordan operator algebras}, J. Pure Appl. Algebra \textbf{224} (2020), no. 11, 106407.

\bibitem{Wright1977} J.D.M. Wright, Jordan C$^*$ algebras, \emph{Michigan Math J.} \textbf{24} (1977), 291--302.


\end{thebibliography}
\end{document}